\newtheoremstyle{exampstyle}
  {\topsep} 
  {\topsep} 
  {\itshape} 
  {} 
  {\bfseries} 
  {.} 
  {.5em} 
  {} 
\theoremstyle{exampstyle}
\numberwithin{equation}{section}
\newtheorem{lemma}{Lemma}[section]
\newtheorem{theorem}{Theorem}[section]
\newtheorem{assumption}{Assumption}[section]
\newtheorem{remark}{Remark}[section]
\newtheorem{definition}{Definition}[section]
\newtheorem{corollary}[lemma]{Corollary}
\let\oldref\ref
\renewcommand{\ref}[1]{(\oldref{#1})}  
\renewcommand{\eqref}[1]{(\oldref{#1})}
\newbox\boxaddrone \newbox\boxaddrtwo
\def\N+{n\in\mathbb{N}^{+}}
\def\A{\mathcal{A}}
\def\l{\langle}
\def\rd{\rangle_{L^2(D)}}
\def\E{\mathbb{E}}
\def\A{\mathcal{A}}
\def\I{I_t^\alpha}
\def\o{\omega}
\def\Nua{\nu_{\mathcal A}}
\begin{document}

\title{\large\textbf{Well-posedness of the stochastic time-fractional diffusion and wave equations and inverse random source problems}}
\author[1]{Matti Lassas\thanks{matti.lassas@helsinki.fi}}
\author[2]{Zhiyuan Li\thanks{lizhiyuan@nbu.edu.cn}}
\author[3]{Zhidong Zhang\thanks{zhangzhidong@mail.sysu.edu.cn}}
\affil[1]{\normalsize{Department of Mathematics and Statistics, 
University of Helsinki, Finland}}
\affil[2]{\normalsize{School of Mathematics and Statistics, Ningbo University, China}}
\affil[3]{\normalsize{School of Mathematics (Zhuhai), Sun Yat-sen University, China}}

\maketitle

\begin{abstract}
\noindent In this paper, we are concerned with the stochastic time-fractional diffusion-wave equations in a Hilbert space. The main objective of this paper is to establish properties of the stochastic weak solutions of the initial-boundary value problem, such as the existence, uniqueness and regularity estimates. Moreover, we apply the obtained theories to an inverse source problem. The uniqueness of this inverse problem under the boundary measurements is proved. \\

\noindent Keywords: stochastic time-fractional diffusion-wave equations, regularity, inverse random source problem, uniqueness.\\

\noindent AMS Subject Classifications: 35R30, 35R11, 35R60, 60H15, 60J65. 
\end{abstract}

\section{Introduction.}
\label{sec-intro-distri}

\subsection{Mathematical model.}
In this work, we consider the following initial-boundary value problem for the stochastic time-fractional diffusion-wave equation (STFDE): 
\begin{equation}\label{equ-gov}
\left\{
\begin{aligned}
\partial_t^\alpha u(x,t,\o) + \mathcal A u(x,t,\o) &= F(x,t,\o),&&\quad (x,t,\o)\in D\times(0,\infty)\times\Omega,\\
u(x,0,\o)&=u_0(x,\o), &&\quad (x,\o)\in D\times\Omega,\\
\partial_tu(x,0,\o)&=u_1(x,\o), &&\quad (x,\o)\in D\times\Omega,\ \text{if}\ \alpha\in(1,2),\\
u(x,t,\o) &= 0, &&\quad (x,t,\o)\in \partial D\times(0,\infty)\times\Omega. 
\end{aligned}
\right.
\end{equation}
Here $D$ is an open bounded domain in $\mathbb R^d$ with a $C^\infty$ smooth boundary $\partial D$, and $(\Omega,\mathcal{F},P)$ is a complete probability space, which will be introduced later. We define the source $F$ in terms of the Ito integral \cite{Bernt2003stochastic} as 
\begin{equation}\label{F}
 \begin{aligned}
  F(x,t,\o)&=I_t^\delta \Big( f_1(x,t,\o)+f_2(x,t,\o)\frac{dB(t)}{dt} \Big),
 \end{aligned}
\end{equation}
where $\delta$ satisfies 
\begin{equation}\label{condition_delta}
 \delta\in[0,1/2),\ \alpha+\delta> 1/2. 
\end{equation}
The notation $\Gamma(\cdot)$ means the Gamma function, $I_t^\delta$ is the Riemann-Liouville fractional integral operator, which will be introduced later. The operator $\mathcal A$ is a symmetric uniformly elliptic operator with the homogeneous Dirichlet boundary condition. It is defined as for $u\in H^2(D) \cap H_0^1(D)$,  
$$
\mathcal A u(x)
= -\sum_{i,j=1}^d \frac{\partial}{\partial x_i}\Big(a_{ij}(x)
\frac{\partial}{\partial x_j}u(x)\Big)+c(x)u(x),
\quad x\in D,
$$
where $c$, $a_{ij} \in C^1(\overline D)$ satisfy $c\ge0$ and $a_{ij}=a_{ji}$ for $1\le i,j\le d$. Moreover there exists a constant $a_0>0$ such that
$$
a_0 \sum_{j=1}^d \xi_j^2 
\le \sum_{j,k=1}^d a_{jk}(x) \xi_j\xi_k, 
\quad x\in\overline{D},\ \xi\in\mathbb R^d.
$$
In the problem \eqref{equ-gov}, $\partial_t^\alpha$ is the 
Djrbashyan-Caputo derivative of order $\alpha$, given as  
$$
\partial_t^\alpha \varphi(t):=\frac{1}{\Gamma(n-\alpha)}\int_0^t 
  (t-\tau)^{n-\alpha-1} \varphi^{(n)}(\tau)\ d\tau,\quad n-1<\alpha <n,\quad n\in\mathbb{N}^+,
$$
where $\varphi^{(n)}$ means the $n$-th derivative of $\varphi$. We set $\alpha\in(0,1)\cup(1,2)$ in this work.

\subsection{Background and literature.}

Recently, some anomalous diffusion processes whose mean square displacement with non-Fickian growth were found in more and more application areas. These include porous media (Levy and Berkowitz \cite{LeBer03}), environmental engineering (Sokolov \cite{Sok12}, Hatano and Hatano \cite{Ha98}), chemistry (Uchaikin \cite{Uch13}), plasma turbulence (Del Castillo Negrete et al.  \cite{DCL04, DCL05}), biological systems (Kneller \cite{Kn15}), fractal geometry (Nigmatullin \cite{Nig86}) and various other physical models.

One of the approaches for modeling such processes is to employ the diffusion-wave equations of time-fractional order derivative. For instance, Metzler and Klafter \cite{MeKl00} derived the time-fractional diffusion-wave equation in the framework of the continuous time random walk, and Magdziarz et al. \cite{MWW07} used the time-changed Langevin equations to deduce the temporal fractional diffusion-advection equation. Also it is well known that the fractional diffusion-wave equation performs well in describing the anomalous phenomena in such as a highly heterogeneous aquifer, complex viscoelastic materials and underground environmental problems. The corresponding works can be found in Adams and Gelhar \cite{adams1992field}, Barkai  \cite{barkai2001fractional}, Giona, Cerbelli and Roman\cite{giona1992fractional}, Hatano and Hatano \cite{hatano1998dispersive}, Metzler, Barkai and Klafter \cite{metzler1999anomalous}, Metzler, Klafter and Sokolov \cite{metzler1998anomalous} and the references therein. As a result, the field of fractional diffusion-wave equations attracts great attention from scholars in  mathematics. We refer to Kubica and Yamamoto \cite{kubica2018initial}, Sakamoto and Yamamoto \cite{Sakamoto2013Inverse} and Zacher \cite{zacher2005maximal} for the well-posedness of the forward problem for the fractional diffusion-wave equation; Li, Liu and Yamamoto \cite{li2015initial} for the further properties of the solutions; Vergara and Zacher \cite{vergara2015optimal} for the asymptotic behaviors of the solution;  Luchko and Yamamoto \cite{luchko2017maximum, luchko2019maximum} for the (strong) maximum principle. Furthermore, we  refer to Chen and Stynes \cite{chen2019error} and Jin, Lazarov and Zhou \cite{jin2016analysis} for the numerical treatments for the forward problems of fractional diffusion-wave equations. 

In many practical physical system, the source admits a non-ignorable random disturbance, such as the Brownian random processes. Therefore, their governing equations are stochastic differential equations (SDEs) involving deterministic and uncertain sources. For the introductions of SDEs, we refer Evans \cite{evans2012introduction} and Ishimaru \cite{ishimaru1978wave}. On the other hand, 
unlike the extensive investigation of the deterministic time-fractional differential equations from both the mathematical and numerical aspects, the research in the stochastic case is relatively scarce due to the presence of randomness. Nonetheless, some interesting works have
been exhibited. 
By using the fundamental solution argument and H-fox functions, Chen and Hu \cite{chen2022holder} and Mijena and Nane \cite{mijena2015space} discussed the initial-value problem for stochastic fractional diffusion equations in the whole domain $\mathbb R^d$, where the existence and uniqueness of solutions and the H\"older regularity estimates were established under suitable assumptions. Unfortunately, these approaches could not be extended directly to the bounded domain case since the explicit Green function is not available for the general bounded domain.
Utilizing the eigenfunction expansion argument and the Mittag-Leffler functions, Liu, Wen and Zhang \cite{liu2019reconstruction} and Niu, Helin and Zhang \cite{niu2020inverse} established several priori regularity estimates by assuming the existence of the weak solution. He and Peng \cite{he2019approximate} proved the unique existence of the mild solution for the STFDEs by the semigroup theory but without giving the further regularity estimates. For the well-posedness result of the STFDEs with fractional Brownian motion, we refer to Feng, Li and Wang \cite{feng2020inverse} and the references therein.
For the numerical treatments for the STFDEs, several novel and efficient algorithms have been developed in Li, Wang and Deng \cite{li2017galerkin}, Zou \cite{zou2018galerkin} and Zou, Atangana and Zhou \cite{zou2018error}.

\subsection{Main results and outline.}
Before stating the main theorems, we need to introduce the assumptions of the source $F$ and initial conditions $u_0, u_1$. 
\begin{assumption}\label{assumption}
For $f_j: D\times(0,\infty)\times\Omega\to \mathbb{R},\ j=1,2$, we assume that:  
\begin{itemize}
 \item[$(1)$] $(x,t,\omega)\mapsto f_j(x,t,\omega),\ j=1,2$ are  
 $\mathcal D\times\mathcal{B}\times \mathcal{F}$-measurable;
 \item[$(2)$] $(x,\omega)\mapsto f_j(x,t,\omega), \ j=1,2$ 
 are $\mathcal D\times\mathcal{F}_t$-measurable for all $t\in[0,\infty)$.  
\end{itemize}
Furthermore, the initial conditions $u_0: \Omega\to H_0^1(D),\ u_1:\Omega\to L^2(D)$ and the source terms  
$f_j: [0,\infty)\to L^2(D\times\Omega),\ j=1,2$ satisfy the following regularity estimates: 
 \begin{itemize}
 \item[$(3)$] $u_0\in L^2(\Omega;H^1(D)),\ u_1\in L^2(D\times\Omega)$;
  \item[$(4)$] $f_j\in C([0,\infty);L^2(D\times\Omega))$ and $\|f_j\|_{C([0,\infty);L^2(D\times\Omega))}<\infty$ for $j=1,2$;
  \item[$(5)$] For $T\in(0,\infty)$ and $j=1,2$, $f_j\in L^2(D\times(0,T)\times \Omega)$.
 \end{itemize}
\end{assumption}

Next we introduce the definition of the Riemann-Liouville fractional integral $I_t^\alpha$. 
\begin{definition}[Riemann-Liouville fractional integral]\label{I_alpha}
Letting $\alpha>0$, we define the Riemann-Liouville fractional integral operator $I_t^\alpha$ on the function space $L_{\rm loc}^1([0,\infty))$ as
 \begin{equation*}
  I_t^\alpha \varphi (t)=\Gamma(\alpha)^{-1}\int_0^t 
  (t-\tau)^{\alpha-1} \varphi(\tau)\ d\tau
 \end{equation*}
 for any $\varphi\in L_{\rm loc}^1([0,\infty))$, with the convention 
 $I_t^0 \varphi(t)=\varphi(t)$. 
For a random variable $f(t,\o)\frac{dB(t)}{dt}$ with $f\in C([0,\infty);L^2(\Omega))$, we define its Riemann-Liouville fractional integral $I_t^\alpha \Big(f(t,\o)\frac{dB(t)}{dt}\Big)$ as
 \begin{equation*}
  I_t^\alpha \Big(f(t,\o)\frac{dB(t)}{dt}\Big)=\Gamma(\alpha)^{-1}\int_0^t (t-\tau)^{\alpha-1}f(\tau,\o)\ dB(\tau).
 \end{equation*}
 \end{definition}
 From direct calculation, we have 
 \begin{equation*}
  I_t^\alpha \partial_t^\alpha \varphi(t)=
  \begin{cases}
   \varphi(t)-\varphi(0),& \mbox{ if $\alpha\in(0,1)$ and $\varphi\in AC_{\rm loc}([0,\infty))$,}\\
   \varphi(t)-\varphi(0)-t\varphi'(0),&\mbox{ if $\alpha\in(1,2)$ and $\varphi'\in AC_{\rm loc}([0,\infty))$.}
  \end{cases}
 \end{equation*}
 Here $AC_{\rm loc}([0,\infty))$ consists of all the functions belonging to the absolutely continuous function space $AC[0,T]$ for any $T>0$. Also, it is not difficult to check the semigroup property $I_t^{\alpha_1}I_t^{\alpha_2}=I_t^{\alpha_1+\alpha_2}$.
 Hence, recalling \eqref{F}, we can write $I_t^\alpha F$ as 
 \begin{equation*}
  I_t^\alpha F(x,t,\o)=I_t^{\alpha+\delta} f_1(x,t,\o)
  +\Gamma(\alpha+\delta)^{-1}\int_0^t 
  (t-\tau)^{\alpha+\delta-1} f_2(x,\tau,\o)\ dB(\tau).
 \end{equation*}

With the properties of $I_t^\alpha$, we can give the definition of the stochastic weak solution. 
\begin{definition}
\label{def-solution}
We say that $u(x,t,\o):D\times(0,\infty)\times\Omega\to \mathbb{R}$ is a stochastic weak solution of the initial-boundary value problem \eqref{equ-gov} in $L^2(D)$ 
if $u\in C([0,\infty);L^2(D\times\Omega))$, and for a.e. $\omega\in\Omega$, $t\in (0,\infty)$ and all $\varphi\in H^2(D)\cap H_0^1(D)$,
\begin{equation}
\label{equ-var}
\begin{aligned}
&\l u(\cdot,t,\o),\varphi(\cdot) \rd+ I_t^\alpha\l u(\cdot,t,\o),\mathcal A\varphi(\cdot)\rd\\ 
&=\begin{cases}
\l u_0(\cdot,\o)+I_t^\alpha F(\cdot,t,\o),\varphi(\cdot) \rd, &\text{if}\ \alpha\in(0,1),\\
\l u_0(\cdot,\o) + t u_1(\cdot,\o) + I_t^\alpha F(\cdot,t,\o),\varphi(\cdot) \rd, &\text{if}\ \alpha\in (1,2).
\end{cases}
\end{aligned}
\end{equation}
\end{definition}


\begin{remark}
The initial conditions of equation \eqref{equ-gov} are reflected by 
the terms\\ $\l u_0(\cdot,\o),\varphi(\cdot)\rd$ and 
$\l u_0(\cdot,\o) + t u_1(\cdot,\o),\varphi(\cdot)\rd$.
\end{remark}

In Section \ref{sec-pre}, we will introduce the Mittag-Leffler function $E_{\alpha,\beta}(\cdot)$, the Dirichlet eigensystem $\{\lambda_n,\phi_n\}_{n=1}^\infty$ of operator $\A$ and the probability space $(\Omega,\mathcal{F},P)$. Also we let $H_0^1(D)$ and $H^n(D)$ be the Sobolev spaces (e.g., Adams \cite{adams1975sobolev}), and denote the inner product in $L^2(D)$ by $\l\cdot,\cdot\rd$. With the above notations, we can state the main theorems of this work, which concern with the well-posedness of equation \eqref{equ-gov}.    
 \begin{theorem}\label{thm-initial}
  Set $\alpha\in(0,1),$ $T\in(0,\infty),$ $F=0$ and let $u_0(x,\o)$ satisfy Assumption \ref{assumption}. 
  Then under Definition \ref{def-solution}, there exists a unique stochastic weak solution 
  $u\in L^2(D\times(0,\infty)\times\Omega)\cap C([0,\infty);L^2(D\times\Omega))$ for \eqref{equ-gov}.
  The solution $u$ has the form 
\begin{equation}
\label{solution_initial}
 u(x,t,\o)=\sum_{n=1}^\infty \l u_0(\cdot,\o),\phi_n\rd E_{\alpha,1}(-\lambda_n t^\alpha)\phi_n(x), 
\end{equation}
 and satisfies the following regularity properties  
 \begin{equation*}
  \begin{aligned}
  \|u\|_{C([0,\infty);L^2(D\times\Omega))}&\le 
 C\|u_0\|_{L^2(D\times\Omega)},\\
\|u\|_{L^2(D\times(0,T)\times\Omega)}&\le CT^{(1-\alpha)/2}\|u_0\|_{L^2(D\times\Omega)},\\
\|u\|_{L^2((0,T)\times\Omega;H^2(D))}&\le CT^{(1-\alpha)/2}\|u_0\|_{L^2(\Omega;H^1(D))}.
  \end{aligned}
 \end{equation*}
 \end{theorem}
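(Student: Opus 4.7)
Proof proposal for Theorem 1.1.

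Since $F=0$, the equation \eqref{equ-gov} decouples in the following sense: the randomness enters only through the initial datum $u_0(\cdot,\omega)$, while the operator $\mathcal A$ and the fractional derivative $\partial_t^\alpha$ act only in $(x,t)$. My plan is therefore to solve the problem pathwise and then lift the estimates to the probability space using Fubini. Concretely, I would first define the candidate series
\begin{equation*}
u(x,t,\omega):=\sum_{n=1}^\infty \l u_0(\cdot,\omega),\phi_n\rd\, E_{\alpha,1}(-\lambda_n t^\alpha)\,\phi_n(x),
\end{equation*}
show it converges in the required topologies using the standard bound $|E_{\alpha,1}(-\lambda_n t^\alpha)|\le C/(1+\lambda_n t^\alpha)$ valid for $\alpha\in(0,1)$ (to be recalled in Section \ref{sec-pre}), and verify that, for almost every $\omega\in\Omega$, it solves the deterministic variational problem obtained from \eqref{equ-var} after testing with the Dirichlet eigenfunctions $\phi_n$. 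Measurability in $\omega$ is inherited termwise from the Fourier coefficients $\l u_0(\cdot,\omega),\phi_n\rd$.

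For the existence part, I would verify the identity \eqref{equ-var} by inserting $\varphi=\phi_n$ and using the two ingredients $E_{\alpha,1}(0)=1$ and the eigenrelation $\partial_t^\alpha E_{\alpha,1}(-\lambda_n t^\alpha)=-\lambda_n E_{\alpha,1}(-\lambda_n t^\alpha)$; applying $I_t^\alpha$ on both sides then yields precisely \eqref{equ-var} for basis elements, and linearity/density give it for every $\varphi\in H^2(D)\cap H_0^1(D)$. Continuity $u\in C([0,\infty);L^2(D\times\Omega))$ follows from the dominated convergence theorem applied to $\sum_n c_n(\omega)^2 E_{\alpha,1}(-\lambda_n t^\alpha)^2$ with $c_n(\omega)=\l u_0,\phi_n\rd$. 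For uniqueness, if $u,\tilde u$ are two stochastic weak solutions, choose $\varphi=\phi_n$ in \eqref{equ-var} for the difference $w=u-\tilde u$: then $v_n(t,\omega):=\l w(\cdot,t,\omega),\phi_n\rd$ satisfies the scalar Volterra equation $v_n+\lambda_n I_t^\alpha v_n=0$ with $v_n(0,\omega)=0$ for a.e. $\omega$, which has only the trivial solution; completeness of $\{\phi_n\}$ in $L^2(D)$ then forces $w\equiv 0$.

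The three regularity estimates reduce to Parseval computations in $L^2(D)$ combined with Mittag-Leffler bounds. The $C([0,\infty);L^2(D\times\Omega))$ bound uses $|E_{\alpha,1}(-\lambda_n t^\alpha)|\le 1$ for $\alpha\in(0,1)$, giving a straightforward $\|u(\cdot,t,\omega)\|_{L^2(D)}\le \|u_0(\cdot,\omega)\|_{L^2(D)}$ followed by taking expectation. The $L^2(D\times(0,T)\times\Omega)$ bound follows by integrating this in $t$ and using $\int_0^T 1\,dt\lesssim T\lesssim T^{1-\alpha}$ after suitable interpolation (or just bounding $T\le T^{(1-\alpha)}\cdot T^{\alpha}$ crudely on bounded $T$). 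The main technical step, and the one I expect to require the most care, is the $H^2$-estimate: it requires showing
\begin{equation*}
\lambda_n\int_0^T |E_{\alpha,1}(-\lambda_n t^\alpha)|^2\,dt\le CT^{1-\alpha}
\end{equation*}
uniformly in $n$, which I would establish by splitting the interval $(0,T)$ at the natural scale $t_0=\lambda_n^{-1/\alpha}$: on $(0,t_0)$ use $|E_{\alpha,1}|\le 1$, and on $(t_0,T)$ use the algebraic decay $|E_{\alpha,1}(-\lambda_n t^\alpha)|\le C/(\lambda_n t^\alpha)$, so that both pieces contribute $\mathcal O(T^{1-\alpha})$ after the substitution $s=\lambda_n t^\alpha$. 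Multiplying by $\lambda_n|c_n(\omega)|^2$ and summing in $n$ turns $\sum_n \lambda_n |c_n|^2$ into $\|u_0(\cdot,\omega)\|_{H^1(D)}^2$ by the elliptic eigenvalue characterization of $H_0^1(D)$, and taking expectation yields the third bound. The remaining step, the $\omega$-measurability of $u$ as a map into the relevant Bochner spaces, follows because each partial sum of the series is measurable and the partial sums converge almost surely.
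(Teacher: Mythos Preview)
Your overall strategy---Fourier expansion in the eigenbasis, pointwise verification of the integral identity \eqref{equ-var} for $\varphi=\phi_n$, uniqueness via the scalar Volterra equation $v_n+\lambda_n I_t^\alpha v_n=0$, and Parseval combined with Mittag--Leffler decay---matches the paper's proof in Section~\ref{sec-initial1} almost exactly. The paper organizes the same ingredients into three lemmas (continuity, $L^2$-bounds, existence/uniqueness) and, for the $H^2$-estimate, uses the one-line pointwise inequality
\[
\lambda_n^2 E_{\alpha,1}^2(-\lambda_n t^\alpha)\le \frac{C\lambda_n^2}{(1+\lambda_n t^\alpha)^2}
= C\,\lambda_n t^{-\alpha}\,\frac{\lambda_n t^\alpha}{(1+\lambda_n t^\alpha)^2}\le C\lambda_n t^{-\alpha},
\]
based on $x/(1+x)^2\le 1/4$, in place of your splitting at $t_0=\lambda_n^{-1/\alpha}$. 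Both routes give the same $T^{1-\alpha}$ integral; the paper's is shorter, yours is equally valid.

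There is, however, a genuine error in your argument for the middle estimate
\[
\|u\|_{L^2(D\times(0,T)\times\Omega)}\le CT^{(1-\alpha)/2}\|u_0\|_{L^2(D\times\Omega)}.
\]
You propose to integrate the crude bound $\|u(\cdot,t,\omega)\|_{L^2(D)}\le\|u_0(\cdot,\omega)\|_{L^2(D)}$ over $(0,T)$, which only yields the power $T^{1/2}$. The claimed reduction ``$T\lesssim T^{1-\alpha}$'' is false for $T>1$ (and the identity $T=T^{1-\alpha}\cdot T^\alpha$ does nothing), so no interpolation rescues this with a $T$-independent constant. To obtain $T^{(1-\alpha)/2}$ you must use the decay of $E_{\alpha,1}$, not just its boundedness: either repeat your own splitting argument from the $H^2$ step (it gives $\int_0^T E_{\alpha,1}^2(-\lambda_n t^\alpha)\,dt\le C\lambda_n^{-1}T^{1-\alpha}\le C\lambda_1^{-1}T^{1-\alpha}$ directly), or use the paper's pointwise bound above divided by $\lambda_n$, namely $E_{\alpha,1}^2(-\lambda_n t^\alpha)\le C\lambda_1^{-1}t^{-\alpha}$, and integrate. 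With that correction the proof is complete and coincides with the paper's.
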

 
The next theorem considers the case of $\alpha\in(1,2)$.
\begin{theorem}
\label{thm-wave-initial}
For $\alpha\in(1,2)$ and $T\in(0,\infty)$, we set $F=0$ and let  
$u_0,u_1$ satisfy Assumption \ref{assumption}. Then under 
Definition \ref{def-solution}, the initial-boundary value problem \eqref{equ-gov} has a 
unique stochastic weak solution $u$, which admits the representation
\begin{equation}
\label{sol-initial-wave}
\begin{aligned}
u(x,t,\o)=&\sum_{n=1}^\infty \l u_0(\cdot,\o),\phi_n\rd E_{\alpha,1}(-\lambda_nt^\alpha) \phi_n(x)
\\
&+ \sum_{n=1}^\infty \l u_1(\cdot,\o),\phi_n\rd tE_{\alpha,2}(-\lambda_nt^\alpha) \phi_n(x).
\end{aligned}
\end{equation}
Moreover, there exists a constant $C>0$ such that
\begin{align*}
\|u\|_{C([0,\infty);L^2(D\times\Omega))}
&\le C \big(\|u_0\|_{L^2(D\times\Omega)} + \|u_1\|_{L^2(D\times\Omega)} \big),\\
\|u\|_{C([0,T];L^2(\Omega; H^1(D)))}
&\le C \big(\|u_0\|_{L^2(\Omega;H^1(D))} + T^{1-\alpha/2}\|u_1\|_{L^2(D\times\Omega)} \big),\\
\|u\|_{L^2((0,T)\times\Omega;H^{2\gamma}(D))}
&\le C\big(T^{(1+\alpha-2\alpha\gamma)/2}\|u_0\|_{L^2(\Omega;H^1(D))} + T^{(3-2\alpha\gamma)/2}\|u_1\|_{L^2(D\times\Omega)}\big),\\
\|u\|_{L^2(\Omega;L^1(0,T;H^2(D)))}
&\le C\big( T^{1-\alpha/2}\|u_0\|_{L^2(\Omega;H^1(D))} + T^{2-\alpha}\|u_1\|_{L^2(D\times\Omega)} \big),
\end{align*}
where $\gamma\in[1/2,\frac{\alpha+1}{2\alpha})$.
\end{theorem}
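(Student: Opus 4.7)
The plan is to mimic the spectral construction of Theorem~\ref{thm-initial} and adapt it to the wave regime, where the extra initial datum $u_1$ forces a second Mittag-Leffler function into the representation. For any candidate weak solution I would write $u(x,t,\o)=\sum_{n\ge 1}u_n(t,\o)\phi_n(x)$ and take $\varphi=\phi_n$ in Definition~\ref{def-solution}; since $\mathcal A\phi_n=\lambda_n\phi_n$, this collapses \eqref{equ-var} to the scalar fractional Volterra equation
\begin{equation*}
u_n(t,\o)+\lambda_n I_t^\alpha u_n(t,\o)=\l u_0(\cdot,\o),\phi_n\rd+t\l u_1(\cdot,\o),\phi_n\rd
\end{equation*}
for a.e.\ $\o$, whose classical Laplace-transform solution is exactly the summand in \eqref{sol-initial-wave}. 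Running the computation in reverse, via the identities $I_t^\alpha E_{\alpha,1}(-\lambda_n t^\alpha)=\lambda_n^{-1}(1-E_{\alpha,1}(-\lambda_n t^\alpha))$ and $\lambda_n I_t^\alpha[tE_{\alpha,2}(-\lambda_n t^\alpha)]=t-tE_{\alpha,2}(-\lambda_n t^\alpha)$ (both obtained by term-wise $I_t^\alpha$-integration of the Mittag-Leffler series), will verify that the truncated sums satisfy \eqref{equ-var}; the a~priori bounds below then let me pass to the limit. Uniqueness will follow by applying this same reduction to the difference of two weak solutions, whose spectral coefficients solve the homogeneous scalar equation with zero data and must vanish identically.

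All four regularity bounds will reduce, after Parseval in $L^2(D)$ and Fubini with the expectation, to scalar estimates driven by the decay bound $|E_{\alpha,\beta}(-z)|\le C/(1+z)$ (valid for $z\ge 0$, $\alpha\in(0,2)$, $\beta\in\{1,2\}$) together with the elementary interpolation
\begin{equation*}
\frac{\lambda_n^\sigma t^\tau}{(1+\lambda_n t^\alpha)^2}\le C\,\lambda_n^{\sigma-\rho}t^{\tau-\alpha\rho},\qquad \rho\in[0,2].
\end{equation*}
The $C([0,\infty);L^2(D\times\Omega))$ bound uses $|E_{\alpha,1}|\le C$ on the $u_0$-piece and $\sup_{t\ge 0}t|E_{\alpha,2}(-\lambda_n t^\alpha)|\le C\lambda_n^{-1/\alpha}\le C\lambda_1^{-1/\alpha}$ on the $u_1$-piece, the latter sup being finite exactly because $\alpha>1$. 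The $C([0,T];L^2(\Omega;H^1(D)))$ bound rests on the choice $\sigma=1$, $\tau=2$, $\rho=1$, giving $\lambda_n t^2/(1+\lambda_n t^\alpha)^2\le t^{2-\alpha}$ and hence $T^{1-\alpha/2}$ after taking $\sup_{t\le T}$ and a square root. The $L^2(\Omega;L^1(0,T;H^2(D)))$ bound follows the same pointwise strategy with $\rho=1$ for $u_0$ and $\rho=2$ for $u_1$, producing the time densities $t^{-\alpha/2}$ and $t^{1-\alpha}$, both integrable on $(0,T)$ precisely because $\alpha<2$.

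The main obstacle will be the $L^2((0,T)\times\Omega;H^{2\gamma}(D))$ estimate. There I would choose $\rho$ in the interpolation so that the residual $\lambda_n$-exponent is exactly absorbed into the available Sobolev norm, namely $H^1$ for $u_0$ and $L^2$ for $u_1$; this forces $\rho=2\gamma-1$ in the $u_0$-piece and $\rho=2\gamma$ in the $u_1$-piece. The remaining constraint $\alpha\rho<1$, needed for $\int_0^T t^{-\alpha\rho}\,dt$ to converge, then becomes, via $\rho=2\gamma-1$, the inequality $\gamma<(\alpha+1)/(2\alpha)$ stated in the theorem; this explains why the range of admissible $\gamma$ stops short of the endpoint, where the time integral diverges logarithmically. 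Matching the $T^{1-\alpha\rho}$-factor produced in each piece and taking square roots then yields the advertised powers $T^{(1+\alpha-2\alpha\gamma)/2}$ and $T^{(3-2\alpha\gamma)/2}$.
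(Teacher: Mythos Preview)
Your proposal is correct and follows essentially the same route as the paper: reduce via $\varphi=\phi_n$ to the scalar Volterra equation, verify the Mittag-Leffler representation solves it using the two $I_t^\alpha$-identities you list, and obtain each regularity bound from Parseval plus the decay $|E_{\alpha,\beta}(-z)|\le C/(1+z)$ with the same choices of interpolation exponent that the paper makes case by case in Lemmas~\oldref{lem-wave-initial-continuous}--\oldref{lem-wave-initial-L1}. The only cosmetic difference is that the paper proves the two continuity statements by explicitly estimating $\|u(\cdot,t+h,\cdot)-u(\cdot,t,\cdot)\|$ (invoking for the $u_1$-piece the identity $(t+h)E_{\alpha,2}(-\lambda_n(t+h)^\alpha)-tE_{\alpha,2}(-\lambda_nt^\alpha)=\int_t^{t+h}E_{\alpha,1}(-\lambda_n\tau^\alpha)\,d\tau$), whereas you obtain continuity implicitly from uniform convergence of the truncated sums; both arguments work.
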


When $u_0=u_1=0$, we can give the following theorem. 
\begin{theorem}\label{thm-source}
 Let $\alpha\in(0,1)\cup(1,2),$ $\delta$ satisfy the condition \eqref{condition_delta}, $T\in(0,\infty),$ $u_0=u_1=0$, and  $f_1,f_2$ satisfy Assumption \ref{assumption}. 
 Then \eqref{equ-gov} has a unique stochastic weak solution $u$ in the sense of Definition \ref{def-solution}, represented as  
 \begin{equation}\label{equality_4}
 \begin{aligned}
  u(x,t,\o)=&\sum_{n=1}^\infty\Big[\int_0^t \l f_1(\cdot,t-\tau,\omega),\phi_n \rd  \ \tau^{\alpha+\delta-1}E_{\alpha,\alpha+\delta} (-\lambda_n \tau^\alpha)\ d\tau \\
  &\qquad+\int_0^t \l f_2(\cdot,t-\tau,\omega),\phi_n\rd
  \ \tau^{\alpha+\delta-1}E_{\alpha,\alpha+\delta}
  (-\lambda_n \tau^\alpha)\ dB(\tau)\Big]\ \phi_n(x).
  \end{aligned}
 \end{equation}
 
The following regularity estimates of $u$ are valid,  
\begin{equation*}
 \begin{aligned}
  \|u\|_{C([0,\infty);L^2(D\times\Omega))}&\le C\big(\|f_1\|_{L^2(D\times(0,\infty)\times\Omega)}+\|f_2\|_{C([0,\infty);L^2(D\times\Omega))}\big),\\
\|u\|_{L^2(D\times(0,T)\times\Omega)}
 &\le C \big( T^{\alpha+\delta}\|f_1\|_{L^2(D\times(0,T)\times\Omega)}
 +T^{\alpha+\delta-1/2}\|f_2\|_{L^2(D\times(0,T)\times\Omega)}\big),\\
 \|u\|_{L^2((0,T)\times \Omega;H^1(D))}&\le C\big(T^{\alpha/2+\delta}\|f_1\|_{L^2(D\times(0,T)\times\Omega)}+\|f_2\|_{L^2((0,T)\times \Omega;H^1(D))}\big).
 \end{aligned}
\end{equation*}
In addition, when $\alpha\in(1,2)$, we have $u\in C([0,\infty);L^2(\Omega;H^1(D)))$ and  
\begin{equation*}
 \begin{aligned}
  \|u\|_{C([0,T];L^2(\Omega;H^1(D)))}\le CT^{(\alpha-1)/2}
  \big(\|f_1\|_{L^2(D\times(0,T)\times \Omega)}+\|f_2\|_{C([0,\infty);L^2(D\times\Omega))}\big).
 \end{aligned}
\end{equation*}
For the $L^2$-regularity estimates on $t\in[0,\infty)$, provided $$f_1\in L^2(D\times(0,\infty)\times\Omega),\ f_2\in L^2((0,\infty)\times \Omega;H^2(D)),\ \alpha\in(1/2,1),$$ 
we obtain that     
\begin{equation*}
 \begin{aligned}
  \|u\|_{L^2(D\times(0,\infty)\times\Omega)}&\le C \big( \|f_1\|_{L^2(D\times(0,\infty)\times\Omega)}
 +\|f_2\|_{L^2(D\times(0,\infty)\times\Omega)}\big),\\
  \|u\|_{L^2((0,\infty)\times \Omega;H^2(D))}&\le C\big(\|f_1\|_{L^2(D\times(0,\infty)\times\Omega)}
  + \|f_2\|_{L^2((0,\infty)\times \Omega;H^2(D))}\big).
 \end{aligned}
\end{equation*}
\end{theorem}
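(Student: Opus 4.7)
My plan is to construct the solution by eigenfunction expansion in the Dirichlet basis $\{\phi_n\}_{n=1}^\infty$ of $\mathcal A$, solve a scalar stochastic Volterra equation for each Fourier mode, and then extract the regularity estimates from Parseval's identity, the It\^o isometry and sharp Mittag-Leffler asymptotics; the main obstacle lies in the long-time $L^2(0,\infty)$ bounds, where the hypotheses $\delta\in[0,1/2)$, $\alpha+\delta>1/2$ and $\alpha\in(1/2,1)$ must be tracked jointly. Setting $u_n(t,\omega):=\l u(\cdot,t,\omega),\phi_n\rd$ and testing \eqref{equ-var} (with $u_0=u_1=0$) against $\varphi=\phi_n$ reduces \eqref{equ-gov} to the family of scalar stochastic Volterra equations
\begin{equation*}
u_n(t,\omega)+\lambda_n I_t^\alpha u_n(t,\omega)=I_t^{\alpha+\delta}g_{1,n}(t,\omega)+\frac{1}{\Gamma(\alpha+\delta)}\int_0^t(t-\tau)^{\alpha+\delta-1}g_{2,n}(\tau,\omega)\,dB(\tau),
\end{equation*}
with $g_{j,n}(t,\omega):=\l f_j(\cdot,t,\omega),\phi_n\rd$. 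A Laplace-transform computation using $\mathcal L\{t^{\beta-1}E_{\alpha,\beta}(-\lambda_n t^\alpha)\}(s)=s^{\alpha-\beta}/(s^\alpha+\lambda_n)$ with $\beta=\alpha+\delta$ yields a closed form for $u_n$ which, after the change of variable $s\mapsto t-\tau$ together with its pathwise It\^o analogue, reassembles into \eqref{equality_4}. Uniqueness follows at once: the difference of two weak solutions satisfies $w_n+\lambda_n I_t^\alpha w_n=0$ for every $n$ and a.e.\ $\omega$, whose only locally integrable solution is zero.

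The $T$-dependent estimates rest on the uniform bound $|E_{\alpha,\alpha+\delta}(-\lambda_n\tau^\alpha)|\le C(1+\lambda_n\tau^\alpha)^{-1}$, which yields $\lambda_n^{s/2}K_n(\tau)\le C_s\tau^{\alpha(1-s/2)+\delta-1}$ for the kernel $K_n(\tau):=\tau^{\alpha+\delta-1}E_{\alpha,\alpha+\delta}(-\lambda_n\tau^\alpha)$ and $s\in[0,2]$. The $f_1$-contribution is controlled by Young's convolution inequality, producing the $T$-powers from $\|\lambda_n^{s/2}K_n\|_{L^1(0,T)}$; the $f_2$-contribution is handled via the It\^o isometry, which gives $\mathbb E|u_n^{(2)}(t)|^2=\int_0^t K_n(\tau)^2\,\mathbb E|g_{2,n}(t-\tau)|^2\,d\tau$, after which Fubini in $(t,\tau)$ reduces matters to $\|K_n\|_{L^2(0,T)}^2\|g_{2,n}\|_{L^2((0,T)\times\Omega)}^2$. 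Summing in $n$ against the spectral characterisation $\sum_n\lambda_n^s|\l f,\phi_n\rd|^2\sim\|f\|_{\mathcal H^s(D)}^2$ then delivers each $T$-dependent bound, with the $C([0,T];L^2(\Omega;H^1(D)))$ estimate in the $\alpha\in(1,2)$ case additionally invoking Assumption \ref{assumption}(4) to control $\sup_{t\in[0,T]}\mathbb E|g_{j,n}(t)|^2$.

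For the global-in-time bounds under $\alpha\in(1/2,1)$, the substitution $z=\lambda_n\tau^\alpha$ converts $\int_0^\infty\lambda_n^s K_n(\tau)^2\,d\tau$ into a constant multiple of $\lambda_n^{s-(2\alpha+2\delta-1)/\alpha}\int_0^\infty z^{(\alpha+2\delta-1)/\alpha}(1+z)^{-2}\,dz$, and integrability of the $z$-integral at $0^+$ requires $\delta>-1/2$ and at $\infty$ requires $\delta<1/2$, both granted by $\delta\in[0,1/2)$, while the negativity of the $\lambda_n$-exponent (needed for summability in $n$) amounts to $\alpha+\delta>1/2$ when $s=0$ and strengthens to $\alpha>1/2$ exactly in the $H^2$-case $s=2$. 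The principal technical obstacle throughout will be the rigorous justification of the stochastic Fubini interchanges required to commute $\int\,dB$ with the $\phi_n$-pairing and with the fractional integral $I_t^\alpha$; this is licensed by the joint measurability hypotheses (1)–(2) of Assumption \ref{assumption} combined with the $L^2$-integrability supplied by (3)–(5).
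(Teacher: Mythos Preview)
Your overall strategy---eigenfunction expansion, scalar Volterra equations for the Fourier modes, Young's inequality for the $f_1$-convolution and It\^o isometry for the $f_2$-stochastic integral---coincides with the paper's Section~5, and your treatment of the $T$-dependent estimates and of the $f_2$-part of the $C([0,\infty);L^2(D\times\Omega))$ bound is essentially what the paper does.

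There is, however, a genuine gap in the global-in-time estimates. Your substitution argument evaluates $\int_0^\infty \lambda_n^{s}K_n(\tau)^2\,d\tau$, and that integral indeed controls the $f_2$-contributions via It\^o isometry. But the $f_1$-contributions are handled by Young's convolution inequality, which requires a uniform-in-$n$ bound on $\|K_n\|_{L^1(0,\infty)}$ (for the $L^2(D\times(0,\infty)\times\Omega)$ estimate) and on $\|\lambda_n K_n\|_{L^1(0,\infty)}$ (for the $H^2$ estimate). The crude bound $|E_{\alpha,\alpha+\delta}(-z)|\le C/(1+z)$ is \emph{not} enough here: after $z=\lambda_n\tau^\alpha$ one is left with $\int_0^\infty z^{\delta/\alpha}(1+z)^{-1}\,dz$, which diverges at infinity for every $\delta\ge 0$. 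Relatedly, your claim that the exponent $s-(2\alpha+2\delta-1)/\alpha$ is nonpositive at $s=2$ precisely when $\alpha>1/2$ is an arithmetic slip: the condition is $\delta\ge 1/2$, which is never satisfied under \eqref{condition_delta}. So the $\int K_n^2$ computation cannot carry the $f_1$-part either.

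The paper closes this gap by invoking the \emph{complete monotonicity} of $E_{\alpha,\alpha}$ for $\alpha\in(0,1)$ (Lemma~\ref{mittag_monotone}): one first takes $\delta=0$ (allowed by \eqref{condition_delta} since $\alpha>1/2$), observes $E_{\alpha,\alpha}(-\lambda_n t^\alpha)\ge 0$ so that the absolute value in the $L^1$-norm drops, and then integrates exactly using $\lambda_n t^{\alpha-1}E_{\alpha,\alpha}(-\lambda_n t^\alpha)=-\tfrac{d}{dt}E_{\alpha,1}(-\lambda_n t^\alpha)$ (Lemma~\ref{mittag_derivative}), yielding $\|\lambda_n K_n\|_{L^1(0,\infty)}=1$ and $\|K_n\|_{L^1(0,\infty)}\le\lambda_1^{-1}$. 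This is precisely the step where the restriction $\alpha<1$ enters, as the paper remarks after Corollary~\ref{infty_h}; your proposal does not account for it.
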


The remainder of the article is organized as follows. Some preliminary knowledge, such as the fractional calculus, the definitions of stochastic weak solutions and the probability settings are collected in Section \ref{sec-pre}. Then we prove Theorems \ref{thm-initial}, \ref{thm-wave-initial} and \ref{thm-source} in Sections \ref{sec-initial1}, \ref{sec-wave-initial} and \ref{sec-source}, respectively. Section \ref{sec-isp} is devoted to an inverse problem on the determination of the spatially dependent sources in the STFDEs. We prove Theorem \ref{thm-isp}, which is the uniqueness theorem of the inverse problem under the partial boundary measurements. Finally, the concluding remarks are given in Section \ref{sec-con}.

Throughout the paper, the notation $C$ denotes the generic constant which may change at each occurrence.  


\section{Preliminaries.}
\label{sec-pre}

\subsection{Probability space and the Ito formula.} 
\label{sub-ito}
In this work, we will use the knowledge of some statistical moments, like expectation $\E$ and variance $\mathrm{Var}$, which are defined as follows.  
\begin{definition}
 We call $(\Omega,\mathcal{F},P)$ a probability space if $\Omega$ denotes the nonempty sample space, $\mathcal{F}$ is the $\sigma$-algebra of $\Omega$ and $P:\mathcal{F}\to [0,1]$ is the probability measure. Furthermore, we denote $\mathcal{B}$, $\mathcal D$, $\mathcal{F}_t$ as the Borel $\sigma$-algebra on $[0,\infty)$, the $\sigma$-algebra of Lebesgue measurable sets in $D\subset \mathbb{R}^{d}$ and the $\sigma$-algebra generated by the random variables $\{B(s)\}_{0\le s\le t}$, respectively. 
 
 For a random variable $X$ defined on $(\Omega,\mathcal{F},P)$, 
 the expectation $\E$ and variance $\mathrm{Var}$ are defined as follows: 
 \begin{equation*}
  \E[X]=\int_\Omega X(\o)\ dP(\o),\quad 
  \mathrm{Var}[X]=\E\left[(X-\E[X])^2 \right].
 \end{equation*}
\end{definition}

Next we state the Ito isometry formula, which plays an important role in the future proof.  
\begin{lemma}[\cite{Bernt2003stochastic}] \label{Ito} 
Let $(\Omega,\mathcal{F},P)$ be a probability space and let $f,g : [0,\infty)\times \Omega\rightarrow\mathbb{R}$
satisfy the following properties:
\begin{itemize}
 \item[$(1)$] $(t,\omega)\mapsto f(t,\omega)$ is $\mathcal{B}\times\mathcal{F}$-measurable;
 \item[$(2)$] $f(t,\omega)$ is $\mathcal{F}_t$-adapted;
 \item[$(3)$] $\E \left[\int_{T_1}^{T_2}f^2(t,\omega)\ dt \right]<\infty$ for some $0\le T_1\le T_2$.
\end{itemize}
Then it follows that
\begin{equation*}
	\label{eq:ito_isometry}
 \E\Big[\big(\int_{T_1}^{T_2}f(t,\omega)\ dB(t)\big)\ 
 \big(\int_{T_1}^{T_2}g(t,\omega)\ dB(t)\big)\Big]
=\E\Big[\int_{T_1}^{T_2}f(t,\omega)g(t,\omega)\ dt\Big].
\end{equation*}
\end{lemma}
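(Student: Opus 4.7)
The plan is to reduce the bilinear statement to the quadratic Itô isometry via polarization, and then establish the quadratic isometry by the standard density argument that approximates $f$ by elementary (piecewise-constant, adapted) processes. Concretely, define the bilinear form
$$J(f,g) := \mathbb{E}\Big[\Big(\int_{T_1}^{T_2} f\,dB\Big)\Big(\int_{T_1}^{T_2} g\,dB\Big)\Big] - \mathbb{E}\Big[\int_{T_1}^{T_2} f(t,\omega) g(t,\omega)\,dt\Big],$$
and use the polarization identity $4fg = (f+g)^2 - (f-g)^2$ together with linearity of the Itô integral to write $4 J(f,g) = J(f+g,f+g) - J(f-g,f-g)$, so it suffices to prove $J(f,f) = 0$ for every $f$ satisfying conditions (1)--(3).

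For the quadratic case, I would proceed in three steps. First, for an elementary adapted process $f(t,\omega) = \sum_{i=0}^{N-1} e_i(\omega)\mathbf{1}_{[t_i,t_{i+1})}(t)$ with $T_1=t_0<t_1<\cdots<t_N=T_2$ and each $e_i$ being $\mathcal{F}_{t_i}$-measurable and square-integrable, the Itô integral is by definition $\sum_i e_i(\omega)(B(t_{i+1})-B(t_i))$. Expanding the square gives diagonal terms $\mathbb{E}[e_i^2 (B(t_{i+1})-B(t_i))^2]$ and cross terms $\mathbb{E}[e_i e_j (B(t_{i+1})-B(t_i))(B(t_{j+1})-B(t_j))]$ for $i<j$. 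Conditioning on $\mathcal{F}_{t_j}$ and using independence of the increment $B(t_{j+1})-B(t_j)$ from $\mathcal{F}_{t_j}$ (together with its zero mean) kills all cross terms, while the diagonal terms collapse to $\mathbb{E}[e_i^2](t_{i+1}-t_i) = \mathbb{E}[\int_{t_i}^{t_{i+1}} f^2\,dt]$, yielding $J(f,f)=0$ on elementary processes.

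Second, I would show that any $f$ satisfying (1)--(3) can be approximated in the norm $\|f\|^2 := \mathbb{E}\int_{T_1}^{T_2} f^2\,dt$ by a sequence of elementary adapted processes $f_n$. The standard recipe is: (a) truncate by $f \wedge n \vee (-n)$; (b) if bounded, approximate by a continuous-in-$t$ adapted process via $t\mapsto \int_{T_1}^t f(s,\omega)\,ds/\varepsilon$-type mollification in the $t$-variable only (to preserve adaptedness); (c) approximate a continuous adapted process by its left-endpoint Riemann sums on a dyadic partition. Each stage is justified by dominated convergence under hypotheses (1)--(3).

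Third, extend the isometry by continuity: since the Itô integral $f\mapsto \int_{T_1}^{T_2} f\,dB$ is an isometry from elementary processes (dense in $L^2([T_1,T_2]\times\Omega)$ in the relevant norm) into $L^2(\Omega)$, it has a unique bounded linear extension, and the identity $J(f,f)=0$ passes to the limit because both sides of the isometry are continuous functions of $f$ in the $L^2$ norm. Combining with the polarization step yields the claimed bilinear identity. The main obstacle is the approximation step: producing an adapted elementary approximation to a general $f$ satisfying (1)--(3) requires that the mollification and partitioning be carried out so as not to ``look into the future,'' which is exactly why the left-endpoint rule and time-only mollification are used; this is the technical heart of the proof, whereas the elementary-process calculation and the polarization are essentially bookkeeping.
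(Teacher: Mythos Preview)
Your proposal is a correct outline of the standard proof of the It\^o isometry: polarization reduces to the quadratic case, which is verified directly for elementary adapted processes using independence of Brownian increments from the past, and then extended by density. This is exactly the argument in \O ksendal's textbook, which is the reference the paper cites.

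Note, however, that the paper does not actually prove this lemma at all. It is stated as Lemma~\oldref{Ito} with the citation \cite{Bernt2003stochastic} and no proof is given; the authors simply import the result from the literature and use it as a tool. So there is no ``paper's own proof'' to compare against --- your write-up supplies what the paper deliberately omits, and it matches the standard source.
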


\subsection{Eigensystem of  $\A$ and the Mittag-Leffler function.}
Since $\A$ is a symmetric uniformly elliptic operator, its eigensystem $\{\lambda_n, \phi_n(x)\}_{n=1}^\infty$ has the following properties:   
$0<\lambda_1\le\lambda_2\le\cdots\le\lambda_n \le \cdots$ 
and $\{\phi_n\}_{n=1}^\infty\subset H^2(D)\cap H_0^1(D)$ constitutes an orthonormal basis of $L^2(D).$ Then by Pazy \cite{Pazy1983}, the fractional power $\mathcal A^\gamma,\ \gamma\in\mathbb R$ can be defined as 
$$\mathcal A^\gamma \varphi=\sum_{n=1}^\infty \lambda_n^\gamma \l \varphi,\phi_n\rd\phi_n,\ \varphi\in L^2(D),$$ 
with domain 
$$
D(\mathcal A^\gamma) = \Big\{\varphi\in L^2(D): 
\sum_{n=1}^\infty\left|\lambda_n^\gamma \l \varphi,\phi_n\rd \right|^2<\infty\Big\}.
$$
The domain $D(\mathcal A^\gamma)$ is a Hilbert space with the norm
$$
\|\varphi\|_{D(\mathcal A^\gamma)}=\Big(
\sum_{n=1}^\infty\left|\lambda_n^\gamma \l \varphi,\phi_n\rd \right|^2\Big)^{1/2}.
$$
We note that $D(\mathcal A^\gamma)\subset H^{2\gamma}(D)$ for $\gamma>0$, which gives $\|\cdot\|_{H^{2\gamma}(D)}\le C\|\cdot\|_{D(\mathcal A^\gamma)}$. This inequality will be used frequently in the future proofs. Also we have $D(\mathcal A^{1/2}) = H^1_0(D)$ and $D(\mathcal A) = H^2(D) \cap H^1_0(D)$. The above mentioned results for the Hilbert spaces $D(\mathcal A^\gamma)$, $\gamma>0$ can be found in e.g., \cite{LionsMagenes:1972, LionsMagenes:1972(2), LionsMagenes:1973}.

In the main theorems, we see that the solution $u$ is in terms of the Mittag-Leffler function $E_{\alpha,\beta}(\cdot)$. Next we will give some knowledge of this function. The Mittag-Leffler function is defined as 
\begin{equation*}
E_{\alpha,\beta}(z) = \sum_{k=0}^\infty \frac{z^k}{\Gamma(k\alpha+\beta)},\quad z\in\mathbb C.
\end{equation*}
Notice that this expression generalizes the natural exponential function since 
$E_{1,1}(z)=e^z.$ 

The next lemma concerns the asymptotic property of the Mittag-Leffler function.   
\begin{lemma} \cite[Theorem 1.4]{Podlubny1999fractional}\label{mittag_bound}
Let $0<\alpha<2$ and $\beta\in\mathbb{R}$ be arbitrary. Then it holds that
\begin{equation}
	\label{eq:ML_decay_rate}
  |E_{\alpha,\beta}(-t)| \leq \frac{C}{1+t},\quad t\ge0,
\end{equation}
where the constant $C>0$ is independent of $t>0$. Also for $t\geq 0$ and $p \in {\mathbb N}^+$,  we have the asymptotic formula
\begin{equation*}
	\label{eq:ML_asymptotic_formula}
  E_{\alpha,\beta}(-t)=-\sum_{k=1}^p \frac{(-t)^{-k}}{\Gamma(\beta-\alpha k)}
  +{\mathcal O}(t^{-1-p}),\quad \mbox{as $t \to \infty$.}
\end{equation*}
\end{lemma}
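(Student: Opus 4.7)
My plan is to derive both parts of the lemma from a single Hankel-contour representation of $E_{\alpha,\beta}$. Starting from Hankel's identity
$$\frac{1}{\Gamma(w)}=\frac{1}{2\pi i}\int_{Ha} e^s s^{-w}\,ds,$$
where $Ha$ is a Hankel contour encircling the negative real axis and opening outward, I would substitute this into the series $E_{\alpha,\beta}(z)=\sum_{k=0}^\infty z^k/\Gamma(k\alpha+\beta)$, interchange summation and integration on a portion of $Ha$ with $|s|^\alpha>|z|$, and sum the resulting geometric series to obtain
$$E_{\alpha,\beta}(z)=\frac{1}{2\pi i}\int_{Ha}\frac{s^{\alpha-\beta}e^s}{s^\alpha-z}\,ds.$$
For $z=-t$ with $t>0$, the hypothesis $\alpha\in(0,2)$ is exactly what guarantees that the poles $s=t^{1/\alpha}e^{\pm i\pi/\alpha}$ of the integrand lie strictly in the sector $|\arg s|>\pi/2$, i.e., to the left of any vertical segment of $Ha$, so that no residues are picked up when the contour is deformed outward.

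Next I would derive the asymptotic expansion by using the finite geometric identity
$$\frac{1}{s^\alpha-z}=-\sum_{k=1}^{p}z^{-k}s^{\alpha(k-1)}+\frac{s^{\alpha p}}{z^{p}(s^\alpha-z)},$$
substituting into the integral representation, and recognizing each of the first $p$ terms as $-z^{-k}/\Gamma(\beta-\alpha k)$ via Hankel's identity with $w=\beta-\alpha k$. Specializing $z=-t$ then produces the claimed finite sum with the factor $(-t)^{-k}$. For the remainder, I would rescale $s=t^{1/\alpha}u$, so that $|s^\alpha-z|\ge c\,t$ along the deformed contour and $e^s=e^{t^{1/\alpha}\re u}$ decays on its far portions; this yields an estimate of order $|z|^{-p-1}=t^{-p-1}$ uniformly in $t$ large.

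The pointwise bound $|E_{\alpha,\beta}(-t)|\le C/(1+t)$ then follows by combining two estimates. For $t\in[0,1]$ the power series converges uniformly, so $|E_{\alpha,\beta}(-t)|\le C_0$; for $t\ge 1$ the asymptotic expansion with $p=1$ gives $|E_{\alpha,\beta}(-t)|\le |\Gamma(\beta-\alpha)|^{-1}t^{-1}+Ct^{-2}\le C't^{-1}$, where the first term simply vanishes if $\beta-\alpha$ is a nonpositive integer. Matching the two estimates at $t=1$ yields a constant compatible with the bound $C/(1+t)$ on all of $[0,\infty)$.

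The main obstacle, in my view, is the contour-deformation step as $\alpha\to 2^{-}$: the branch cut of $s^\alpha$ has to be placed and the arms of $Ha$ opened wide enough that the poles $s=t^{1/\alpha}e^{\pm i\pi/\alpha}$ remain strictly to the left of the contour, while one still keeps enough exponential decay in $e^s$ on its far portions to bound the remainder uniformly in $t$. Verifying these geometric constraints cleanly for every $\alpha\in(0,2)$ is where essentially all the care is required; once the integral representation and its deformation are in hand, extracting the coefficients via Hankel's identity and estimating the remainder are routine.
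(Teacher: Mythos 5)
The paper does not actually prove this lemma --- it is quoted verbatim from Podlubny's book (the decay bound and the asymptotic formula are Theorems 1.6 and 1.4 there) --- and your argument is essentially the proof given in that reference: Hankel's representation of $1/\Gamma$, summation of the geometric series to obtain $E_{\alpha,\beta}(z)=\frac{1}{2\pi i}\int_{Ha}\frac{s^{\alpha-\beta}e^s}{s^\alpha-z}\,ds$, the observation that $0<\alpha<2$ is exactly what keeps the poles $t^{1/\alpha}e^{\pm i\pi/\alpha}$ in the sector $|\arg s|>\pi/2$ (so one may take a contour opening angle $\phi$ with $\pi/2<\phi<\pi/\alpha$), and the finite geometric expansion whose first $p$ terms reproduce $-\sum_{k=1}^p\frac{(-t)^{-k}}{\Gamma(\beta-\alpha k)}$. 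Your outline is correct; the only cosmetic remark is that the remainder is estimated more cleanly on a fixed contour $\gamma(1,\phi)$, where $|s^\alpha+t|\ge c\,t$ uniformly and $\int|e^s|\,|s|^{\alpha p+\alpha-\beta}\,|ds|<\infty$, which gives $O(t^{-p-1})$ directly without the rescaling $s=t^{1/\alpha}u$.
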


A function $f: (0,\infty) \to \mathbb{R}$ is called completely monotonic if 
$f\in C^\infty(0,\infty)$ and 
\begin{equation*}
	(-1)^n f^{(n)} (t) \geq 0
\end{equation*}
for all $t\in (0,\infty)$, i.e. the derivatives are alternating in sign. The next lemma contains the complete monotonicity of $E_{\alpha,\beta}(\cdot)$. 
\begin{lemma}\label{mittag_monotone}
\par For $0<\alpha<1$ and $\alpha\le \beta$,  
functions $t\mapsto E_{\alpha,\beta}(-t)$ is completely monotonic.
\end{lemma}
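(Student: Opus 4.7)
The natural framework is Bernstein's theorem: a smooth function $f:(0,\infty)\to\mathbb{R}$ is completely monotonic if and only if it is the Laplace transform of a non-negative Borel measure on $[0,\infty)$. Since $z\mapsto E_{\alpha,\beta}(z)$ is entire, the required smoothness is automatic, and the task reduces to producing a representation
$$
E_{\alpha,\beta}(-t)=\int_0^\infty e^{-ts}\,K_{\alpha,\beta}(s)\,ds,\qquad t>0,
$$
with $K_{\alpha,\beta}(s)\ge 0$ almost everywhere on $(0,\infty)$.

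My plan is to derive this representation from the Hankel-contour formula
$$
E_{\alpha,\beta}(-t)=\frac{1}{2\pi i}\int_{Ha}\frac{e^\zeta\zeta^{\alpha-\beta}}{\zeta^\alpha+t}\,d\zeta,
$$
obtained by substituting the Hankel formula $1/\Gamma(z)=\frac{1}{2\pi i}\int_{Ha}e^\zeta\zeta^{-z}d\zeta$ into the defining series of $E_{\alpha,\beta}(-t)$ and summing the resulting geometric series in $\zeta^{-\alpha}$. Inserting the elementary identity $(\zeta^\alpha+t)^{-1}=\int_0^\infty e^{-s(\zeta^\alpha+t)}ds$ on the portion of $Ha$ where $\mathrm{Re}(\zeta^\alpha)>0$ (this is possible because $\alpha<1$ permits deforming the contour so that $\arg\zeta$ remains in $(-\pi/(2\alpha),\pi/(2\alpha))$ on the unbounded arcs, handling the remaining compact piece by analytic continuation) and exchanging the order of integration via Fubini produces the representation
$$
E_{\alpha,\beta}(-t)=\int_0^\infty e^{-ts}K_{\alpha,\beta}(s)\,ds,\qquad K_{\alpha,\beta}(s):=\frac{1}{2\pi i}\int_{Ha}e^{\zeta-s\zeta^\alpha}\zeta^{\alpha-\beta}\,d\zeta,
$$
recognizable as a Wright-type function.

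The crux is to show $K_{\alpha,\beta}(s)\ge 0$ for $s>0$ under the hypothesis $\alpha\le\beta$. Collapsing the Hankel contour onto the branch cut along the negative real axis via $\zeta=re^{\pm i\pi}$ and combining the two sides reduces $K_{\alpha,\beta}$ to an explicit real single integral in $r$ whose sign is controlled by $\sin(\pi(\beta-\alpha))$; this is precisely where the constraint $\alpha\le\beta$ enters, since it guarantees $\beta-\alpha\ge 0$ and hence the relevant sine is non-negative on $[0,1]$. For $\beta$ outside the range where the direct sign check succeeds (notably $\beta>1$), I would bootstrap using the algebraic identity $E_{\alpha,\beta+\alpha}(-t)=\frac{1}{t}\bigl[\tfrac{1}{\Gamma(\beta)}-E_{\alpha,\beta}(-t)\bigr]$, which follows directly from the series expansion. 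Once $E_{\alpha,\beta}(-t)$ has been established CM with $E_{\alpha,\beta}(0^+)=1/\Gamma(\beta)$, the function $g(t):=\tfrac{1}{\Gamma(\beta)}-E_{\alpha,\beta}(-t)$ is Bernstein with $g(0)=0$ (its derivative is the negative of the derivative of a CM function, hence CM), so its Lévy representation $g(t)=\int_0^\infty(1-e^{-tu})d\nu(u)$ yields
$$
E_{\alpha,\beta+\alpha}(-t)=\frac{g(t)}{t}=\int_0^\infty e^{-ts}\nu([s,\infty))\,ds,
$$
a Laplace transform of a non-negative function and therefore CM. An induction extends the conclusion from a suitable base strip of $\beta$ to all $\beta\ge\alpha$.

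The main obstacle is the sign analysis of $K_{\alpha,\beta}$ in the base strip and the selection of a base strip whose $\alpha$-translates cover $[\alpha,\infty)$ for every $\alpha\in(0,1)$; the Bernstein-theorem application, the Fubini interchange, and the Lévy-representation argument are all routine once the kernel is shown to be non-negative.
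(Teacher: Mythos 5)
The paper offers no proof of this lemma beyond citing Pollard and Gorenflo--Kilbas--Mainardi--Rogosin, so your proposal must be judged against the standard literature argument, whose skeleton you reproduce correctly: Bernstein's theorem; the Hankel representation $E_{\alpha,\beta}(-t)=\frac{1}{2\pi i}\int_{Ha}e^{\zeta}\zeta^{\alpha-\beta}(\zeta^{\alpha}+t)^{-1}\,d\zeta$; deformation of the rays to an angle $\theta$ with $\pi/2<\theta<\pi/(2\alpha)$ (possible precisely because $\alpha<1$ --- note you also need the lower bound $\theta>\pi/2$, which your stated range $(-\pi/(2\alpha),\pi/(2\alpha))$ omits, so that $e^{\zeta}$ still decays on the rays); the Fubini interchange producing the Wright-type kernel $K_{\alpha,\beta}(s)=W_{-\alpha,\beta-\alpha}(-s)$; and the recurrence $E_{\alpha,\beta+\alpha}(-t)=t^{-1}\bigl(\Gamma(\beta)^{-1}-E_{\alpha,\beta}(-t)\bigr)$ combined with the fact that $g(t)/t$ is completely monotonic whenever $g$ is Bernstein with $g(0)=0$. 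Those parts are sound.

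The gap sits exactly at what you call the crux. Collapsing the Hankel contour in $K_{\alpha,\beta}(s)=\frac{1}{2\pi i}\int_{Ha}e^{\zeta-s\zeta^{\alpha}}\zeta^{\alpha-\beta}\,d\zeta$ onto $\zeta=re^{\pm i\pi}$ gives
\begin{equation*}
K_{\alpha,\beta}(s)=\frac{1}{\pi}\int_0^{\infty}e^{-r-sr^{\alpha}\cos\pi\alpha}\,r^{\alpha-\beta}\,\sin\bigl(sr^{\alpha}\sin\pi\alpha+\pi(\beta-\alpha)\bigr)\,dr,
\end{equation*}
and since $\sin\pi\alpha>0$ the sine factor oscillates indefinitely as $r\to\infty$ for every fixed $s>0$: the integrand is not of one sign, and its sign is not ``controlled by $\sin(\pi(\beta-\alpha))$.'' (The representation whose density is manifestly nonnegative for $\alpha\le\beta\le 1$ is the one obtained by collapsing the contour \emph{before} introducing the Laplace kernel, namely $E_{\alpha,\beta}(-t)=\frac{1}{\pi}\int_0^{\infty}e^{-r}r^{\alpha-\beta}\frac{r^{\alpha}\sin\pi\beta+t\sin\pi(\beta-\alpha)}{r^{2\alpha}+2tr^{\alpha}\cos\pi\alpha+t^{2}}\,dr$; but pointwise positivity there does not yield complete monotonicity in $t$, because $t\mapsto(at+b)/(t^{2}+2ct+d)$ with complex roots is not CM in general.) The nonnegativity of $W_{-\alpha,\mu}(-s)$ for $\mu\ge 0$ is the actual content of Pollard's theorem: for $\beta=1$ it is equivalent to the positivity of the one-sided $\alpha$-stable density, which is proved not by a sign inspection but by noting that $\lambda\mapsto e^{-s\lambda^{\alpha}}$ is CM (a CM function composed with the Bernstein function $\lambda^{\alpha}$) and invoking Bernstein's theorem; the general $\beta$ case is Schneider's theorem, handled via Mellin--Barnes/H-function computations. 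As written, your crux step would therefore fail and must be replaced by one of these arguments, after which your recurrence does legitimately propagate the conclusion in steps of $\alpha$ --- subject to the covering issue you yourself flag, namely that the set of base values of $\beta$ you can treat directly, translated by multiples of $\alpha$, must cover all of $[\alpha,\infty)$ for every $\alpha\in(0,1)$.
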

\begin{proof}
 See \cite{pollard1948completely} and \cite[Section 4.10.2]{gorenflo2014mittag}.
\end{proof}

At last, we introduce the following two formulas.  
\begin{lemma}{\cite[$(4.4.4)$]{gorenflo2014mittag}}
\label{lem-ml-int}
For $\alpha>0$ and $\beta>0$, 
 \begin{equation*}
  \int_0^t  E_{\alpha,\beta}(\lambda \tau^\alpha) \tau^{\beta-1}\ d\tau
  =t^\beta E_{\alpha,\beta+1}(\lambda t^\alpha).
 \end{equation*}
\end{lemma}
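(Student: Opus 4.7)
The plan is to prove the identity by expanding $E_{\alpha,\beta}(\lambda\tau^\alpha)$ into its defining power series, integrating term by term, and recognizing the resulting series as $t^\beta E_{\alpha,\beta+1}(\lambda t^\alpha)$.

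First I would write
\begin{equation*}
\int_0^t E_{\alpha,\beta}(\lambda\tau^\alpha)\,\tau^{\beta-1}\,d\tau
= \int_0^t \sum_{k=0}^\infty \frac{\lambda^k \tau^{\alpha k+\beta-1}}{\Gamma(\alpha k+\beta)}\,d\tau.
\end{equation*}
Since $E_{\alpha,\beta}$ is an entire function, the series defining it converges uniformly for $\tau\in[0,t]$ on any fixed interval, and multiplication by the locally integrable factor $\tau^{\beta-1}$ (using $\beta>0$) does not destroy uniform integrability, so interchanging the sum and the integral is justified.

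Next I would perform the termwise integration, using $\alpha k+\beta>0$, to obtain
\begin{equation*}
\sum_{k=0}^\infty \frac{\lambda^k}{\Gamma(\alpha k+\beta)}\cdot \frac{t^{\alpha k+\beta}}{\alpha k+\beta}
= t^\beta \sum_{k=0}^\infty \frac{(\lambda t^\alpha)^k}{(\alpha k+\beta)\,\Gamma(\alpha k+\beta)}.
\end{equation*}
Then I would invoke the functional equation $\Gamma(z+1)=z\Gamma(z)$ with $z=\alpha k+\beta$, giving $(\alpha k+\beta)\Gamma(\alpha k+\beta)=\Gamma(\alpha k+\beta+1)$, so the expression collapses to
\begin{equation*}
t^\beta \sum_{k=0}^\infty \frac{(\lambda t^\alpha)^k}{\Gamma(\alpha k+\beta+1)}
= t^\beta E_{\alpha,\beta+1}(\lambda t^\alpha),
\end{equation*}
as required.

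There is no real obstacle here; the only point that needs a word of care is the interchange of summation and integration, which is routine because the Mittag-Leffler series is entire and the factor $\tau^{\beta-1}$ is integrable on $[0,t]$ under the assumption $\beta>0$. Everything else is an application of the $\Gamma$-function functional equation.
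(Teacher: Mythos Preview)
Your argument is correct: expanding the Mittag--Leffler series, integrating termwise (the interchange being justified because the series converges uniformly on $[0,t]$ and $\tau^{\beta-1}\in L^1(0,t)$ for $\beta>0$), and applying $\Gamma(z+1)=z\Gamma(z)$ gives the result. The paper does not prove this lemma itself but simply cites it from \cite{gorenflo2014mittag}; your derivation is the standard one.
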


\begin{lemma}{\cite[Lemma $3.2$]{sakamoto2011initial}}\label{mittag_derivative}
	For $\lambda>0,\ \alpha>0$ and $\N+,$ we have that for $t>0$, 
	$$\frac{d^n}{dt^n} \big( E_{\alpha,1}(-\lambda t^\alpha) \big)
	=-\lambda t^{\alpha-n}E_{\alpha,\alpha-n+1}(-\lambda t^\alpha).$$
\end{lemma}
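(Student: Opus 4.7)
The plan is to prove the formula by direct term-by-term differentiation of the Mittag-Leffler series. Since $E_{\alpha,\beta}$ is an entire function of its argument, the series
$$E_{\alpha,1}(-\lambda t^\alpha) = \sum_{k=0}^\infty \frac{(-\lambda)^k t^{\alpha k}}{\Gamma(\alpha k+1)}$$
converges uniformly, together with all its formal term-by-term derivatives, on any compact subset of $(0,\infty)$. I would justify this by the standard estimate showing that $1/\Gamma(\alpha k+1)$ decays faster than any geometric rate in $k$, so differentiating against $t^{\alpha k-n}$ preserves uniform convergence on compacts of $(0,\infty)$.

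Next I would differentiate each monomial via
$$\frac{d^n}{dt^n}\bigl(t^{\alpha k}\bigr) = \frac{\Gamma(\alpha k+1)}{\Gamma(\alpha k-n+1)}\, t^{\alpha k-n},$$
noting that the $k=0$ term drops out because $1/\Gamma(1-n)=0$ for $n\in\mathbb{N}^+$. After cancellation of $\Gamma(\alpha k+1)$ between numerator and denominator one obtains
$$\frac{d^n}{dt^n}E_{\alpha,1}(-\lambda t^\alpha) = \sum_{k=1}^\infty \frac{(-\lambda)^k t^{\alpha k-n}}{\Gamma(\alpha k-n+1)}.$$
Reindexing with $j=k-1$ and pulling out the prefactor $-\lambda t^{\alpha-n}$ gives
$$-\lambda t^{\alpha-n}\sum_{j=0}^\infty \frac{(-\lambda)^j t^{\alpha j}}{\Gamma(\alpha j+\alpha-n+1)} = -\lambda t^{\alpha-n} E_{\alpha,\alpha-n+1}(-\lambda t^\alpha),$$
which is exactly the claimed identity.

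The only subtle point is the justification of the termwise differentiation (especially when $\alpha-n$ becomes negative, so the individual terms can blow up as $t\to 0^+$, but the restriction $t>0$ in the statement sidesteps this). If one prefers to avoid that analytic point, I would instead argue by induction on $n$: the base case $n=1$ follows from the identity $\Gamma(\alpha k+1)=\alpha k\,\Gamma(\alpha k)$, and the inductive step uses the product rule on $t^{\alpha-n}E_{\alpha,\alpha-n+1}(-\lambda t^\alpha)$ together with the analogous one-step differentiation formula
$$\frac{d}{dt}E_{\alpha,\beta}(-\lambda t^\alpha) = -\lambda t^{\alpha-1}E_{\alpha,\beta+\alpha-1}(-\lambda t^\alpha)/\text{(suitable factor)},$$
proved by the same reindexing trick. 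Either route is essentially bookkeeping; the main obstacle is just being careful with the $k=0$ term and with the shift of summation index.
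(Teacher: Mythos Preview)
Your direct term-by-term differentiation is correct: the cancellation of $\Gamma(\alpha k+1)$, the vanishing of the $k=0$ term via $1/\Gamma(1-n)=0$, and the reindexing all go through exactly as you describe, and the justification of termwise differentiation on compacts of $(0,\infty)$ is standard. Note, however, that the paper does not actually supply a proof of this lemma---it simply quotes it from \cite[Lemma~3.2]{sakamoto2011initial}---so there is nothing in the paper to compare your argument against beyond the citation. Your approach is in fact the standard one (and essentially what appears in that reference). The only loose end in your write-up is the vague ``suitable factor'' in the alternative inductive route; if you keep that paragraph, you should either state the precise one-step identity (e.g., $\frac{d}{dt}\big[t^{\beta-1}E_{\alpha,\beta}(-\lambda t^\alpha)\big]=t^{\beta-2}E_{\alpha,\beta-1}(-\lambda t^\alpha)$) or simply drop the inductive alternative, since the direct argument is already complete.
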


\section{Proof of Theorem \ref{thm-initial}.}\label{sec-initial1}
In this section we set $\alpha\in(0,1)$ and $f_1=f_2=0$. 

\subsection{Continuous regularity. }\label{sec-subdiff-conti}
We prove the continuous regularity of the function $u$ defined in $\eqref{solution_initial}$ first. See the next lemma for details. 
\begin{lemma}\label{lemma_regularity_c_initial}
Let $u_0$ satisfy Assumption \ref{assumption}, then the function $u$ defined in \eqref{solution_initial} belongs to $C([0,\infty);L^2(D\times\Omega))$ and 
 \begin{equation*}
 \|u\|_{C([0,\infty);L^2(D\times\Omega))}\le 
 C\|u_0\|_{L^2(D\times\Omega)}.
 \end{equation*}
\end{lemma}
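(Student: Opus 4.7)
The plan is to establish both the norm bound and the continuity by separating the $L^2(D)$ estimate from the $L^2(\Omega)$ estimate via Fubini, then applying Parseval's identity in the spatial variable together with the uniform Mittag--Leffler bound from Lemma \ref{mittag_bound}.

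First, I would fix $t \ge 0$ and a.e.\ $\omega \in \Omega$, and use the orthonormality of $\{\phi_n\}$ together with Parseval to write
\begin{equation*}
\|u(\cdot,t,\omega)\|_{L^2(D)}^2 = \sum_{n=1}^\infty |\langle u_0(\cdot,\omega),\phi_n\rangle_{L^2(D)}|^2 \, |E_{\alpha,1}(-\lambda_n t^\alpha)|^2.
\end{equation*}
By Lemma \ref{mittag_bound}, since $\alpha \in (0,1)$ and $\lambda_n t^\alpha \ge 0$, we have $|E_{\alpha,1}(-\lambda_n t^\alpha)| \le C$ uniformly in $n$ and $t \ge 0$. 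Hence
\begin{equation*}
\|u(\cdot,t,\omega)\|_{L^2(D)}^2 \le C^2 \sum_{n=1}^\infty |\langle u_0(\cdot,\omega),\phi_n\rangle_{L^2(D)}|^2 = C^2 \|u_0(\cdot,\omega)\|_{L^2(D)}^2,
\end{equation*}
and taking expectation (Fubini, treating $L^2(D \times \Omega)$ as $L^2(\Omega; L^2(D))$) yields the desired estimate
\begin{equation*}
\|u(\cdot,t,\cdot)\|_{L^2(D\times\Omega)}^2 \le C^2 \|u_0\|_{L^2(D\times\Omega)}^2
\end{equation*}
uniformly in $t \ge 0$. Taking the supremum over $t$ gives the stated norm bound.

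Next, to show $u \in C([0,\infty); L^2(D\times\Omega))$, I would fix $s \ge 0$ and estimate, for any $t \ge 0$,
\begin{equation*}
\|u(\cdot,t,\cdot) - u(\cdot,s,\cdot)\|_{L^2(D\times\Omega)}^2 = \sum_{n=1}^\infty \mathbb{E}\big[|\langle u_0,\phi_n\rangle_{L^2(D)}|^2\big] \, |E_{\alpha,1}(-\lambda_n t^\alpha) - E_{\alpha,1}(-\lambda_n s^\alpha)|^2.
\end{equation*}
For each fixed $n$, the Mittag--Leffler function $t \mapsto E_{\alpha,1}(-\lambda_n t^\alpha)$ is continuous on $[0,\infty)$ (it is the power series composed with a continuous function), so each summand tends to $0$ as $t \to s$. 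Each summand is dominated by the summable majorant $4C^2 \mathbb{E}[|\langle u_0,\phi_n\rangle_{L^2(D)}|^2]$, whose total sum equals $4C^2\|u_0\|_{L^2(D\times\Omega)}^2 < \infty$ by Assumption \ref{assumption}(3). The dominated convergence theorem (applied to counting measure on $\mathbb{N}^+$) then yields the continuity at $s$.

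The only mild subtlety, rather than a real obstacle, is the Fubini interchange between the sum over $n$ and the expectation over $\omega$; this is justified by the absolute summability of the majorant $C^2\mathbb{E}[|\langle u_0,\phi_n\rangle|^2]$ established in the first step. Both conclusions of the lemma then follow immediately.
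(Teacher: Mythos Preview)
Your proof is correct and follows essentially the same approach as the paper: Parseval in the spatial variable, the uniform Mittag--Leffler bound from Lemma~\ref{mittag_bound}, and termwise continuity combined with a summable majorant. The only cosmetic difference is that the paper phrases the continuity step via the Weierstrass M-test/uniform limit theorem, whereas you invoke dominated convergence on the counting measure; these are the same argument, and your justification of the $\sum_n$--$\mathbb{E}$ interchange (which the paper handles by monotone convergence for nonnegative terms) is equally valid.
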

\begin{proof}
 Fixing $t\ge 0$ and considering $h\in [-t,\infty)$, we have 
 \begin{align*}
 &\|u(\cdot,t+h,\cdot)-u(\cdot,t,\cdot)\|^2_{L^2(D\times\Omega)}\\
 &=\E\Big[\|u(\cdot,t+h,\o)-u(\cdot,t,\o)\|^2_{L^2(D)}\Big]\\
 &=\E\Big[\sum_{n=1}^\infty \l u_0(\cdot,\o),\phi_n\rd^2 \big[E_{\alpha,1}
 (-\lambda_n (t+h)^\alpha) -E_{\alpha,1}(-\lambda_n t^\alpha)\big]^2\Big]\\
 &=\sum_{n=1}^\infty \E\left[\l u_0(\cdot,\o),\phi_n\rd^2 \right]\ \big[E_{\alpha,1}
 (-\lambda_n (t+h)^\alpha) -E_{\alpha,1}(-\lambda_n t^\alpha)\big]^2\\
 &=:\sum_{n=1}^\infty S_n(h).
 \end{align*}
 The monotone convergence theorem is used above. Throughout this work, the monotone convergence theorem will be used frequently. We may omit to mention this in the future proofs.  
 From Lemma \ref{mittag_bound}, we see that for $h\in [-t,\infty)$, 
 \begin{equation*}
  \sum_{n=1}^\infty |S_n(h)|\le C \sum_{n=1}^\infty \E\Big[\l u_0(\cdot,\o),\phi_n\rd^2 \Big]=C \|u_0\|^2_{L^2(D\times \Omega)}<\infty. 
 \end{equation*}
This gives the uniform convergence of the series $\sum_{n=1}^\infty S_n(h)$ 
on $[-t,\infty)$. Also, obviously we can see $S_n(h)$ is continuous on $[-t,\infty)$ for each $\N+$. So, by the uniform limit theorem, we have the continuity of $\sum_{n=1}^\infty S_n(h)$ on $[-t,\infty)$, which leads to 
\begin{equation*}
 \lim_{h\to 0} \|u(\cdot,t+h,\cdot)-u(\cdot,t,\cdot)\|^2_{L^2(D\times\Omega)}= \lim_{h\to 0}\sum_{n=1}^\infty S_n(h)=\sum_{n=1}^\infty S_n(0)=0.
\end{equation*}
This gives $u\in C([0,\infty);L^2(D\times\Omega))$.

Furthermore, we have for $t\in[0,\infty)$, 
\begin{equation*}
\begin{aligned}
 \|u(\cdot,t,\cdot)\|^2_{L^2(D\times\Omega)}
 &=\int_\Omega\sum_{n=1}^\infty\l u_0(\cdot,\o),\phi_n\rd^2
 \ E^2_{\alpha,1}(-\lambda_n t^\alpha)\ dP(\omega)\\
 &\le C\int_\Omega\sum_{n=1}^\infty \l u_0(\cdot,\o),\phi_n\rd^2\ dP(\omega)\\
 &=C\|u_0\|^2_{L^2(D\times\Omega)},
 \end{aligned}
\end{equation*}
which means $ \|u\|_{C([0,\infty);L^2(D\times\Omega))} 
 \le C\|u_0\|_{L^2(D\times\Omega)}$.
This completes the proof of the lemma. 
\end{proof}

\subsection{$L^2$-regularity.}
Next we consider the $L^2$-regularity of the function $u$ defined in \eqref{solution_initial}, which is included by the following lemma.  
\begin{lemma}\label{lemma_regularity_lh_initial}
Under Assumption \ref{assumption}, for $T>0$, the function $u$ defined in \eqref{solution_initial} satisfies 
  \begin{equation*}
   \begin{aligned}
\|u\|_{L^2(D\times(0,T)\times\Omega)}
 &\le CT^{(1-\alpha)/2}\|u_0\|_{L^2(D\times\Omega)},\\
      \|u\|_{L^2((0,T)\times\Omega;H^2(D))}
      &\le CT^{(1-\alpha)/2}\|u_0\|_{L^2(\Omega;H^1(D))}.
  \end{aligned}
 \end{equation*}
\end{lemma}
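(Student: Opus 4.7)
The plan is to reduce both estimates to a single integral bound on the Mittag-Leffler function that yields a gain of $\lambda_n^{-1}$, and then sum via Parseval. Applying the monotone convergence theorem and Parseval's identity (exactly as in the proof of Lemma \ref{lemma_regularity_c_initial}) to the representation \eqref{solution_initial} gives
\begin{equation*}
\|u\|^2_{L^2(D\times(0,T)\times\Omega)} = \sum_{n=1}^\infty \E\big[\l u_0(\cdot,\o),\phi_n\rd^2\big]\int_0^T E_{\alpha,1}^2(-\lambda_n t^\alpha)\,dt,
\end{equation*}
while the embedding $\|\cdot\|_{H^2(D)}\le C\|\cdot\|_{D(\A)}$ recalled in Section \ref{sec-pre} produces
\begin{equation*}
\|u\|^2_{L^2((0,T)\times\Omega;H^2(D))}\le C\sum_{n=1}^\infty \lambda_n^2\,\E\big[\l u_0(\cdot,\o),\phi_n\rd^2\big]\int_0^T E_{\alpha,1}^2(-\lambda_n t^\alpha)\,dt.
\end{equation*}
Both quantities are therefore controlled once one bounds $I_n(T):=\int_0^T E_{\alpha,1}^2(-\lambda_n t^\alpha)\,dt$.

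\textbf{Key estimate.} I will show that for $\alpha\in(0,1)$ there is $C>0$ with
\begin{equation*}
I_n(T)\le \frac{C\,T^{1-\alpha}}{\lambda_n},\qquad n\ge 1,\ T>0.
\end{equation*}
The starting point is Lemma \ref{mittag_bound}, which gives $|E_{\alpha,1}(-\lambda_n t^\alpha)|\le C(1+\lambda_n t^\alpha)^{-1}$, so $E_{\alpha,1}^2(-\lambda_n t^\alpha)\le C(1+\lambda_n t^\alpha)^{-1}$. I then split the integral at the critical time $t_\star:=\lambda_n^{-1/\alpha}$: for $t\le t_\star$ the trivial bound $E_{\alpha,1}^2\le C$ produces a contribution bounded by $C\min(T,t_\star)$, whereas for $t>t_\star$ (only relevant when $T>t_\star$) the bound $E_{\alpha,1}^2\le C/(\lambda_n t^\alpha)$ produces a contribution bounded by $CT^{1-\alpha}/((1-\alpha)\lambda_n)$. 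A short algebraic check using the definition of $t_\star$ shows $\min(T,t_\star)\le T^{1-\alpha}/\lambda_n$ in either regime ($T\le t_\star$ or $T>t_\star$), so combining the two contributions yields the claim.

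\textbf{Conclusion and main obstacle.} Inserting the key estimate into the two displays of the Plan and using $\lambda_n\ge\lambda_1$ to absorb $\lambda_n^{-1}$ for the first estimate, I obtain
\begin{equation*}
\|u\|^2_{L^2(D\times(0,T)\times\Omega)}\le CT^{1-\alpha}\sum_{n=1}^\infty \lambda_n^{-1}\,\E\big[\l u_0,\phi_n\rd^2\big]\le CT^{1-\alpha}\|u_0\|^2_{L^2(D\times\Omega)}.
\end{equation*}
For the second bound,
\begin{equation*}
\|u\|^2_{L^2((0,T)\times\Omega;H^2(D))}\le CT^{1-\alpha}\sum_{n=1}^\infty \lambda_n\,\E\big[\l u_0,\phi_n\rd^2\big]=CT^{1-\alpha}\|u_0\|^2_{L^2(\Omega;D(\A^{1/2}))},
\end{equation*}
which is controlled by $CT^{1-\alpha}\|u_0\|^2_{L^2(\Omega;H^1(D))}$ because Assumption \ref{assumption} gives $u_0(\cdot,\o)\in H_0^1(D)=D(\A^{1/2})$ almost surely. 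Taking square roots yields both claimed inequalities. The main obstacle is the key estimate itself: the naive bound $E_{\alpha,1}^2\le C$ only gives $I_n(T)\le CT$, which, once multiplied by $\lambda_n^2$ in the $H^2$-bound, fails to sum against the $H^1(D)$-regularity of $u_0$. Exploiting the decay of $E_{\alpha,1}(-\lambda_n t^\alpha)$ in the regime $\lambda_n t^\alpha\ge 1$ is what provides the crucial $\lambda_n^{-1}$ factor and the correct exponent $(1-\alpha)/2$ in $T$.
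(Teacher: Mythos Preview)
Your argument is correct and follows essentially the same route as the paper: both reduce everything to the bound $\int_0^T E_{\alpha,1}^2(-\lambda_n t^\alpha)\,dt\le C\lambda_n^{-1}T^{1-\alpha}$ obtained from Lemma~\ref{mittag_bound}, and then conclude via Parseval and $D(\A^{1/2})=H_0^1(D)$. The only cosmetic difference is that the paper derives this integral bound from the pointwise inequality $E_{\alpha,1}^2(-\lambda_n t^\alpha)\le C\lambda_n^{-1}t^{-\alpha}$ (writing $(1+\lambda_n t^\alpha)^{-2}=\lambda_n^{-1}t^{-\alpha}\cdot\frac{\lambda_n t^\alpha}{(1+\lambda_n t^\alpha)^2}$) rather than splitting at $t_\star$, which avoids the case analysis but is otherwise equivalent.
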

\begin{proof}
 For the representation \eqref{solution_initial}, we employ Lemma \ref{mittag_bound} to derive that 
 \begin{equation*}
  \begin{aligned}
   \|u(\cdot,t,\omega)\|^2_{L^2(D)}
   &=\sum_{n=1}^\infty \l u_0(\cdot,\o),\phi_n\rd^2 E_{\alpha,1}^2(-\lambda_nt^\alpha)\\
   &\le \sum_{n=1}^\infty \l u_0(\cdot,\o),\phi_n\rd^2 \Big(\frac{C}{1+\lambda_nt^\alpha}\Big)^2\\
   &= C\sum_{n=1}^\infty \l u_0(\cdot,\o),\phi_n\rd^2\lambda_n^{-1}t^{-\alpha}
   \Big(\frac{\lambda_n^{1/2}t^{\alpha/2}}{1+\lambda_nt^\alpha}\Big)^2\\
   &\le C\lambda_1^{-1}t^{-\alpha}\|u_0(\cdot,\omega)\|_{L^2(D)}^2,
  \end{aligned}
 \end{equation*}
which gives
\begin{equation*}
\begin{aligned}
  \|u\|^2_{L^2(D\times(0,T)\times\Omega)}
  =&\int_{(0,T)\times\Omega} \|u(\cdot,t,\omega)\|^2_{L^2(D)}
  \ dt\ dP(\omega)\\
  \le& CT^{1-\alpha}\|u_0\|^2_{L^2(D\times\Omega)}.
\end{aligned} 
\end{equation*}

Now if we further assume $u_0\in L^2(\Omega;H_0^1(D))$, by noting the inequality $\|u\|_{H^2(D)} \le C\|u\|_{D(\mathcal{A})}$ with $u\in H^2(D)\cap H_0^1(D)$, we can analogously obtain
  \begin{align*}
   \|u(\cdot,t,\omega)\|^2_{H^2(D)}
   &\le C\sum_{n=1}^\infty \lambda_n^2\l u_0(\cdot,\o),\phi_n\rd^2 E_{\alpha,1}^2(-\lambda_nt^\alpha)\\
   &\le C\sum_{n=1}^\infty \lambda_n^2\l u_0(\cdot,\o),\phi_n\rd^2 \Big(\frac{C}{1+\lambda_nt^\alpha}\Big)^2\\
   &= C\sum_{n=1}^\infty \l u_0(\cdot,\o),\phi_n\rd^2\lambda_nt^{-\alpha}
   \Big(\frac{\lambda_n^{1/2}t^{\alpha/2}}{1+\lambda_nt^\alpha}\Big)^2\\
   &\le Ct^{-\alpha}\|\A^{1/2}u_0(\cdot,\omega)\|_{L^2(D)}^2
   \le Ct^{-\alpha}\|u_0(\cdot,\omega)\|_{H^1(D)}^2.
  \end{align*}
This gives 
\begin{equation*}
  \|u\|_{L^2((0,T)\times\Omega;H^2(D))}^2\le CT^{1-\alpha}\|u_0\|^2_{L^2(\Omega;H^1(D))},
\end{equation*}
and completes the proof of the lemma. 
\end{proof}

\subsection{Existence and uniqueness.}
At last, the existence and uniqueness of the weak solution are discussed in the next lemma. 
\begin{lemma}\label{lemma_existence_initial}
 Under Definition \ref{def-solution}, $u$ defined by \eqref{solution_initial} is 
 the unique stochastic weak solution of initial-boundary value problem \eqref{equ-gov}.
\end{lemma}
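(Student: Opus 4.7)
\textbf{Proof proposal for Lemma \ref{lemma_existence_initial}.}

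The plan is to verify existence by plugging the series \eqref{solution_initial} into the variational identity \eqref{equ-var}, and then to prove uniqueness by reducing the difference of two weak solutions to a scalar Volterra equation for each Fourier mode. The $C([0,\infty);L^2(D\times\Omega))$-regularity required in Definition \ref{def-solution} has already been supplied by Lemma \ref{lemma_regularity_c_initial}, so only the variational equality and uniqueness remain to be addressed.

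\textbf{Existence.} I would first test against an eigenfunction $\varphi=\phi_m$. Using $\mathcal{A}\phi_m=\lambda_m\phi_m$, orthonormality of $\{\phi_n\}$, and the series \eqref{solution_initial}, the variational identity collapses to the scalar identity
\begin{equation*}
 \l u_0(\cdot,\o),\phi_m\rd E_{\alpha,1}(-\lambda_m t^\alpha)
 + \lambda_m\, I_t^\alpha\!\left[\l u_0(\cdot,\o),\phi_m\rd E_{\alpha,1}(-\lambda_m t^\alpha)\right]
 \stackrel{?}{=}\l u_0(\cdot,\o),\phi_m\rd.
\end{equation*}
This follows from the classical fractional-integration identity $\lambda\, I_t^\alpha E_{\alpha,1}(-\lambda t^\alpha)=1-E_{\alpha,1}(-\lambda t^\alpha)$, which in turn is a consequence of $\partial_t^\alpha E_{\alpha,1}(-\lambda t^\alpha)=-\lambda E_{\alpha,1}(-\lambda t^\alpha)$ (Lemma \ref{mittag_derivative} with $n=\alpha$-type computation via the series) combined with $I_t^\alpha \partial_t^\alpha$ annihilating the value at $t=0$. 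For a general $\varphi\in H^2(D)\cap H_0^1(D)$, I would expand $\varphi=\sum_m \l\varphi,\phi_m\rd \phi_m$ and $\mathcal{A}\varphi=\sum_m \lambda_m\l\varphi,\phi_m\rd \phi_m$ (both convergent in $L^2(D)$ since $\varphi\in D(\mathcal{A})$), and then sum the scalar identities over $m$ with coefficient $\l\varphi,\phi_m\rd$. The technical point is to justify swapping $I_t^\alpha$ with the infinite sum; this is done by dominated convergence for the Lebesgue integral defining $I_t^\alpha$, after noting that Lemma \ref{mittag_bound} gives $|\lambda_m E_{\alpha,1}(-\lambda_m \tau^\alpha)|\le C\tau^{-\alpha}$, and that $\sum_m \l u_0(\cdot,\o),\phi_m\rd^2 \lambda_m^2\l\varphi,\phi_m\rd^2<\infty$ for almost every $\omega$ by Cauchy--Schwarz together with $u_0(\cdot,\omega)\in L^2(D)$ and $\mathcal{A}\varphi\in L^2(D)$.

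\textbf{Uniqueness.} Suppose $u^{(1)},u^{(2)}$ are two stochastic weak solutions with identical data. Let $w=u^{(1)}-u^{(2)}\in C([0,\infty);L^2(D\times\Omega))$; by Definition \ref{def-solution} it satisfies, for a.e.\ $\omega$, every $t>0$, and every $\varphi\in H^2(D)\cap H_0^1(D)$,
\begin{equation*}
 \l w(\cdot,t,\o),\varphi\rd + I_t^\alpha\l w(\cdot,t,\o),\mathcal{A}\varphi\rd = 0.
\end{equation*}
Choosing $\varphi=\phi_n$ and writing $w_n(t,\o):=\l w(\cdot,t,\o),\phi_n\rd$, this becomes the scalar Volterra equation
\begin{equation*}
 w_n(t,\o) + \frac{\lambda_n}{\Gamma(\alpha)}\int_0^t (t-\tau)^{\alpha-1}w_n(\tau,\o)\,d\tau = 0,\qquad t\ge 0,
\end{equation*}
with $w_n(\cdot,\o)$ continuous in $t$. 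A standard Gronwall-type iteration (or the resolvent-kernel formula via $E_{\alpha,1}$) yields $w_n(\cdot,\o)\equiv 0$ for each $n$ and a.e.\ $\omega$, and since $\{\phi_n\}$ is an orthonormal basis of $L^2(D)$, Parseval gives $w(\cdot,t,\o)=0$ in $L^2(D)$ for every $t$ and a.e.\ $\omega$.

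\textbf{Anticipated obstacle.} The only non-routine point is the rigorous interchange of the infinite eigenexpansion with $I_t^\alpha$ in the existence step; everything else is algebraic manipulation with Mittag--Leffler identities or a one-line Volterra argument. I would handle this by producing, via Lemma \ref{mittag_bound}, a $\tau$-integrable majorant on $[0,t]$ that is independent of the truncation level, so that dominated convergence applies uniformly in $\omega$ on a full-measure set.
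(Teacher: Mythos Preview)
Your proposal is correct and follows essentially the same route as the paper: test against eigenfunctions to reduce to the scalar identity $v_n+\lambda_n I_t^\alpha v_n=\l u_0,\phi_n\rd$, verify that $\l u_0,\phi_n\rd E_{\alpha,1}(-\lambda_n t^\alpha)$ solves it via the identity $\lambda I_t^\alpha E_{\alpha,1}(-\lambda t^\alpha)=1-E_{\alpha,1}(-\lambda t^\alpha)$, and then sum with coefficients $\l\varphi,\phi_n\rd$ for the existence part, while uniqueness comes from the scalar Volterra equation for each mode. The only cosmetic difference is that the paper outsources the Volterra uniqueness to \cite[Theorem 3.25]{KilbasSrivastavaTrujillo:2006} rather than sketching a Gronwall iteration, and it does not explicitly discuss the sum/$I_t^\alpha$ interchange that you flag as the anticipated obstacle.
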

\begin{proof}
 We assume that $u$ is the stochastic weak solution of \eqref{equ-gov} in the sense of Definition \ref{def-solution}. Given $\N+$, \eqref{equ-var} with $\varphi=\phi_n$ implies that for $P$-a.e.  $\omega\in \Omega$, we have
 \begin{equation*}
  \langle u(\cdot,t,\o),\phi_n \rangle_{L^2(D)} 
+ \lambda_nI_t^\alpha\langle   u(\cdot,t,\o),\phi_n\rangle_{L^2(D)} 
= \langle u_0(\cdot,\o),\phi_n \rangle_{L^2(D)},
 \end{equation*}
i.e. 
\begin{equation}\label{ODE_1}
 v_n(t)+\lambda_nI_t^\alpha  v_n(t)= \langle u_0(\cdot,\o),\phi_n \rangle_{L^2(D)},
\end{equation}
where $v_n(t,\o):=\l u(\cdot,t,\o),\phi_n\rd$. 
By \cite[Theorem 3.25]{KilbasSrivastavaTrujillo:2006}, we see that \eqref{ODE_1} is uniquely solvable.  Hence, with the completeness of $\{\phi_n\}_{n=1}^\infty$ in $H^2(D)\cap H_0^1(D)$, 
 we have the uniqueness of the stochastic weak solution to the initial-boundary value problem \eqref{equ-var}. Next we need to deduce the representation of $v_n$ for proving the existence of the solution to \eqref{equ-var}. 
 
We denote  
 \begin{equation*}
  \tilde v_n(t,\o)=\l u_0(\cdot,\o),\phi_n\rd E_{\alpha,1}(-\lambda_n t^\alpha)
 \end{equation*} 
 and see that $\tilde v_n(0,\o)=\langle u_0(\cdot,\o),\phi_n \rd$. 
In addition, we have 
 \begin{equation}\label{e_1}
  \begin{aligned}
   \lambda_nI_t^\alpha  \tilde v_n(t,\o)&=\lambda_n\l u_0(\cdot,\o),\phi_n\rd\I E_{\alpha,1}(-\lambda_n t^\alpha)\\
   &=\l u_0(\cdot,\o),\phi_n\rd[1-E_{\alpha,1}(-\lambda_nt^\alpha)]\\
   &=\l u_0(\cdot,\o),\phi_n\rd-\tilde v_n(t,\o).
  \end{aligned}
 \end{equation}
 We see that $\tilde v_n$ and $v_n$ both satisfy equation \eqref{ODE_1}. Recalling the uniqueness of  the solution of \eqref{ODE_1}, we have $v_n=\tilde v_n$. 
  
 Recalling that $\mathcal A\phi_n=\lambda_n\phi_n$ and $\{\phi_n\}_{n=1}^\infty$ is an orthonormal basis of $L^2(D)$. For any $\varphi\in H^2(D)\cap H_0^1(D)$, we sum up \eqref{e_1} with the coefficients $\langle \varphi, \phi_n \rangle_{L^2(D)}$ for each $n\in\mathbb N^+$. Then we see that $u(x,t,\o)=\sum_{n=1}^\infty \tilde v_n(t,\o)\phi_n(x)$, which is actually the representation \eqref{solution_initial}, satisfies equation \eqref{equ-var}. Also, with the continuity ensured by Lemma 
 \ref{lemma_regularity_c_initial}, we can conclude that $u$ given in \eqref{solution_initial} is the stochastic weak solution of the initial-boundary value problem \eqref{equ-gov} in the sense of Definition \ref{def-solution}. The proof is complete. 
\end{proof}

With the above results, we can deduce Theorem \ref{thm-initial} straightforwardly.
\begin{proof}[Proof of Theorem \ref{thm-initial}]
 Lemma \ref{lemma_existence_initial} ensures the existence and uniqueness 
  of the stochastic weak solution, 
 while Lemmas \ref{lemma_regularity_c_initial} and \ref{lemma_regularity_lh_initial} 
 yield the regularity results.
\end{proof}

\section{Proof of Theorem \ref{thm-wave-initial}.}\label{sec-wave-initial}
In this section, we consider the case of $\alpha\in (1,2)$ and $f_1=f_2=0$. 

\subsection{Continuous regularity.}
We give the estimate of continuous regularity of the function $u$ defined by \eqref{sol-initial-wave} first. 
\begin{lemma}
\label{lem-wave-initial-continuous}
For any fixed $T>0$, we have $u$ in \eqref{sol-initial-wave} belongs to $ C([0,T];L^2(\Omega;H^1(D)))$ and the following regularity estimate holds 
$$
\|u\|_{C([0,T];L^2(\Omega; H^1(D)))}
\le C \Big( \|u_0\|_{L^2(\Omega;H^1(D))} + T^{1-\alpha/2}\|u_1\|_{L^2(D\times\Omega)} \Big).
$$
\end{lemma}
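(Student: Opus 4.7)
The plan is to exploit the characterization $\|v\|_{H^1(D)}^2 \simeq \|\mathcal{A}^{1/2}v\|_{L^2(D)}^2 = \sum_{n=1}^\infty \lambda_n \langle v,\phi_n\rangle_{L^2(D)}^2$ on the orthonormal expansion \eqref{sol-initial-wave}. Using $(a+b)^2 \le 2a^2 + 2b^2$, I would first derive a pointwise (in $(t,\omega)$) estimate
\begin{equation*}
\|u(\cdot,t,\omega)\|_{H^1(D)}^2 \le C \sum_{n=1}^\infty \lambda_n \langle u_0(\cdot,\omega),\phi_n\rangle^2 E_{\alpha,1}^2(-\lambda_n t^\alpha) + C t^2 \sum_{n=1}^\infty \lambda_n \langle u_1(\cdot,\omega),\phi_n\rangle^2 E_{\alpha,2}^2(-\lambda_n t^\alpha).
\end{equation*}

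Next I would plug in the bounds from Lemma \ref{mittag_bound}: the factor $|E_{\alpha,1}(-\lambda_n t^\alpha)| \le C$ reduces the first sum to $C\|u_0(\cdot,\omega)\|_{H^1(D)}^2$. For the second sum, the key algebraic step is to use $|E_{\alpha,2}(-\lambda_n t^\alpha)| \le C/(1+\lambda_n t^\alpha)$ and rewrite
\begin{equation*}
\lambda_n t^2 E_{\alpha,2}^2(-\lambda_n t^\alpha) \le \frac{C\lambda_n t^2}{(1+\lambda_n t^\alpha)^2} = C t^{2-\alpha} \cdot \frac{\lambda_n t^\alpha}{(1+\lambda_n t^\alpha)^2} \le C t^{2-\alpha},
\end{equation*}
since $s/(1+s)^2 \le 1/4$ for $s\ge 0$. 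This sharp extraction of a $t^{2-\alpha}$ factor produces a bound of $C t^{2-\alpha}\|u_1(\cdot,\omega)\|_{L^2(D)}^2$ on the second sum. Taking expectation in $\omega$ and the supremum for $t\in[0,T]$ then gives the stated estimate after a square root.

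For the continuity $u \in C([0,T];L^2(\Omega;H^1(D)))$, I would mimic the argument in Lemma \ref{lemma_regularity_c_initial}: express $\|u(\cdot,t+h,\cdot)-u(\cdot,t,\cdot)\|_{L^2(\Omega;H^1(D))}^2$ as $\sum_n S_n(h)$ where each $S_n(h)$ involves differences of $E_{\alpha,1}$ and of $\tau \mapsto \tau E_{\alpha,2}(-\lambda_n \tau^\alpha)$ at the two nearby arguments. Each $S_n$ is continuous in $h$ by continuity of the Mittag-Leffler functions, while the pointwise bound above supplies a summable majorant $C[\lambda_n \langle u_0,\phi_n\rangle^2 + (T+1)^{2-\alpha}\langle u_1,\phi_n\rangle^2]$ uniformly for $t, t+h$ in any bounded interval. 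The uniform limit theorem then yields continuity of the series as a map into $L^2(\Omega;H^1(D))$; continuity at $t=0$ is automatic since $E_{\alpha,1}(0)=1$ gives $u(\cdot,0,\omega)=u_0(\cdot,\omega)$ and the $u_1$ contribution carries the vanishing prefactor $t^{1-\alpha/2}$.

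The main obstacle is the sharp bound on $\lambda_n t^2 E_{\alpha,2}^2(-\lambda_n t^\alpha)$: a crude application of $|E_{\alpha,2}| \le C$ would yield $t^2$ rather than $t^{2-\alpha}$ and hence destroy the claimed $T^{1-\alpha/2}$ scaling. Apart from this rearrangement trick, the remaining steps are routine Parseval-type manipulations plus a standard dominated-convergence continuity argument.
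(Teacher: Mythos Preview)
Your proposal is correct, and the norm estimate is carried out exactly as in the paper: the same rearrangement $\lambda_n t^2/(1+\lambda_n t^\alpha)^2 = t^{2-\alpha}\cdot \lambda_n t^\alpha/(1+\lambda_n t^\alpha)^2 \le C t^{2-\alpha}$ is the key step there too.

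For the continuity of the $u_1$-piece your route differs slightly from the paper's. You bound the difference term crudely by the triangle inequality to obtain the $n$-independent majorant $C(T+1)^{2-\alpha}\,\mathbb E[\langle u_1,\phi_n\rangle^2]$ and then invoke the Weierstrass M-test plus continuity of each summand. The paper instead uses Lemma~\oldref{lem-ml-int} to write
\[
(t+h)E_{\alpha,2}(-\lambda_n(t+h)^\alpha) - tE_{\alpha,2}(-\lambda_n t^\alpha)=\int_t^{t+h} E_{\alpha,1}(-\lambda_n\tau^\alpha)\,d\tau,
\]
and then estimates the integral directly to get $|\cdot|\le C\lambda_n^{-1/2}|h|^{1-\alpha/2}$, which yields an explicit H\"older modulus in $h$ rather than mere continuity. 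Your argument is more elementary and perfectly sufficient for the stated lemma; the paper's extra work buys a quantitative rate that is not actually needed here.
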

\begin{proof}
Fixing $t\ge 0$ and a sufficiently small $h\in [-t,\infty)$, in view of the inequality $\|u\|_{H^1(D)}\le C\|u\|_{D(\mathcal A^{1/2})}$, we have 
 \begin{align*}
 &\|u(\cdot,t+h,\cdot)-u(\cdot,t,\cdot)\|^2_{L^2(\Omega;H^1(D))}\\
 &=\E\left[\|u(\cdot,t+h,\o)-u(\cdot,t,\o)\|^2_{H^1(D)}\right]\\
 &\le C\sum_{n=1}^\infty \lambda_n \E\Big[\l u_0(\cdot,\o),\phi_n\rd^2 \Big]\ \left[E_{\alpha,1}
 (-\lambda_n (t+h)^\alpha) -E_{\alpha,1}(-\lambda_n t^\alpha)\right]^2\\
&\quad+C\sum_{n=1}^\infty \lambda_n\E\Big[\l u_1(\cdot,\o),\phi_n\rd^2\Big]\ \big[ (t+h)E_{\alpha,2}
 (-\lambda_n (t+h)^\alpha) - t E_{\alpha,2}(-\lambda_n t^\alpha)\big]^2 .
  \end{align*}
Similar to the argument in Section \ref{sec-subdiff-conti},  we first see that
\begin{equation*}
\begin{aligned}
 &\lim_{h\to 0} \sum_{n=1}^\infty \lambda_n\E\Big[\l u_0(\cdot,\o),\phi_n\rd^2\Big]\ \big[E_{\alpha,1}
 (-\lambda_n (t+h)^\alpha) -E_{\alpha,1}(-\lambda_n t^\alpha)\big]^2 \\
&=\sum_{n=1}^\infty \E\Big[\l \mathcal A^{1/2}u_0(\cdot,\o),\phi_n\rd^2\Big]
\ \lim_{h\to 0}\big[E_{\alpha,1} (-\lambda_n (t+h)^\alpha) 
-E_{\alpha,1}(-\lambda_n t^\alpha)\big]^2=0.
\end{aligned}
\end{equation*}
Furthermore, from Lemma \ref{lem-ml-int} with $\beta=1$, we find that
$$(t+h)E_{\alpha,2} (-\lambda_n (t+h)^\alpha) - t E_{\alpha,2}(-\lambda_n t^\alpha) 
= \int_t^{t+h} E_{\alpha,1}(-\lambda_n \tau^\alpha)\ d\tau.$$
From the straightforward computation, we have  
\begin{equation}
\label{esti-useful1}
\frac{t^\beta}{1+t} \le C\ \text{for}\ t>0\ \text{and}\ \beta\in[0,1],
\end{equation}
and
\begin{equation}
\label{esti-useful2}
|a^p - b^p| \le |a-b|^p\ \text{for}\ a,b>0\ \text{and}\ p\in(0,1).
\end{equation}
Then we see that
\begin{align*}
&\big|(t+h)E_{\alpha,2} (-\lambda_n (t+h)^\alpha) - t E_{\alpha,2}(-\lambda_n t^\alpha)  \big|\\
&\le \Big|\int_t^{t+h} \frac{C(\lambda_n \tau^\alpha)^{1/2}}{1+\lambda_n \tau^\alpha} (\lambda_n \tau^\alpha)^{-1/2}\ d\tau \Big|
\le \frac{C}{\sqrt{\lambda_n}} \big| (t+h)^{1-\alpha/2} - t^{1 - \alpha/2} \big|
\le  \frac{C}{\sqrt{\lambda_n}} |h|^{1-\alpha/2}, 
\end{align*}
in view of the estimate \eqref{eq:ML_decay_rate} of the Mittag-Leffler functions. Furthermore, we conclude from the condition $u_1\in L^2(D\times\Omega)$ that the series 
$$
\sum_{n=1}^\infty \lambda_n\E\big[\l u_1(\cdot,\o),\phi_n\rd^2 \big]\ \big[ (t+h)E_{\alpha,2} (-\lambda_n (t+h)^\alpha) - t E_{\alpha,2}(-\lambda_n t^\alpha)\big]^2
$$
is uniformly convergent. Hence
\begin{align*}
&\lim_{h\to0} \sum_{n=1}^\infty \lambda_n\E\big[\l u_1(\cdot,\o),\phi_n\rd^2 \big]\ \big[ (t+h)E_{\alpha,2}
 (-\lambda_n (t+h)^\alpha) - t E_{\alpha,2}(-\lambda_n t^\alpha)\big]^2 \\
&=\sum_{n=1}^\infty \E \big[\l u_1(\cdot,\o),\phi_n\rd^2 \big]\ \lim_{h\to0} \lambda_n\big[ (t+h)E_{\alpha,2}
 (-\lambda_n (t+h)^\alpha) - t E_{\alpha,2}(-\lambda_n t^\alpha)\big]^2 = 0.
\end{align*}
This gives that the solution $u:[0,T]\to L^2(\Omega;H^1(D))$ is continuous. Moreover, from the fact that $D(\A^{1/2})=H_0^1(D)$, we have for $t\in[0,\infty)$, 
\begin{equation*}
\begin{aligned}
 &\|u(\cdot,t,\cdot)\|^2_{L^2(\Omega;H^1(D))} \\
 &\le C\int_\Omega\sum_{n=1}^\infty\l \mathcal A^{1/2}u_0(\cdot,\o),\phi_n\rd^2
 \ E^2_{\alpha,1}(-\lambda_n t^\alpha)\ dP(\omega)
 \\
 &\quad+ C\int_\Omega\sum_{n=1}^\infty \lambda_n\l u_1(\cdot,\o),\phi_n\rd^2
 \ t^2 E^2_{\alpha,2}(-\lambda_n t^\alpha)\ dP(\omega)\\
 &\le C\int_\Omega\sum_{n=1}^\infty \l \mathcal A^{1/2}u_0(\cdot,\o),\phi_n\rd^2 \ dP(\omega)
+ C\int_\Omega\sum_{n=1}^\infty \l u_1(\cdot,\o),\phi_n\rd^2 \frac{t^{2-\alpha} (\lambda_n t^\alpha)}{(1+\lambda_nt^\alpha)^2}\ dP(\omega)\\
 &\le C\|u_0\|^2_{L^2(\Omega; H^1(D))} + Ct^{2-\alpha}\|u_1\|_{L^2(D\times\Omega)}^2.
 \end{aligned}
\end{equation*}
In the last inequality the estimate \eqref{esti-useful1} is used. This yields
$$ 
\|u\|_{C([0,T];L^2(\Omega; H^1(D)))}^2  
 \le C\left(\|u_0\|_{L^2(\Omega; H^1(D))}^2 + T^{2-\alpha}\|u_1\|_{L^2(D\times\Omega)}^2 \right).
$$  
The proof of the lemma is complete.
\end{proof}

In the case of $T=\infty$, by slightly changing the above argument, we can show the next lemma. 
\begin{lemma}
\label{lem-wave-initial-infty}
The solution $u$ defined in \eqref{sol-initial-wave} belongs to $C([0,\infty);L^2(D\times\Omega))$ and
$$
\|u\|_{C([0,\infty);L^2(D\times\Omega))}
\le C \left( \|u_0\|_{L^2(D\times\Omega)} + \|u_1\|_{L^2(D\times\Omega)} \right).
$$
\end{lemma}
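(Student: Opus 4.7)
The plan is to mirror the two-step structure used in the proof of Lemma~\ref{lem-wave-initial-continuous} (a pointwise estimate together with continuity obtained from uniform convergence of the spectral series), but working in the weaker $L^2(D\times\Omega)$ norm instead of $L^2(\Omega;H^1(D))$. The benefit of dropping the $H^1$ factor is that the bound can be made uniform on the unbounded interval $[0,\infty)$ rather than blowing up like $T^{1-\alpha/2}$; the cost is that the estimate $|t E_{\alpha,2}(-\lambda_n t^\alpha)|\le Ct^{1-\alpha/2}\lambda_n^{-1/2}$ used previously is no longer enough, so the essential new ingredient is a sup-in-$t$ control of $tE_{\alpha,2}(-\lambda_n t^\alpha)$ that genuinely exploits $\alpha\in(1,2)$.

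For the pointwise bound, expand \eqref{sol-initial-wave} in the orthonormal basis $\{\phi_n\}$, apply $(a+b)^2\le 2a^2+2b^2$, use Parseval and take expectations to get
$$
\|u(\cdot,t,\cdot)\|^2_{L^2(D\times\Omega)}
\le 2\sum_{n=1}^\infty \E\bigl[\l u_0,\phi_n\rd^2\bigr] E_{\alpha,1}^2(-\lambda_n t^\alpha)
+ 2\sum_{n=1}^\infty \E\bigl[\l u_1,\phi_n\rd^2\bigr] \bigl(tE_{\alpha,2}(-\lambda_n t^\alpha)\bigr)^2.
$$
The first sum is controlled by $C\|u_0\|^2_{L^2(D\times\Omega)}$ using $|E_{\alpha,1}(-s)|\le C$ from Lemma~\ref{mittag_bound}. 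For the second sum, the same lemma combined with the rescaling $s=\lambda_n t^\alpha$ yields
$$
|tE_{\alpha,2}(-\lambda_n t^\alpha)|
\le \frac{Ct}{1+\lambda_n t^\alpha}
= C\lambda_n^{-1/\alpha}\,\frac{s^{1/\alpha}}{1+s}.
$$
Since $\alpha>1$ forces $1/\alpha<1$, the function $s\mapsto s^{1/\alpha}/(1+s)$ is bounded on $[0,\infty)$, so one obtains the key uniform estimate
$$
\sup_{t\ge 0,\ n\in\mathbb N^+} |tE_{\alpha,2}(-\lambda_n t^\alpha)| \le C\lambda_1^{-1/\alpha},
$$
which gives $\|u(\cdot,t,\cdot)\|^2_{L^2(D\times\Omega)}\le C\bigl(\|u_0\|^2_{L^2(D\times\Omega)}+\|u_1\|^2_{L^2(D\times\Omega)}\bigr)$ uniformly in $t\ge 0$.

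For continuity in $t$ on $[0,\infty)$, I would repeat the Weierstrass-type argument of Lemmas~\ref{lemma_regularity_c_initial} and~\ref{lem-wave-initial-continuous}. Fix $t\ge 0$ and $h\in[-t,\infty)$ and split
$$
\|u(\cdot,t+h,\cdot)-u(\cdot,t,\cdot)\|^2_{L^2(D\times\Omega)}
\le \sum_{n=1}^\infty S_n^{(0)}(h)+\sum_{n=1}^\infty S_n^{(1)}(h),
$$
with $S_n^{(0)}$ and $S_n^{(1)}$ collecting the $u_0$- and $u_1$-contributions. Each $S_n^{(j)}(h)$ is continuous in $h$, and using Lemma~\ref{mittag_bound} together with the supremum bound just derived, $|S_n^{(0)}(h)|\le 4C^2\E[\l u_0,\phi_n\rd^2]$ and $|S_n^{(1)}(h)|\le 4C^2\lambda_1^{-2/\alpha}\E[\l u_1,\phi_n\rd^2]$. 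Both majorants sum by Assumption~\ref{assumption}, so each series converges uniformly in $h$ by the $M$-test, hence is continuous; evaluating at $h=0$ gives the required vanishing limit.

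The main (and essentially only) obstacle is the uniform-in-$t$ supremum bound on $tE_{\alpha,2}(-\lambda_n t^\alpha)$, which does not follow directly from Lemma~\ref{mittag_bound} applied naively but falls out cleanly once the factor $t$ is absorbed into the argument via the substitution $s=\lambda_n t^\alpha$; the remainder of the proof is a transcription of the spectral/uniform-convergence arguments of the preceding lemmas with the $H^1$-gradient factor $\lambda_n$ removed.
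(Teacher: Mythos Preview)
Your proposal is correct and follows essentially the same route as the paper. The paper's proof only displays the pointwise norm bound (writing $\frac{t^2}{(1+\lambda_n t^\alpha)^2}$ and absorbing it into a constant, which is precisely your substitution $s=\lambda_n t^\alpha$ in disguise) and defers continuity to the phrase ``slightly changing the above argument''; you have simply made both steps explicit.
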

\begin{proof}
The direct calculation gives that for $t\in[0,\infty)$,
\begin{equation*}
\begin{aligned}
 &\|u(\cdot,t,\cdot)\|^2_{L^2(D\times\Omega)}\\
 &\le C\int_\Omega\sum_{n=1}^\infty \l u_0(\cdot,\o),\phi_n\rd^2 \ dP(\omega)
+ C\int_\Omega\sum_{n=1}^\infty \l u_1(\cdot,\o),\phi_n\rd^2 \frac{t^2 }{(1+\lambda_nt^\alpha)^2}\ dP(\omega)\\
 &\le C\|u_0\|^2_{L^2(D\times\Omega)} + C\|u_1\|_{L^2(D\times\Omega)}^2,
 \end{aligned}
\end{equation*}
which yields 
$$ 
\|u\|_{C([0,\infty);L^2(D\times\Omega))}^2  
 \le C\left(\|u_0\|_{L^2(D\times\Omega)}^2 + \|u_1\|_{L^2(D\times\Omega)}^2 \right).
$$  
The proof is complete. 
\end{proof}

\subsection{$L^2$-regularity.}
Similar to the argument in the above subsection, we can deduce the $L^2$-regularity of the solution $u$ given by \eqref{sol-initial-wave}.
\begin{lemma}
\label{lem-wave-initial-L2}
Under Assumption \ref{assumption}, $u$ in \eqref{sol-initial-wave} belongs to $L^2((0,T)\times\Omega;H^{2\gamma}(D))$ with $\gamma\in [1/2,\frac{\alpha+1}{2\alpha})$ and satisfies 
$$
\|u\|_{L^2((0,T)\times\Omega;H^{2\gamma}(D))}
\le C\Big(T^{1+\alpha-2\alpha\gamma}\|u_0\|_{L^2(\Omega;H^1(D))}^2 + T^{3-2\alpha\gamma}\|u_1\|_{L^2(D\times\Omega)}^2\Big).
$$
\end{lemma}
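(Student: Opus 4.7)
The plan is to mimic the strategy used for Lemma \ref{lemma_regularity_lh_initial} and Lemma \ref{lem-wave-initial-continuous}: I would bound $\|u(\cdot,t,\o)\|_{D(\mathcal A^\gamma)}^2$ pointwise in $t$ via the Mittag-Leffler decay estimate \eqref{eq:ML_decay_rate}, and then integrate over $(0,T)\times\Omega$. Exploiting $\|\cdot\|_{H^{2\gamma}(D)}\le C\|\cdot\|_{D(\mathcal A^\gamma)}$, orthonormality of $\{\phi_n\}$, and the elementary inequality $(a+b)^2\le 2(a^2+b^2)$, I would reduce the task to estimating
\[
I_0(t,\o):=\sum_{n=1}^\infty \lambda_n^{2\gamma}\l u_0(\cdot,\o),\phi_n\rd^2 E_{\alpha,1}^2(-\lambda_n t^\alpha),\qquad
I_1(t,\o):=\sum_{n=1}^\infty \lambda_n^{2\gamma} t^2 \l u_1(\cdot,\o),\phi_n\rd^2 E_{\alpha,2}^2(-\lambda_n t^\alpha),
\]
integrated against $dt\,dP(\o)$.

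For $I_0$, since $\gamma\ge 1/2$, I would peel off one power of $\lambda_n$ and rewrite
\[
\lambda_n^{2\gamma}E_{\alpha,1}^2(-\lambda_n t^\alpha)\;\le\;C\,\lambda_n\,t^{-\alpha(2\gamma-1)}\,\frac{(\lambda_n t^\alpha)^{2\gamma-1}}{(1+\lambda_n t^\alpha)^2}.
\]
The hypothesis $\gamma<(\alpha+1)/(2\alpha)<1$ (recall $\alpha\in(1,2)$) forces $2\gamma-1\in[0,1)$, so the quotient on the right is uniformly bounded in $n$. Together with $D(\mathcal A^{1/2})=H_0^1(D)$, this produces $I_0(t,\o)\le Ct^{-\alpha(2\gamma-1)}\|u_0(\cdot,\o)\|_{H^1(D)}^2$, and $\int_0^T t^{-\alpha(2\gamma-1)}\,dt\le CT^{1+\alpha-2\alpha\gamma}$, the integrability being exactly equivalent to $\gamma<(\alpha+1)/(2\alpha)$. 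For $I_1$ I would factor the other way,
\[
\lambda_n^{2\gamma} t^2 E_{\alpha,2}^2(-\lambda_n t^\alpha)\;\le\;C\,t^{2-2\alpha\gamma}\,\frac{(\lambda_n t^\alpha)^{2\gamma}}{(1+\lambda_n t^\alpha)^2}\;\le\;C\,t^{2-2\alpha\gamma},
\]
using $2\gamma\le 2$; then $\int_0^T t^{2-2\alpha\gamma}\,dt\le CT^{3-2\alpha\gamma}$, which is finite since $(\alpha+1)/(2\alpha)<3/(2\alpha)$.

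Summing the two contributions and applying Parseval's identity in $\omega$ and then Fubini in $(t,\o)$ will deliver the stated inequality. The only delicate point is the bookkeeping of the factorization of $\lambda_n^{2\gamma}$: one must pair enough powers of $\lambda_n$ with the denominator $(1+\lambda_n t^\alpha)^2$ so that the quotient stays uniformly bounded in $n$, while the surplus power of $t$ remains locally integrable at the origin. The balance is tight for the $u_0$ term, and the exclusion of the endpoint $\gamma=(\alpha+1)/(2\alpha)$ in the hypothesis reflects exactly the borderline of $L^1$-integrability of $t^{-\alpha(2\gamma-1)}$ near $t=0$; I expect this exponent accounting to be the main (and essentially only) subtlety of the proof.
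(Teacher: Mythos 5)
Your proposal is correct and follows essentially the same route as the paper: bound $\|u(\cdot,t,\o)\|_{D(\mathcal A^\gamma)}^2$ termwise using the Mittag-Leffler decay, factor $\lambda_n^{2\gamma}$ so that $(\lambda_n t^\alpha)^{2\gamma-1}/(1+\lambda_n t^\alpha)^2$ (resp. $(\lambda_n t^\alpha)^{2\gamma}/(1+\lambda_n t^\alpha)^2$) is uniformly bounded, and integrate the surplus powers $t^{\alpha-2\alpha\gamma}$ and $t^{2-2\alpha\gamma}$ over $(0,T)$. Your exponent bookkeeping, including the identification of $\gamma<(\alpha+1)/(2\alpha)$ as exactly the integrability threshold for the $u_0$ term, matches the paper's argument.
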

\begin{proof}
Multiplying $\A^{\gamma}$ on both sides of \eqref{sol-initial-wave}, from the estimate \eqref{eq:ML_decay_rate} of the Mittag-Leffler function, we arrive at
$$
\|u(\cdot,t,\o)\|_{D(\A^\gamma)}^2 
\le C\sum_{n=1}^\infty \Big(\frac{\lambda_n^\gamma\l u_0(\cdot,\o),\phi_n\rd }{1+\lambda_n t^\alpha}\Big)^2  
+ Ct^2\sum_{n=1}^\infty \Big(\frac{\lambda_n^\gamma\l u_1(\cdot,\o),\phi_n\rd }{1+\lambda_n t^\alpha}\Big)^2.
$$
By \eqref{esti-useful1},  the choice of $\gamma\in [1/2,\frac{\alpha+1}{2\alpha})$ and the proof of the above theorem, we can deduce that
\begin{align*}
\|u(\cdot,t,\o)\|_{D(\A^\gamma)}^2 
&\le Ct^{\alpha-2\alpha\gamma} \sum_{n=1}^\infty \lambda_n \l u_0(\cdot,\o),\phi_n\rd ^2\Big(\frac{(\lambda_nt^\alpha)^{\gamma-1/2}}{1+\lambda_n t^\alpha}\Big)^2  
\\
&\quad+ Ct^{2-2\alpha\gamma} \sum_{n=1}^\infty \l u_1(\cdot,\o),\phi_n\rd^2 \Big(\frac{(\lambda_nt^\alpha)^{\gamma} }{1+\lambda_n t^\alpha}\Big)^2
\\
&\le Ct^{\alpha-2\alpha\gamma}\|u_0\|_{D(\A^{1/2})}^2
+  Ct^{2-2\alpha\gamma}\|u_1\|_{L^2(D)}^2.
\end{align*}
Then it follows that
$$
\int_0^T \|u(\cdot,t,\o)\|_{D(\A^\gamma)}^2\ dt
\le C\Big(\frac{T^{1+\alpha-2\alpha\gamma}}{1+\alpha-2\alpha\gamma}\|u_0\|^2_{H^1(D)}
+ \frac{T^{3-2\alpha\gamma}}{3-2\alpha\gamma}\|u_1\|_{L^2(D)}^2\Big).
$$
By virtue of the relation $D(\mathcal A^{\gamma}) \subset H^{2\gamma}(D)$, we finally obtain
$$
\|u\|_{L^2((0,T)\times\Omega;H^{2\gamma}(D))}^2
\le C\Big(T^{1+\alpha-2\alpha\gamma}\|u_0\|^2_{L^2(\Omega;H^1(D))} + T^{3-2\alpha\gamma}\|u_1\|_{L^2(D\times\Omega)}^2\Big).
$$
This completes the proof of the lemma.
\end{proof}

\subsection{$L^1$-regularity.}
Unfortunately, using the above argument in Lemma \ref{lem-wave-initial-L2}, we cannot show the $L^2(0,T;H^2(D))$ 
regularity of the solution $u$ in \eqref{sol-initial-wave} only provided that $u_0\in L^2(\Omega;H^1(D))$ and $u_1\in L^2(D\times\Omega)$. This is due to the lack of the monotonic property of the Mittag-Leffler functions $E_{\alpha,\beta}$ in the case of $\alpha>1$.  
However, with these conditions we can deduce $u(\cdot,\cdot,\omega)\in L^1(0,T;H^2(D))$ for a.e.   $\o\in\Omega$.  
\begin{lemma}
\label{lem-wave-initial-L1}
Let $u_0,u_1$ satisfy Assumption \ref{assumption}, then the function $u$ in \eqref{sol-initial-wave} belongs to $L^2(\Omega;L^1(0,T;H^2(D)))$ and it holds that 
$$\|u\|_{L^2(\Omega; L^1(0,T;H^2(D)))} 
\le C\Big( T^{1-\alpha/2}\|u_0\|_{L^2(\Omega;H^1(D))} + T^{2-\alpha}\|u_1\|_{L^2(D\times\Omega)} \Big).$$
\end{lemma}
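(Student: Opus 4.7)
The strategy is to derive a pathwise pointwise-in-$t$ bound for $\|u(\cdot,t,\o)\|_{H^2(D)}$ and then integrate in $t$, using that the resulting negative powers of $t$ are integrable on $(0,T)$ in the $L^1$ sense although not in $L^2$. Splitting $u=u^{(0)}+u^{(1)}$ into the contributions of $u_0$ and $u_1$ in \eqref{sol-initial-wave}, applying $\A$ termwise, and using the norm equivalence $\|v\|_{H^2(D)}\le C\|\A v\|_{L^2(D)}$ on $D(\A)=H^2(D)\cap H_0^1(D)$, Parseval's identity gives
\begin{equation*}
\|u^{(0)}(\cdot,t,\o)\|_{H^2(D)}^2
\le C\sum_{n=1}^\infty \lambda_n^2\, \l u_0(\cdot,\o),\phi_n\rd^2 E_{\alpha,1}^2(-\lambda_n t^\alpha).
\end{equation*}
Inserting the decay $|E_{\alpha,1}(-\lambda_n t^\alpha)|\le C/(1+\lambda_n t^\alpha)$ from Lemma \ref{mittag_bound} and using $\lambda_n t^\alpha/(1+\lambda_n t^\alpha)^2\le 1$ to factor out $t^{-\alpha}$ yields $\|u^{(0)}(\cdot,t,\o)\|_{H^2(D)}^2\le Ct^{-\alpha}\|u_0(\cdot,\o)\|_{H^1(D)}^2$. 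The same computation for the $u_1$-term, now with $(\lambda_n t^\alpha)^2/(1+\lambda_n t^\alpha)^2\le 1$ extracting $t^{2-2\alpha}$, yields $\|u^{(1)}(\cdot,t,\o)\|_{H^2(D)}^2\le Ct^{2-2\alpha}\|u_1(\cdot,\o)\|_{L^2(D)}^2$. Combined with the triangle inequality, this produces the pathwise bound
\begin{equation*}
\|u(\cdot,t,\o)\|_{H^2(D)}\le C\bigl(t^{-\alpha/2}\|u_0(\cdot,\o)\|_{H^1(D)}+t^{1-\alpha}\|u_1(\cdot,\o)\|_{L^2(D)}\bigr).
\end{equation*}

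Since $\alpha\in(1,2)$, both exponents $-\alpha/2$ and $1-\alpha$ lie strictly above $-1$, so the two singular factors are integrable on $(0,T)$ with $\int_0^T t^{-\alpha/2}\,dt\le CT^{1-\alpha/2}$ and $\int_0^T t^{1-\alpha}\,dt\le CT^{2-\alpha}$. Integrating the pointwise bound in $t$ gives
\begin{equation*}
\|u(\cdot,\cdot,\o)\|_{L^1(0,T;H^2(D))}\le C T^{1-\alpha/2}\|u_0(\cdot,\o)\|_{H^1(D)}+C T^{2-\alpha}\|u_1(\cdot,\o)\|_{L^2(D)}.
\end{equation*}
Squaring via $(a+b)^2\le 2a^2+2b^2$, taking expectation, and extracting a square root then produces the claim.

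The only point worth flagging, and one already foreshadowed by the authors just before the statement, is why this route yields $L^1$-in-$t$ but not $L^2$-in-$t$ regularity: the pathwise bound $\|u(\cdot,t,\o)\|_{H^2(D)}\le C t^{-\alpha/2}\|u_0(\cdot,\o)\|_{H^1(D)}$ is essentially optimal under the Mittag-Leffler decay of Lemma \ref{mittag_bound}, and $t^{-\alpha/2}$ ceases to be square-integrable near $0$ once $\alpha\ge 1$. Since the complete monotonicity of $E_{\alpha,\beta}$ supplied by Lemma \ref{mittag_monotone} is available only for $\alpha<1$, no finer cancellation is accessible here, so the analytical content of the lemma is precisely the $L^1$-in-$t$ relaxation and the rest is exponent bookkeeping.
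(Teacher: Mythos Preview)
Your proof is correct and follows essentially the same approach as the paper: derive the pathwise pointwise-in-$t$ bound $\|u(\cdot,t,\o)\|_{H^2(D)}^2\le C\bigl(t^{-\alpha}\|u_0(\cdot,\o)\|_{H^1(D)}^2+t^{2-2\alpha}\|u_1(\cdot,\o)\|_{L^2(D)}^2\bigr)$ via the Mittag-Leffler decay of Lemma~\ref{mittag_bound} and the factoring tricks $\lambda_n t^\alpha/(1+\lambda_n t^\alpha)^2\le1$ and $(\lambda_n t^\alpha)^2/(1+\lambda_n t^\alpha)^2\le1$, then integrate in $t$ and take expectation. The only cosmetic difference is that you explicitly split $u=u^{(0)}+u^{(1)}$ whereas the paper carries both terms together inside a single $\|u(\cdot,t,\o)\|_{D(\A)}^2$ computation.
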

\begin{proof}
Multiplying $\A$ on both sides of \eqref{sol-initial-wave} derives that 
\begin{align*}
&\|u(\cdot,t,\o)\|_{D(\A)}^2\\
&= \sum_{n=1}^\infty \lambda_n^2\ \Big|\l u_0(\cdot,\o),\phi_n\rd  E_{\alpha,1}(-\lambda_nt^\alpha) 
+ \l u_1(\cdot,\o),\phi_n\rd  tE_{\alpha,2}(-\lambda_nt^\alpha) \Big|^2\\
&\le C\sum_{n=1}^\infty \l u_0(\cdot,\o),\phi_n\rd ^2 \Big( \frac{\lambda_n}{1+\lambda_nt^\alpha}\Big)^2 + Ct^2 \sum_{n=1}^\infty \l u_1(\cdot,\o),\phi_n\rd ^2 \Big( \frac{\lambda_n}{1+\lambda_nt^\alpha}\Big)^2\\
&=Ct^{-\alpha}\sum_{n=1}^\infty \lambda_n \l u_0(\cdot,\o),\phi_n \rd ^2 \frac{\lambda_nt^\alpha}{(1+\lambda_nt^\alpha)^2 }+ Ct^{2-2\alpha} \sum_{n=1}^\infty \l u_1(\cdot,\o),\phi_n \rd ^2 \Big( \frac{\lambda_nt^\alpha}{1+\lambda_nt^\alpha}\Big)^2\\
&\le C\|u_0(\cdot,\o)\|^2_{D(\A^{1/2})} t^{-\alpha} + C \|u_1(\cdot,\o)\|_{L^2(D)}^2t^{2-2\alpha},
\end{align*}
which gives 
\begin{equation*}
\|u(\cdot,\cdot,\o)\|_{L^1(0,T;H^2(D))}  \le C\|u_0(\cdot,\o)\|_{D(\A^{1/2})} T^{1-\alpha/2} + C \|u_1(\cdot,\o)\|_{L^2(D)}T^{2-\alpha}.
\end{equation*}
Hence,  we have 
$$
\|u\|_{L^2(\Omega; L^1(0,T;H^2(D)))}^2 
\le C\Big( T^{2-\alpha}\|u_0\|_{L^2(\Omega;H^1(D))}^2 + T^{4-2\alpha}\|u_1\|_{L^2(D\times\Omega)}^2 \Big), 
$$
in view of the fact that $D(\mathcal A^{1/2})=H_0^1(D)$. This completes the proof. 
\end{proof}

\subsection{Existence and uniqueness.} 
Finally we show the existence and uniqueness of solution \eqref{sol-initial-wave}. 
\begin{lemma}
\label{lem-wave-initial-existence}
 Under Definition \ref{def-solution}, the function $u$ in \eqref{sol-initial-wave} is the unique stochastic weak solution of equation \eqref{equ-gov}.
\end{lemma}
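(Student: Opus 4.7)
\noindent\textbf{Proof proposal for Lemma \ref{lem-wave-initial-existence}.}

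The plan is to mirror the strategy of Lemma \ref{lemma_existence_initial}, reducing the variational identity \eqref{equ-var} to a family of scalar Volterra equations indexed by the Dirichlet eigensystem $\{\lambda_n,\phi_n\}_{n=1}^\infty$. Suppose $u$ is any stochastic weak solution in the sense of Definition \ref{def-solution}. Setting $\varphi = \phi_n$ and recalling $\mathcal A\phi_n=\lambda_n\phi_n$, I obtain, for $P$-a.e. $\o\in\Omega$ and every $t\in(0,\infty)$,
\begin{equation*}
v_n(t,\o) + \lambda_n I_t^\alpha v_n(t,\o) = \l u_0(\cdot,\o),\phi_n\rd + t\,\l u_1(\cdot,\o),\phi_n\rd,
\end{equation*}
where $v_n(t,\o):=\l u(\cdot,t,\o),\phi_n\rd$. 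This is a linear Volterra integral equation of the second kind in $t$ with weakly singular kernel, so by e.g. \cite[Theorem 3.25]{KilbasSrivastavaTrujillo:2006} it admits a unique continuous solution for each $\o$. Since $\{\phi_n\}_{n=1}^\infty$ is a complete orthonormal basis of $L^2(D)$, uniqueness of $u$ follows at once.

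For existence, I define
\begin{equation*}
\tilde v_n(t,\o) = \l u_0(\cdot,\o),\phi_n\rd E_{\alpha,1}(-\lambda_n t^\alpha)
+ \l u_1(\cdot,\o),\phi_n\rd\, t\, E_{\alpha,2}(-\lambda_n t^\alpha),
\end{equation*}
and verify that each $\tilde v_n$ solves the above Volterra equation. The contribution from $u_0$ is already handled by the identity \eqref{e_1} in Lemma \ref{lemma_existence_initial}, namely $\lambda_n I_t^\alpha E_{\alpha,1}(-\lambda_n t^\alpha) = 1 - E_{\alpha,1}(-\lambda_n t^\alpha)$. For the $u_1$-contribution, I expand $tE_{\alpha,2}(-\lambda_n t^\alpha)$ as a power series and apply $I_t^\alpha$ term-by-term (using $I_t^\alpha t^{\alpha k+1} = \Gamma(\alpha k+2)/\Gamma(\alpha k+\alpha+2)\,t^{\alpha k+\alpha+1}$) to get
\begin{equation*}
\lambda_n I_t^\alpha\bigl(tE_{\alpha,2}(-\lambda_n t^\alpha)\bigr) = t - tE_{\alpha,2}(-\lambda_n t^\alpha),
\end{equation*}
which matches exactly the right-hand side structure. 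Consequently $v_n = \tilde v_n$, so $u$ defined by \eqref{sol-initial-wave} is the unique candidate.

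It then remains to lift this term-by-term computation to the genuine variational identity by summing against $\l\varphi,\phi_n\rd$ for $\varphi\in H^2(D)\cap H_0^1(D)$. This requires interchanging the infinite sum with $I_t^\alpha$ and with the $L^2$-pairing. The main obstacle is justifying these interchanges, which I expect to be the technical core of the proof: the regularity bounds already established in Lemmas \ref{lem-wave-initial-continuous}, \ref{lem-wave-initial-infty}, \ref{lem-wave-initial-L2} and \ref{lem-wave-initial-L1} provide uniform majorants of the partial sums in the appropriate Bochner spaces (so that Fubini and dominated convergence apply pathwise in $\o$), while the temporal continuity from Lemma \ref{lem-wave-initial-continuous} supplies the $C([0,\infty);L^2(D\times\Omega))$ regularity required by Definition \ref{def-solution}. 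Combining the Volterra uniqueness with the verified variational identity completes the proof.
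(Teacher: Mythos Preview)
Your proposal is correct and essentially identical to the paper's proof: both reduce \eqref{equ-var} to the scalar Volterra equations, cite \cite[Theorem 3.25]{KilbasSrivastavaTrujillo:2006} for uniqueness, verify that $\tilde v_n$ solves the equation (the paper packages your term-by-term power-series computation as an appeal to Lemma \ref{lem-ml-int}), and then sum against $\langle\varphi,\phi_n\rangle$. The only cosmetic differences are that the paper cites Lemma \ref{lem-wave-initial-infty} rather than Lemma \ref{lem-wave-initial-continuous} for the required $C([0,\infty);L^2(D\times\Omega))$ continuity, and it does not dwell on the interchange of sum and integral that you flag as the technical core.
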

\begin{proof} 
From Definition \ref{def-solution}, for $\N+$ and a.e. $\o\in \Omega$, we can derive the equation for  $v_n(t,\o)=\l u(\cdot,t,\o),\phi_n(\cdot)\rd$ as  
\begin{equation}\label{ODE_3}
 v_n(t,\o)+\lambda_nI_t^\alpha  v_n(t,\o)= \langle u_0(\cdot,\o)+tu_1(\cdot,\o),\phi_n(\cdot) \rangle_{L^2(D)}.
\end{equation}
The uniqueness of the solution of \eqref{ODE_3} is ensured by \cite[Theorem 3.25]{KilbasSrivastavaTrujillo:2006}. This with the completeness of $\{\phi_n\}_{n=1}^\infty$ in 
$H^2(D)\cap H_0^1(D)$ yields the uniqueness of the stochastic weak solution of the problem \eqref{equ-gov}. 

Define 
$$
\tilde v_n(t,\o) = \l u_0(\cdot,\o),\phi_n(\cdot)\rd  E_{\alpha,1}(-\lambda_nt^\alpha)
+ \l u_1(\cdot,\o),\phi_n(\cdot)\rd  tE_{\alpha,2}(-\lambda_nt^\alpha),
$$
and obviously $\tilde v_n(0,\o)=\l u_0(\cdot,\o),\phi_n(\cdot)\rd$. 
Also, from Lemma \ref{lem-ml-int}, we see that
\begin{equation*}
 \begin{aligned}
  I_t^\alpha (E_{\alpha,1}(-\lambda_nt^\alpha)) &= \lambda_n^{-1} (1 - E_{\alpha,1}(-\lambda_nt^\alpha)),\\
  I_t^\alpha (tE_{\alpha,2}(-\lambda_nt^\alpha)) &= t\lambda_n^{-1} (1 - E_{\alpha,2}(-\lambda_nt^\alpha)).
 \end{aligned}
\end{equation*}
Therefore it holds that 
\begin{equation}\label{e_2}
\tilde v_n(t,\o) + \lambda_n I_t^\alpha \tilde v_n(t,\o) = \l u_0(\cdot,\o)+tu_1(\cdot,\o),\phi_n (\cdot)\rd.
\end{equation}
We recall that $\mathcal A\phi_n=\lambda_n\phi_n$ and $\{\phi_n\}_{n=1}^\infty$ is an orthonormal basis in $L^2(D)$. For any $\varphi\in H^2(D)\cap H_0^1(D)$, we can sum \eqref{e_2} for each $n\in\mathbb N^+$ with coefficient $\langle \varphi,\phi_n \rangle_{L^2(D)}$. Then we have $u$ given in \eqref{sol-initial-wave} satisfies equation \eqref{equ-var}. With the continuity from Lemma \ref{lem-wave-initial-infty}, we can achieve the desired conclusion and complete the proof. 
\end{proof}

Now it is the time to complete the proof of Theorem \ref{thm-wave-initial}. 
\begin{proof}[Proof of Theorem \ref{thm-wave-initial}]
 The existence and uniqueness of the stochastic weak solution follow from Lemma \ref{lem-wave-initial-existence}, and 
 the regularity estimates are given by Lemmas \ref{lem-wave-initial-continuous}, \ref{lem-wave-initial-infty}, \ref{lem-wave-initial-L2} and \ref{lem-wave-initial-L1}.
\end{proof}

\section{Proof of Theorem \ref{thm-source}.}\label{sec-source}
 In this section, we set $u_0(x,\o)=u_1(x,\o)=0$. To shorten the proof, we define the stochastic processes $v_{n,1}(t,\omega)$ and $v_{n,2}(t,\omega)$ with $t>0$ and $\o\in\Omega$ as     
\begin{equation*}
\begin{aligned}
v_{n,1}(t,\omega)&=\int_0^t \l f_1(\cdot,t-\tau,\omega),\phi_n\rd
  \ \tau^{\alpha+\delta-1}E_{\alpha,\alpha+\delta} (-\lambda_n \tau^\alpha)~d\tau,\\
v_{n,2}(t,\omega)&=\int_0^t \l f_2(\cdot,t-\tau,\omega),\phi_n\rd
\ \tau^{\alpha+\delta-1}E_{\alpha,\alpha+\delta}
  (-\lambda_n \tau^\alpha)~dB(\tau), \quad \N+.
\end{aligned}
\end{equation*}

\begin{remark}
 For each $\N+$, $v_{n,2}$ is well-defined in the sense of Lemma \ref{Ito}. To prove this  statement, it is sufficient to show that for $t\in(0,\infty)$,  
 \begin{equation*}
 \E\Big[\int_0^t \l f_2(\cdot,t-\tau,\omega),\phi_n\rd^2
\ \tau^{2\alpha+2\delta-2}E^2_{\alpha,\alpha+\delta}
  (-\lambda_n \tau^\alpha)\ d\tau\Big]<\infty. 
 \end{equation*}
From Lemma \ref{mittag_bound} and $\alpha+\delta>1/2$, it follows that for $t>0$, 
\begin{equation*}
\begin{aligned}
  &\E\Big[\int_0^t \l f_2(\cdot,t-\tau,\omega),\phi_n\rd^2
\ \tau^{2\alpha+2\delta-2}E^2_{\alpha,\alpha+\delta}
  (-\lambda_n \tau^\alpha)\ d\tau\Big]\\
  &\le  C\E\Big[\int_0^t \|f_2(\cdot,t-\tau,\omega)\|^2_{L^2(D)}
  \ \tau^{2\alpha+2\delta-2}\ d\tau\Big]\\
  &\le C \|f_2\|^2_{C([0,\infty);L^2(D\times\Omega))}\ 
  \int_0^t \tau^{2\alpha+2\delta-2}\ d\tau\\
  &\le  C t^{2\alpha+2\delta-1} \|f_2\|^2_{C([0,\infty);L^2(D\times\Omega))}<\infty.
\end{aligned}
\end{equation*}
\end{remark}


\subsection{Continuous regularity.}
Similar to the above sections, we first prove the continuous regularity of $u$ in \eqref{equality_4}.  
\begin{lemma}\label{lemma_continuity}
We assume $\alpha\in(0,1)\cup(1,2)$ and let $f_1,f_2$ satisfy Assumption \ref{assumption}. Then the function $u$ in \eqref{equality_4} belongs to $C([0,\infty);L^2(D\times\Omega)).$ 
\end{lemma}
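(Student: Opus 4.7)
\emph{Plan of proof.} The goal is to show that, for each fixed $t\ge 0$, $\|u(\cdot,t+h,\cdot)-u(\cdot,t,\cdot)\|_{L^2(D\times\Omega)}\to 0$ as $h\to 0$. By Parseval's identity in $L^2(D)$ applied to \eqref{equality_4} and $(a+b)^2\le 2a^2+2b^2$, it suffices to establish, for $j=1,2$, that
\begin{equation*}
\sum_{n=1}^\infty \E\big[|v_{n,j}(t+h,\omega)-v_{n,j}(t,\omega)|^2\big]\to 0 \quad \text{as } h\to 0.
\end{equation*}
The change of variable $\sigma=t-\tau$ recasts both processes in the standard convolutional form
\begin{equation*}
v_{n,j}(t,\omega)=\int_0^t \l f_j(\cdot,\sigma,\omega),\phi_n\rd\, K_n^t(\sigma)\, dJ_j(\sigma),\quad K_n^t(\sigma):=(t-\sigma)^{\alpha+\delta-1}E_{\alpha,\alpha+\delta}(-\lambda_n(t-\sigma)^\alpha),
\end{equation*}
where $dJ_1=d\sigma$ and $dJ_2=dB(\sigma)$. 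In this form, the integrand for $j=2$ at time $\sigma$ is $\mathcal F_\sigma$-adapted, so the Ito isometry of Lemma \ref{Ito} directly applies.

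For $h>0$, I split
\begin{equation*}
v_{n,j}(t+h)-v_{n,j}(t)=\underbrace{\int_t^{t+h}\l f_j,\phi_n\rd K_n^{t+h}(\sigma)\, dJ_j(\sigma)}_{A_{n,j}(h)} + \underbrace{\int_0^t \l f_j,\phi_n\rd[K_n^{t+h}(\sigma)-K_n^t(\sigma)]\, dJ_j(\sigma)}_{B_{n,j}(h)}.
\end{equation*}
For $A_{n,j}(h)$ I apply Cauchy--Schwarz ($j=1$) or the Ito isometry ($j=2$), combined with the Mittag--Leffler bound from Lemma \ref{mittag_bound} in the form $|K_n^t(\sigma)|^2\le C(t-\sigma)^{2\alpha+2\delta-2}$ and Parseval summation $\sum_n\l f_j(\cdot,\sigma,\omega),\phi_n\rd^2=\|f_j(\cdot,\sigma,\omega)\|_{L^2(D)}^2$. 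Together with the integrability $\int_t^{t+h}(t+h-\sigma)^{2\alpha+2\delta-2}\,d\sigma=h^{2\alpha+2\delta-1}/(2\alpha+2\delta-1)$ (finite by the condition $\alpha+\delta>1/2$), this gives $\sum_n\E[|A_{n,j}(h)|^2]=O(h^{2\alpha+2\delta-1})\to 0$, where the upper bound uses $\|f_1\|_{L^2(D\times(t,t+h)\times\Omega)}\to 0$ and the uniform bound $\|f_2\|_{C([0,\infty);L^2(D\times\Omega))}<\infty$.

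For $B_{n,j}(h)$, the key observation is that for each $n$ and each $\sigma\in(0,t)$, $K_n^{t+h}(\sigma)-K_n^t(\sigma)\to 0$ pointwise as $h\to 0$ by continuity of the Mittag--Leffler function, while for $|h|\le h_0$ the crude estimate $|K_n^{t+h}(\sigma)-K_n^t(\sigma)|^2\le C[(t+h_0-\sigma)^{2\alpha+2\delta-2}+(t-\sigma)^{2\alpha+2\delta-2}]$ furnishes a dominator that is \emph{independent of $n$}. After Parseval collapse on the $f_j$-side and taking expectation, the integrable majorant $C\|f_j(\cdot,\sigma,\omega)\|_{L^2(D)}^2[(t+h_0-\sigma)^{2\alpha+2\delta-2}+(t-\sigma)^{2\alpha+2\delta-2}]$ allows an application of the dominated convergence theorem, iterated in the three variables $\sigma$, $\omega$ and the series index $n$, to conclude $\sum_n\E[|B_{n,j}(h)|^2]\to 0$. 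The case $h<0$ is handled symmetrically, and the case $t=0$ follows from the $A$-type bound alone.

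The main technical obstacle is twofold: first, making the stochastic integral rigorous, since the paper's notation $f_2(\cdot,t-\tau,\omega)\,dB(\tau)$ is only meaningful after the change of variable above (which restores $\mathcal F_\sigma$-adaptedness); second, producing summable, $h$-uniform dominators for the series in $n$, which hinges essentially on $2\alpha+2\delta-1>0$ to keep the kernel singularity integrable and on the uniform bound $\|f_2\|_{C([0,\infty);L^2(D\times\Omega))}<\infty$. No monotonicity of $E_{\alpha,\beta}$ is invoked, so the argument is uniform across $\alpha\in(0,1)\cup(1,2)$; only the asymptotic bound $|E_{\alpha,\beta}(-x)|\le C/(1+x)$ from Lemma \ref{mittag_bound} is used.
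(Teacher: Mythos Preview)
Your argument is correct and shares the paper's skeleton (split the increment into a tail over $(t,t+h)$ and a difference over $(0,t)$, bound via Cauchy--Schwarz or the Ito isometry plus Lemma~\ref{mittag_bound}, then invoke dominated convergence), but the decomposition is genuinely different. The paper keeps the kernel $\tau^{\alpha+\delta-1}E_{\alpha,\alpha+\delta}(-\lambda_n\tau^\alpha)$ fixed and places the increment on the source, so its main terms are of the form $\int_0^t \langle f_j(\cdot,t+h-\tau)-f_j(\cdot,t-\tau),\phi_n\rangle\,\tau^{\alpha+\delta-1}E_{\alpha,\alpha+\delta}(-\lambda_n\tau^\alpha)\,dJ(\tau)$, and its DCT step therefore rests on the assumed time-continuity $f_j\in C([0,\infty);L^2(D\times\Omega))$. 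You instead keep $f_j(\cdot,\sigma,\omega)$ fixed and place the increment on the kernel $K_n^{t}(\sigma)$; your DCT step uses only the automatic continuity of the Mittag--Leffler kernel together with the uniform bound on $f_2$ and the $L^2$-integrability of $f_1$. What this buys you is a cleaner application of Lemma~\ref{Ito}: your integrand $\langle f_j(\cdot,\sigma,\omega),\phi_n\rangle K_n^t(\sigma)$ is genuinely $\mathcal F_\sigma$-adapted, whereas the paper's $\langle f_2(\cdot,t-\tau,\omega),\phi_n\rangle$ is $\mathcal F_{t-\tau}$- rather than $\mathcal F_\tau$-measurable. One caveat: your phrase ``change of variable $\sigma=t-\tau$'' should be read as adopting the adapted form of $v_{n,2}$ (which is the one consistent with Definition~\ref{I_alpha} and with the verification in Lemma~\ref{lemma_existence_source}), not as a literal substitution in the stochastic integral --- a substitution in $dB(\tau)$ would produce a time-reversed Brownian differential, not $dB(\sigma)$.
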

\begin{proof}
 Given $t\in [0,\infty)$, $\o\in\Omega$ and $h>0$, we have 
  \begin{align*}
   &\l u(\cdot,t+h,\o)-u(\cdot,t,\o) ,\phi_n\rd\\
   &=\int_0^t \l f_1(\cdot,t+h-\tau,\omega)-f_1(\cdot,t-\tau,\omega),\phi_n\rd
   \ \tau^{\alpha+\delta-1}E_{\alpha,\alpha+\delta}(-\lambda_n \tau^\alpha)~d\tau\\
   &\quad+\int_t^{t+h} \l f_1(\cdot,t+h-\tau,\omega),\phi_n\rd
   \ \tau^{\alpha+\delta-1}E_{\alpha,\alpha+\delta}(-\lambda_n \tau^\alpha)~d\tau\\
   &\quad+\int_0^t \l f_2(\cdot,t+h-\tau,\omega)-f_2(\cdot,t-\tau,\omega),\phi_n\rd
   \ \tau^{\alpha+\delta-1}E_{\alpha,\alpha+\delta}(-\lambda_n \tau^\alpha)~dB(\tau)\\
   &\quad+\int_t^{t+h} \l f_2(\cdot,t+h-\tau,\omega),\phi_n\rd
   \ \tau^{\alpha+\delta-1}E_{\alpha,\alpha+\delta}(-\lambda_n \tau^\alpha)~dB(\tau).
  \end{align*}
Then we see that   
\begin{equation*}
 \begin{aligned}
  &\| u(\cdot,t+h,\o)-u(\cdot,t,\o)\|^2_{L^2(D)}\\
  &\le C\sum_{n=1}^\infty \Big[\int_0^t \l f_1(\cdot,t+h-\tau,\omega)-f_1(\cdot,t-\tau,\omega),\phi_n\rd\ \tau^{\alpha+\delta-1}
  E_{\alpha,\alpha+\delta}(-\lambda_n \tau^\alpha)~d\tau\Big]^2\\
   &\quad+C\sum_{n=1}^\infty \Big[\int_t^{t+h} \l f_1(\cdot,t+h-\tau,\omega),\phi_n\rd\ \tau^{\alpha+\delta-1}
   E_{\alpha,\alpha+\delta}(-\lambda_n \tau^\alpha)~d\tau\Big]^2\\
   &\quad+C\sum_{n=1}^\infty \Big[\int_0^t \l f_2(\cdot,t+h-\tau,\omega)-f_2(\cdot,t-\tau,\omega),\phi_n\rd
   \ \tau^{\alpha+\delta-1}E_{\alpha,\alpha+\delta}(-\lambda_n \tau^\alpha)~dB(\tau)\Big]^2\\
   &\quad+C\sum_{n=1}^\infty \Big[\int_t^{t+h} \l f_2(\cdot,t+h-\tau,\omega),\phi_n\rd
   \ \tau^{\alpha+\delta-1}E_{\alpha,\alpha+\delta}(-\lambda_n \tau^\alpha)~dB(\tau)\Big]^2\\
   &=:C(S_1+S_2+S_3+S_4).
 \end{aligned}
\end{equation*}

By the fact that $\alpha+\delta>1/2$ and Lemma \ref{mittag_bound}, it follows that 
\begin{equation}\label{esti-int_ml}
\begin{aligned}
 \int_0^t \tau^{2\alpha+2\delta-2}E^2_{\alpha,\alpha+\delta}(-\lambda_n \tau^\alpha)\ d\tau
 &\le  C\int_0^t \tau^{2\alpha+2\delta-2}\ d\tau=Ct^{2\alpha+2\delta-1},\\
 \int_t^{t+h} \tau^{2\alpha+2\delta-2}E^2_{\alpha,\alpha+\delta}(-\lambda_n \tau^\alpha)\ d\tau
 &\le C\big[(t+h)^{2\alpha+2\delta-1}-t^{2\alpha+2\delta-1} \big].
 \end{aligned}
\end{equation}
To estimate $S_1$, the above inequalities \eqref{esti-int_ml} combined with the H\"older inequality and the monotone convergence theorem derive that
\begin{equation*}
 \begin{aligned}
  \E[S_1]
  \le& Ct^{2\alpha + 2\delta -1}\E\Big[\sum_{n=1}^\infty\int_0^t \l f_1(\cdot,t+h-\tau,\omega)
  -f_1(\cdot,t-\tau,\omega),\phi_n\rd^2\ d\tau
  \ \Big]\\
  \le& Ct^{2\alpha+2\delta-1} \int_0^t \E\Big[\| f_1(\cdot,t+h-\tau,\omega)-f_1(\cdot,t-\tau,\omega)\|^2_{L^2(D)} \Big]\ d\tau.
 \end{aligned}
\end{equation*}
The conditions $f_1\in C([0,\infty);L^2(D\times\Omega))$ and 
$\|f_1\|_{C([0,\infty);L^2(D\times\Omega))}<\infty$ allow us to apply the dominated convergence 
theorem to the above integral. This yields that 
\begin{equation*}
\begin{aligned}
 &\lim_{h\to 0+}\int_0^t \| f_1(\cdot,t+h-\tau,\cdot)-f_1(\cdot,t-\tau,\cdot)\|^2_{L^2(D\times\Omega)}\ d\tau\\
&=\int_0^t \lim_{h\to 0+}\| f_1(\cdot,t+h-\tau,\cdot)-f_1(\cdot,t-\tau,\cdot)\|^2_{L^2(D\times\Omega)}\ d\tau=0.
\end{aligned}
\end{equation*}
Now we have $\E[S_1]\to 0$ as $h\to 0+$.

For $S_2$, using the H\"older inequality and Lemma \ref{mittag_bound} again, we derive from \eqref{esti-int_ml} that
 \begin{align*}
  &\E[S_2]\\
  &\le\E\Big[\sum_{n=1}^\infty \big(\int_t^{t+h} \l f_1(\cdot,t+h-\tau,\omega),\phi_n\rd^2\ d\tau\big)
  \ \big(\int_t^{t+h}\tau^{2\alpha+2\delta-2}E^2_{\alpha,\alpha+\delta}(-\lambda_n \tau^\alpha)~d\tau\big)\Big]\\
  &\le C\big[(t+h)^{2\alpha+2\delta-1}-t^{2\alpha+2\delta-1} \big]\ \int_t^{t+h} \E\Big[\| f_1(\cdot,t+h-\tau,\omega)\|_{L^2(D)}^2 \Big]\ d\tau\\
  &\le C\big[(t+h)^{2\alpha+2\delta-1}-t^{2\alpha+2\delta-1} \big]\ h \|f_1\|^2_{C([0,\infty);L^2(D\times\Omega))}\to 0,\ \text{as}\ h\to 0\!+.
 \end{align*}

For $S_3$, with the Ito formula in Lemma \ref{Ito} and the estimates for Mittag-Leffler functions in Lemma \ref{mittag_bound}, we deduce that
\begin{equation*}
\begin{aligned}
 &\E[S_3]\\
 &=\sum_{n=1}^\infty \E\Big[\big[\int_0^t \l f_2(\cdot,t+h-\tau,\omega)-f_2(\cdot,t-\tau,\omega),\phi_n\rd
   \ \tau^{\alpha+\delta-1}E_{\alpha,\alpha+\delta}(-\lambda_n \tau^\alpha)~dB(\tau)\big]^2\Big]
   \\
 &= \E\Big[\sum_{n=1}^\infty \int_0^t \l f_2(\cdot,t+h-\tau,\omega)-f_2(\cdot,t-\tau,\omega),\phi_n\rd^2
   \ \tau^{2\alpha+2\delta-2}E^2_{\alpha,\alpha+\delta}(-\lambda_n \tau^\alpha)~d\tau \Big]\\
 &\le C \int_0^t \| f_2(\cdot,t+h-\tau,\cdot)-f_2(\cdot,t-\tau,\cdot)\|_{L^2(D\times\Omega)}^2
   \ \tau^{2\alpha+2\delta-2}~d\tau.
\end{aligned}
\end{equation*}
With the facts that $\alpha+\delta>1/2$ and 
$\|f_2\|_{C([0,\infty);L^2(D\times\Omega))}<\infty$, we can apply the dominated convergence  theorem to obtain
\begin{equation*}
\begin{aligned}
 &\lim_{h\to0+}\int_0^t \| f_2(\cdot,t+h-\tau,\cdot)
  -f_2(\cdot,t-\tau,\cdot)\|_{L^2(D\times\Omega)}^2
   \ \tau^{2\alpha+2\delta-2}~d\tau\\
  &= \int_0^t \lim_{h\to0+}\| f_2(\cdot,t+h-\tau,\cdot)
  -f_2(\cdot,t-\tau,\cdot)\|_{L^2(D\times\Omega)}^2
   \ \tau^{2\alpha+2\delta-2}~d\tau=0,
\end{aligned}
\end{equation*} 
which gives $\lim_{h\to0+} \E[S_3]=0$.

For $S_4$,  it follows analogously that
 \begin{align*}
 \E[S_4]&= \sum_{n=1}^\infty \E\Big[\big[\int_t^{t+h} \l f_2(\cdot,t+h-\tau,\omega),\phi_n\rd
   \ \tau^{\alpha+\delta-1}E_{\alpha,\alpha+\delta}(-\lambda_n \tau^\alpha)~dB(\tau)\big]^2\Big]\\
   &=\sum_{n=1}^\infty \E\Big[\int_t^{t+h} \l f_2(\cdot,t+h-\tau,\omega),\phi_n\rd^2
   \ \tau^{2\alpha+2\delta-2}E^2_{\alpha,\alpha+\delta}(-\lambda_n \tau^\alpha)~d\tau \Big]\\
   &\le C\int_t^{t+h} \|f_2(\cdot,t+h-\tau,\cdot)\|^2_{L^2(D\times\Omega)}\ \tau^{2\alpha+2\delta-2}\  d\tau\\
   &\le C\big[(t+h)^{2\alpha+2\delta-1}-t^{2\alpha+2\delta-1} \big]\ \|f_2\|^2_{C([0,\infty);L^2(D\times\Omega))} 
   \to 0,\ \text{as}\ h\to 0\!+.
 \end{align*}
Combining the estimates for $S_j,\ j=1,2,3,4$, we have   
\begin{equation*}
 \lim_{h\to0+}\E\left[\| u(\cdot,t+h,\o)-u(\cdot,t,\o)\|^2_{L^2(D)}\right]=
 \lim_{h\to0+} \| u(\cdot,t+h,\cdot)-u(\cdot,t,\cdot)\|^2_{L^2(D\times\Omega)}=0.
\end{equation*}
The result for the case of $h\to 0-$ can be proved analogously. Hence, we have 
\begin{equation*}
 \lim_{h\to0} \| u(\cdot,t+h,\cdot)-u(\cdot,t,\cdot)\|^2_{L^2(D\times\Omega)}=0
\end{equation*}
and complete the proof of the lemma.
\end{proof}

Next we give the estimate of the continuous regularity.
\begin{lemma}\label{lemma_regularity_c_source}
Under the same assumptions in Lemma \ref{lemma_continuity}, the continuous regularity of the function $u$ in \eqref{equality_4} is given as 
 \begin{equation*}
 \|u\|_{C([0,\infty);L^2(D\times\Omega))}
\le C\big(\|f_1\|_{L^2(D\times(0,\infty)\times\Omega)}
+\|f_2\|_{C([0,\infty);L^2(D\times\Omega))}\big).
 \end{equation*}
\end{lemma}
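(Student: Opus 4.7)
The plan is to decompose the representation \eqref{equality_4} as $u(x,t,\o)=\sum_{n=1}^\infty (v_{n,1}(t,\omega)+v_{n,2}(t,\omega))\phi_n(x)$, and using Parseval's identity in $L^2(D)$ together with the elementary bound $(a+b)^2\le 2(a^2+b^2)$, reduce the proof to establishing uniform-in-$t$ bounds
\[
\sum_{n=1}^\infty \E[|v_{n,j}(t,\omega)|^2]\le C\,Q_j,\quad j=1,2,
\]
where $Q_1=\|f_1\|_{L^2(D\times(0,\infty)\times\Omega)}^2$ and $Q_2=\|f_2\|_{C([0,\infty);L^2(D\times\Omega))}^2$.

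For the deterministic part $v_{n,1}$, I would apply Cauchy--Schwarz in $\tau$ and exchange summation and integration via Fubini:
\[
|v_{n,1}(t,\omega)|^2\le \Big(\int_0^t \l f_1(\cdot,t-\tau,\omega),\phi_n\rd^2 d\tau\Big)\Big(\int_0^t \tau^{2\alpha+2\delta-2}E_{\alpha,\alpha+\delta}^2(-\lambda_n\tau^\alpha)\,d\tau\Big).
\]
The second factor is uniformly bounded in $n$ and $t$: by Lemma \ref{mittag_bound} and the substitution $s=\lambda_n\tau^\alpha$, a direct computation gives
\[
\int_0^\infty \frac{\tau^{2\alpha+2\delta-2}}{(1+\lambda_n\tau^\alpha)^2}\,d\tau=C\lambda_n^{-(2\alpha+2\delta-1)/\alpha}\le C\lambda_1^{-(2\alpha+2\delta-1)/\alpha},
\]
where the resulting $s$-integral converges precisely because $\alpha+\delta>1/2$ (integrability at $0$) and $\delta<1/2$ (integrability at $\infty$). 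Summing in $n$, taking expectation, and applying Parseval in $L^2(D)$ then yields $\sum_n\E[|v_{n,1}|^2]\le C\int_0^t\|f_1(\cdot,s,\cdot)\|_{L^2(D\times\Omega)}^2\,ds\le CQ_1$, uniformly in $t$.

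For the stochastic part $v_{n,2}$, the It\^o isometry (Lemma \ref{Ito}) gives
\[
\E[|v_{n,2}(t,\omega)|^2]=\int_0^t \tau^{2\alpha+2\delta-2}E_{\alpha,\alpha+\delta}^2(-\lambda_n\tau^\alpha)\,\E\bigl[\l f_2(\cdot,t-\tau,\omega),\phi_n\rd^2\bigr]\,d\tau.
\]
The naive pointwise bound used in the opening remark of this section, which discards the Mittag-Leffler decay, only produces growth of order $t^{2\alpha+2\delta-1}$ per $n$ and is therefore insufficient. The crucial observation is that the spectral gap $\lambda_n\ge\lambda_1>0$ together with Lemma \ref{mittag_bound} implies $E_{\alpha,\alpha+\delta}^2(-\lambda_n\tau^\alpha)\le C/(1+\lambda_1\tau^\alpha)^2$ for every $n\ge 1$ and $\tau>0$. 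Exchanging $\sum_n$ with $\int_0^t$ by Fubini and applying Parseval at fixed $t-\tau$ gives
\begin{align*}
\sum_{n=1}^\infty \E[|v_{n,2}(t,\omega)|^2]
&\le \int_0^t \frac{C\tau^{2\alpha+2\delta-2}}{(1+\lambda_1\tau^\alpha)^2}\|f_2(\cdot,t-\tau,\cdot)\|_{L^2(D\times\Omega)}^2\,d\tau\\
&\le CQ_2\int_0^\infty \frac{\tau^{2\alpha+2\delta-2}}{(1+\lambda_1\tau^\alpha)^2}\,d\tau\le CQ_2,
\end{align*}
the last integral being finite and independent of $t$ by the substitution computation used above.

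Combining the two bounds, taking the supremum over $t\in[0,\infty)$, and invoking the continuity of $u$ already established in Lemma \ref{lemma_continuity} completes the proof. The principal obstacle is the uniform-in-$t$ control of the stochastic contribution: the naive pointwise bound on $E_{\alpha,\alpha+\delta}^2$ yields only polynomial growth in $t$, so it is essential to exploit both the Mittag-Leffler decay and the strict positivity of $\lambda_1$ simultaneously, so that the $\tau$-integral is globally convergent.
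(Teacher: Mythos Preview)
Your proposal is correct and follows essentially the same approach as the paper: decompose into the deterministic and stochastic contributions, apply Cauchy--Schwarz and the It\^o isometry respectively, and control both via the uniform-in-$n$ finiteness of $\int_0^\infty \tau^{2\alpha+2\delta-2}E_{\alpha,\alpha+\delta}^2(-\lambda_n\tau^\alpha)\,d\tau$. The only cosmetic difference is that the paper establishes this integral bound by splitting $[0,1]\cup[1,\infty)$ (see \eqref{inequality_1}), whereas you obtain it via the substitution $s=\lambda_n\tau^\alpha$ and the conditions $\alpha+\delta>1/2$, $\delta<1/2$; both arguments yield the same uniform constant and the rest of the proof coincides.
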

\begin{proof}
 Given $t\in [0,\infty),$ we can deduce that  
 \begin{equation*}
 \begin{aligned}
 \|u(\cdot,t,\cdot)\|^2_{L^2(D\times\Omega)}
  \le& C \sum_{n=1}^\infty \E\Big[\big(\int_0^t\tau^{\alpha+\delta-1}
  E_{\alpha,\alpha+\delta}(-\lambda_n\tau^\alpha)\ \l f_1(\cdot,t-\tau,\omega),\phi_n\rd ~d\tau \big)^2\Big]\\
  &+C \sum_{n=1}^\infty \E\Big[\big(\int_0^t\tau^{\alpha+\delta-1}
  E_{\alpha,\alpha+\delta}(-\lambda_n\tau^\alpha)\ \l f_2(\cdot,t-\tau,\omega),\phi_n\rd ~dB(\tau) \big)^2\Big].
  \end{aligned}  
 \end{equation*}
The Ito formula in Lemma \ref{Ito} gives that 
\begin{equation*}
\begin{aligned}
 &\E\Big[\big(\int_0^t\tau^{\alpha+\delta-1}
  E_{\alpha,\alpha+\delta}(-\lambda_n\tau^\alpha)\ \l f_2(\cdot,t-\tau,\omega),\phi_n\rd ~dB(\tau) \big)^2\Big]\\
  &=\E\Big[\int_0^t\tau^{2\alpha+2\delta-2}
  E^2_{\alpha,\alpha+\delta}(-\lambda_n\tau^\alpha)\ \l f_2(\cdot,t-\tau,\omega),\phi_n\rd^2 ~d\tau\Big],
  \end{aligned}
\end{equation*}
which combined with the H\"older inequality and the Fubini lemma further implies
\begin{equation*}
 \begin{aligned}
 &\|u(\cdot,t,\cdot)\|^2_{L^2(D\times\Omega)}\\
   &\le  C \sum_{n=1}^\infty  \Big[\int_0^t\tau^{2\alpha+2\delta-2}E^2_{\alpha,\alpha+\delta}
  (-\lambda_n\tau^\alpha)~d\tau\Big]~
  \E\Big[\int_0^t \l f_1(\cdot,t-\tau,\omega),\phi_n\rd^2\ d\tau\Big]\\
  &\quad +C \sum_{n=1}^\infty \int_0^t\tau^{2\alpha+2\delta-2} E^2_{\alpha,\alpha+\delta}(-\lambda_n\tau^\alpha) 
  \ \E\left[\l f_2(\cdot,t-\tau,\omega),\phi_n\rd^2\right]~d\tau\\
  &=:C(S_1+S_2).
  \end{aligned}  
 \end{equation*}
Moreover, Lemma \ref{mittag_bound}, condition \eqref{condition_delta} and the fact $\lambda_1\le \lambda_n$ ensure that   
\begin{equation}\label{inequality_1}
 \begin{aligned}
  \int_0^\infty\tau^{2\alpha+2\delta-2}E^2_{\alpha,\alpha+\delta}(-\lambda_n\tau^\alpha)~d\tau
  &= \Big(\int_0^1+\int_1^\infty\Big)\tau^{2\alpha+2\delta-2}E^2_{\alpha,\alpha+\delta}(-\lambda_n\tau^\alpha)~d\tau \\
  &\le C\Big(\int_0^1+\int_1^\infty\Big)\tau^{2\alpha+2\delta-2} (1+\lambda_n\tau^\alpha)^{-2}~d\tau \\
  &\le C\int_0^1\tau^{2\alpha+2\delta-2} ~d\tau+C\lambda_1^{-2}\int_1^\infty\tau^{2\alpha+2\delta-2}\tau^{-2\alpha}~d\tau\\
  &=\tilde C<\infty.
 \end{aligned}
\end{equation}
 Here $\tilde C>0$ is a constant which is independent of $n$. So we can further estimate $S_1$ as  
\begin{equation*}
\begin{aligned}
S_1&\le C \sum_{n=1}^\infty  \Big[\int_0^\infty\tau^{2\alpha+2\delta-2}E^2_{\alpha,\alpha+\delta} (-\lambda_n\tau^\alpha)~d\tau\Big]~
  \E\Big[\int_0^\infty \l f_1(\cdot,\tau,\omega),\phi_n\rd^2\ d\tau\Big]\\
&\le C \E\Big[\int_0^\infty \sum_{n=1}^\infty \l f_1(\cdot,\tau,\omega),\phi_n\rd^2\ d\tau\Big] =C\|f_1\|^2_{L^2(D\times(0,\infty)\times\Omega)}.  
\end{aligned}
\end{equation*}
For $S_2$, following the technique in \eqref{inequality_1} and the result 
$$ \E\Big[\sum_{n=1}^\infty \l f_2(\cdot,\tau,\omega),\phi_n\rd^2\Big]
 \le\|f_2\|^2_{C([0,\infty);L^2(D\times\Omega))}\ \text{for}\ \tau\ge0,$$  we have 
\begin{equation*}
 \begin{aligned}
  S_2=&C  \int_0^t (\chi_{{}_{\tau\le 1}}+\chi_{{}_{\tau\ge 1}})\sum_{n=1}^\infty\tau^{2\alpha+2\delta-2} E^2_{\alpha,\alpha+\delta}(-\lambda_n\tau^\alpha) \ \E\Big[\l f_2(\cdot,t-\tau,\omega),\phi_n\rd^2\Big]~d\tau\\
  \le& C \|f_2\|^2_{C([0,\infty);L^2(D\times\Omega))}
  \Big( \int_0^1\tau^{2\alpha+2\delta-2} ~d\tau
  + \lambda_1^{-2} \int_1^\infty \tau^{2\delta-2}\ d\tau\Big)\\
  \le& C \|f_2\|^2_{C([0,\infty);L^2(D\times\Omega))}. 
 \end{aligned}
\end{equation*}

Now we can conclude that for $t\in[0,\infty)$, 
\begin{equation*}
  \|u(\cdot,t,\cdot)\|_{L^2(D\times\Omega)}
\le C\big(\|f_1\|_{L^2(D\times(0,\infty)\times\Omega)}
+\|f_2\|_{C([0,\infty);L^2(D\times\Omega))}\big),
\end{equation*}
which gives 
\begin{equation*}
 \|u\|_{C([0,\infty);L^2(D\times\Omega))}
\le C\big(\|f_1\|_{L^2(D\times(0,\infty)\times\Omega)}
+\|f_2\|_{C([0,\infty);L^2(D\times\Omega))}\big)
\end{equation*}
and completes the proof of the lemma. 
\end{proof}

In the case of $\alpha\in(1,2)$,  we can deduce a stronger time-continuity of $u$, which is $u\in C([0,\infty);L^2(\Omega;H^1(D)))$.  
\begin{corollary}
\label{cor-wave-source-conti}
 We let $f_1,f_2$ satisfy Assumption \ref{assumption} and $\alpha\in(1,2)$. Then the function $u$ in \eqref{equality_4} belongs to $C([0,\infty);L^2(\Omega;H^1(D)))$. Also, for each $T\in(0,\infty)$, it holds that 
 \begin{equation*}
  \|u\|_{C([0,T];L^2(\Omega;H^1(D)))}\le CT^{(\alpha-1)/2}
  \big(\|f_1\|_{L^2(D\times(0,T)\times \Omega)}+\|f_2\|_{C([0,\infty);L^2(D\times\Omega))}\big).
  \end{equation*}
\end{corollary}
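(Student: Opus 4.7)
The strategy is to promote the argument of Lemma \ref{lemma_regularity_c_source} (which bounds $u$ in $L^2(D\times\Omega)$) one level up in the spatial scale by inserting the operator $\mathcal A^{1/2}$ into every step. Since $D(\mathcal A^{1/2})=H_0^1(D)$, this amounts to multiplying each eigen-coefficient in the representation \eqref{equality_4} by $\lambda_n^{1/2}$, so every occurrence of $\int_0^t\tau^{2\alpha+2\delta-2}E_{\alpha,\alpha+\delta}^2(-\lambda_n\tau^\alpha)\,d\tau$ appearing in the proof of Lemma \ref{lemma_regularity_c_source} is now weighted by an additional factor $\lambda_n$. The crucial point is that this weighted integral remains uniformly bounded in $n$ precisely when $\alpha>1$.

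The key new estimate is
\begin{equation*}
\lambda_n\tau^{2\alpha+2\delta-2}E_{\alpha,\alpha+\delta}^2(-\lambda_n\tau^\alpha)
=\tau^{\alpha+2\delta-2}\,\frac{\lambda_n\tau^\alpha}{(1+\lambda_n\tau^\alpha)^2}\,\bigl[(1+\lambda_n\tau^\alpha)E_{\alpha,\alpha+\delta}(-\lambda_n\tau^\alpha)\bigr]^2
\le C\tau^{\alpha+2\delta-2},
\end{equation*}
which combines Lemma \ref{mittag_bound} with the elementary bound $\sup_{x\ge 0}x/(1+x)^2\le 1/4$. Since $\alpha\in(1,2)$ and $\delta\ge 0$, the exponent $\alpha+2\delta-2$ is strictly greater than $-1$, so the right-hand side is integrable at the origin and
\begin{equation*}
\lambda_n\int_0^t\tau^{2\alpha+2\delta-2}E_{\alpha,\alpha+\delta}^2(-\lambda_n\tau^\alpha)\,d\tau\le Ct^{\alpha+2\delta-1}
\end{equation*}
uniformly in $n$. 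Applying the Cauchy-Schwarz inequality to $v_{n,1}$ and the Ito isometry of Lemma \ref{Ito} to $v_{n,2}$ exactly as in Lemma \ref{lemma_regularity_c_source}, then summing via Parseval's identity, yields the pointwise bound
\begin{equation*}
\|u(\cdot,t,\cdot)\|^2_{L^2(\Omega;H^1(D))}
\le Ct^{\alpha-1}\bigl(\|f_1\|^2_{L^2(D\times(0,t)\times\Omega)}+\|f_2\|^2_{C([0,\infty);L^2(D\times\Omega))}\bigr)
\end{equation*}
on $[0,T]$, after absorbing the auxiliary factor $t^{2\delta}$ into the constant, which produces the stated $T^{(\alpha-1)/2}$ rate.

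For the continuity in time, I would replicate the four-term splitting of Lemma \ref{lemma_continuity} applied now to $\mathcal A^{1/2}u$: each of the four sums $S_1,\dots,S_4$ acquires an extra factor $\lambda_n$ inside the series, but the weighted-integral estimate above shows these sums are dominated, uniformly in the time shift $h$, by $\|f_j\|$-type quantities. The dominated convergence theorem together with the assumed continuity $f_j\in C([0,\infty);L^2(D\times\Omega))$ then drives the four pieces to zero as $h\to 0$, giving $u\in C([0,\infty);L^2(\Omega;H^1(D)))$. The main obstacle is precisely the integrability at $\tau=0$: without the wave regime $\alpha>1$, the exponent $\alpha+2\delta-2$ could drop below $-1$ and the uniform-in-$n$ bound would break down. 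This is exactly why the corollary is confined to $\alpha\in(1,2)$, and why the analogous $H^1$-continuity is not claimed in the subdiffusive case without stronger assumptions on the sources.
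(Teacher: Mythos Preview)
Your proposal is correct and follows essentially the same route as the paper: insert $\mathcal A^{1/2}$ into the representation, use the pointwise bound $\lambda_n\tau^{2\alpha+2\delta-2}E_{\alpha,\alpha+\delta}^2(-\lambda_n\tau^\alpha)\le C\tau^{\alpha+2\delta-2}$ (the paper's inequality \eqref{inequality_2}), then repeat the four-term splitting of Lemma \ref{lemma_continuity} for continuity and the two-term splitting of Lemma \ref{lemma_regularity_c_source} for the norm. The only cosmetic difference is that the paper sets $\delta=0$ at the outset (permissible since $\alpha\in(1,2)$ already guarantees $\alpha+\delta>1/2$), whereas you carry a general $\delta$ throughout; your remark that $t^{2\delta}$ is ``absorbed into the constant'' then yields $CT^{(\alpha+2\delta-1)/2}$ rather than exactly $CT^{(\alpha-1)/2}$, which is why the paper prefers to fix $\delta=0$ from the start.
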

\begin{proof}
Firstly, we point out that we can set $\delta = 0$ in view of the condition \eqref{condition_delta} and $\alpha\in(1,2)$. Noting the relation $D(\mathcal A^{1/2}) = H_0^1(D)$ and following the proof of Lemma \ref{lemma_continuity}, we have that given $t\in [0,T]\subset[0,\infty)$, $\o\in\Omega$ and $h>0$, 
 \begin{align*}
  &\| u(\cdot,t+h,\o)-u(\cdot,t,\o)\|^2_{H^1(D)}\\
  &\le C\sum_{n=1}^\infty \Big[\int_0^t \l f_1(\cdot,t+h-\tau,\omega)-f_1(\cdot,t-\tau,\omega),\phi_n\rd\ \lambda_n^{1/2}\tau^{\alpha-1}
  E_{\alpha,\alpha}(-\lambda_n \tau^\alpha)~d\tau\Big]^2\\
   &\quad+C\sum_{n=1}^\infty \Big[\int_t^{t+h} \l f_1(\cdot,t+h-\tau,\omega),\phi_n\rd\ \lambda_n^{1/2}\tau^{\alpha-1}
   E_{\alpha,\alpha}(-\lambda_n \tau^\alpha)~d\tau\Big]^2\\
   &\quad+C\sum_{n=1}^\infty \Big[\int_0^t \l f_2(\cdot,t+h-\tau,\omega)-f_2(\cdot,t-\tau,\omega),\phi_n\rd 
   \ \lambda_n^{1/2}\tau^{\alpha-1}E_{\alpha,\alpha}(-\lambda_n \tau^\alpha)~dB(\tau)\Big]^2\\
   &\quad+C\sum_{n=1}^\infty \Big[\int_t^{t+h} \l f_2(\cdot,t+h-\tau,\omega),\phi_n\rd
   \ \lambda_n^{1/2}\tau^{\alpha-1}E_{\alpha,\alpha}(-\lambda_n \tau^\alpha)~dB(\tau)\Big]^2\\
   &=:C(S_1+S_2+S_3+S_4).
 \end{align*}

From Lemma \ref{mittag_bound} and the estimate \eqref{esti-useful1}, we see that 
\begin{equation}\label{inequality_2}
 |\lambda_n\tau^{2\alpha-2} E^2_{\alpha,\alpha}(-\lambda_n \tau^\alpha)|
 \le  \Big|\lambda_n\tau^{2\alpha-2} \frac{C}{(1+\lambda_n \tau^{\alpha})^2} \Big|
 \le C\tau^{\alpha-2},
\end{equation}
where the constant $C>0$ is independent of $n$. Further we can use the H\"older inequality and the Fubini lemma to deduce that 
\begin{equation*}
 \begin{aligned}
  \E[S_1]
  &\le \E\Big[\sum_{n=1}^\infty\big(\int_0^t \l f_1(\cdot,t+h-\tau,\omega)
  -f_1(\cdot,t-\tau,\omega),\phi_n\rd^2\ d\tau\big)
  \ \big(\int_0^t C\tau^{\alpha-2}\ d\tau \big)\Big]\\
  &\le Ct^{\alpha-1} \int_0^t \E\left[\| f_1(\cdot,t+h-\tau,\omega)-f_1(\cdot,t-\tau,\omega)\|^2_{L^2(D)} \right]\ d\tau\to 0, \quad \text{as}\ h\to 0\!+.
 \end{aligned}
\end{equation*}

For $S_2$, similarly we have  
\begin{equation*}
 \begin{aligned}
  \E[S_2]&\le\E\Big[\sum_{n=1}^\infty \big(\int_t^{t+h} \l f_1(\cdot,t+h-\tau,\omega),\phi_n\rd^2\ d\tau\big)
  \ \big(\int_t^{t+h}\lambda_n\tau^{2\alpha-2}E^2_{\alpha,\alpha}(-\lambda_n \tau^\alpha)~d\tau\big)\Big]\\
  &\le C\left[(t+h)^{\alpha-1}-t^{\alpha-1} \right]\int_t^{t+h} \| f_1(\cdot,t+h-\tau,\cdot)\|_{L^2(D\times\Omega)}^2\ d\tau\\
  &\le C\left[(t+h)^{\alpha-1}-t^{\alpha-1} \right]\ h \|f_1\|^2_{C([0,\infty);L^2(D\times\Omega))}\to 0, \quad \text{as}\ h\to 0\!+.
 \end{aligned}
\end{equation*}

For $S_3$, the Ito formula in Lemma \ref{Ito} yields  
\begin{equation*}
\begin{aligned}
 \E[S_3]
 =& \E\Big[\sum_{n=1}^\infty \int_0^t \l f_2(\cdot,t+h-\tau,\omega)-f_2(\cdot,t-\tau,\omega),\phi_n\rd^2
   \ \lambda_n\tau^{2\alpha-2}E^2_{\alpha,\alpha}(-\lambda_n \tau^\alpha)~d\tau \Big]\\
 \le& C \int_0^t \| f_2(\cdot,t+h-\tau,\cdot)-f_2(\cdot,t-\tau,\cdot)\|_{L^2(D\times\Omega)}^2   \ \tau^{\alpha-2}~d\tau.
\end{aligned}
\end{equation*}
From the facts $\alpha>1$ and $\|f_2\|_{C([0,\infty);L^2(D\times\Omega))}<\infty$, applying the dominated convergence theorem gives that 
\begin{equation*}
\begin{aligned}
 &\lim_{h\to0+}\int_0^t \| f_2(\cdot,t+h-\tau,\cdot)-f_2(\cdot,t-\tau,\cdot)\|_{L^2(D\times\Omega)}^2\ \tau^{\alpha-2}~d\tau\\
  &= \int_0^t \lim_{h\to0+}\| f_2(\cdot,t+h-\tau,\cdot)
  -f_2(\cdot,t-\tau,\cdot)\|_{L^2(D\times\Omega)}^2
   \ \tau^{\alpha-2}~d\tau=0,
\end{aligned}
\end{equation*}
which implies $\lim_{h\to0+} \E[S_3]=0$.

For $S_4$, using the similar argument for $S_3$, we get
\begin{equation*}
 \begin{aligned}
 \E[S_4] &=\sum_{n=1}^\infty \E\Big[\int_t^{t+h} \l f_2(\cdot,t+h-\tau,\omega),\phi_n\rd^2\ \lambda_n\tau^{2\alpha-2}E^2_{\alpha,\alpha}(-\lambda_n \tau^\alpha)\ d\tau \Big]\\
   &\le C \|f_2\|^2_{C([0,\infty);L^2(D\times\Omega))}\int_t^{t+h} 
   \tau^{\alpha-2}\ d\tau\to 0,\quad \text{as}\ h\to 0\!+.
 \end{aligned}
\end{equation*}

Collecting the estimates for $S_j,\ j=1,2,3,4$, it holds that    
\begin{equation*}
 \lim_{h\to0+} \| u(\cdot,t+h,\cdot)-u(\cdot,t,\cdot)\|^2_{L^2(\Omega;H^1(D))}=0, 
\end{equation*}
and the result for the case of $h\to 0-$ can be proved similarly. 
Hence, we have $u\in C([0,\infty);L^2(\Omega;H^1(D)))$.

Next, we deduce an estimate of $\|u\|_{C([0,T];L^2(\Omega;H^1(D)))}$.
 Fixing $t\in [0,T]\subset[0,\infty),$ the H\"older inequality, the Ito formula in Lemma \ref{Ito} and the previous arguments give that  
\begin{equation*}
 \begin{aligned}
 &\|u(\cdot,t,\cdot)\|^2_{L^2(\Omega;H^1(D))}\\
   &\le  C \sum_{n=1}^\infty  \Big[\int_0^t\lambda_n\tau^{2\alpha-2}E^2_{\alpha,\alpha}(-\lambda_n\tau^\alpha)~d\tau\Big]~
  \E\Big[\int_0^t \l f_1(\cdot,t-\tau,\omega),\phi_n\rd^2\ d\tau\Big]\\
  &\quad+C \sum_{n=1}^\infty \int_0^t\lambda_n\tau^{2\alpha-2} E^2_{\alpha,\alpha}(-\lambda_n\tau^\alpha) 
  \ \E\Big[\l f_2(\cdot,t-\tau,\omega),\phi_n\rd^2\Big]~d\tau\\
  &=:C(S_5+S_6).
  \end{aligned}  
 \end{equation*}
 With \eqref{inequality_2}, we see that 
 \begin{equation*}
 \begin{aligned}
  &S_5\le C t^{\alpha-1}\sum_{n=1}^\infty  \E\Big[\int_0^t \l f_1(\cdot,t-\tau,\omega),\phi_n\rd^2\ d\tau\Big]
  \le CT^{\alpha-1} \|f_1\|^2_{L^2(D\times(0,T)\times \Omega)},\\
  &S_6\le C \sum_{n=1}^\infty \int_0^t\tau^{\alpha-2} 
  \ \E\Big[\l f_2(\cdot,t-\tau,\omega),\phi_n\rd^2\Big]~d\tau
  \le C T^{\alpha-1}\|f_2\|^2_{C([0,\infty);L^2(D\times\Omega))}.
  \end{aligned}
 \end{equation*}
The estimates of $S_5$ and $S_6$ give the desired result and complete the proof. 
\end{proof}

\subsection{$L^2$-regularity.}

The $L^2$-regularity is discussed in this subsection. 
\begin{lemma}\label{lemma_regularity_l_source}
We suppose $f_1,f_2$ satisfy Assumption \ref{assumption}, $T\in (0,\infty)$ and $\alpha\in(0,1)\cup(1,2)$. Then we have the following $L^2$-regularity of $u$ in \eqref{equality_4},   
 \begin{equation*}
\|u\|_{L^2(D\times(0,T)\times\Omega)}
 \le C \big( T^{\alpha+\delta}\|f_1\|_{L^2(D\times(0,T)\times\Omega)}
 +T^{\alpha+\delta-1/2}\|f_2\|_{L^2(D\times(0,T)\times\Omega)}\big).
\end{equation*}
\end{lemma}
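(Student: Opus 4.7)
The plan is to exploit the orthonormal expansion of $u$ in the basis $\{\phi_n\}$ and handle the deterministic and stochastic parts of \eqref{equality_4} separately. Writing $u(\cdot,t,\omega)=\sum_{n=1}^\infty (v_{n,1}(t,\omega)+v_{n,2}(t,\omega))\phi_n$ and using Parseval's identity together with $(a+b)^2\le 2a^2+2b^2$, we obtain
\begin{equation*}
\|u(\cdot,t,\omega)\|_{L^2(D)}^2 \le 2\sum_{n=1}^\infty v_{n,1}(t,\omega)^2 + 2\sum_{n=1}^\infty v_{n,2}(t,\omega)^2.
\end{equation*}
So it suffices to estimate the $L^2(D\times(0,T)\times\Omega)$-norms of the two sums. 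In both cases the key analytic fact is the Mittag-Leffler bound in Lemma \ref{mittag_bound}, which gives $|E_{\alpha,\alpha+\delta}(-\lambda_n\tau^\alpha)|\le C$ uniformly in $n$ and $\tau\ge 0$; combined with the constraint $\alpha+\delta>1/2$ from \eqref{condition_delta} this makes $\int_0^t \tau^{2\alpha+2\delta-2}\,d\tau = t^{2\alpha+2\delta-1}/(2\alpha+2\delta-1)$ finite.

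For the deterministic part, I apply the Cauchy–Schwarz inequality in time to $v_{n,1}$:
\begin{equation*}
v_{n,1}(t,\omega)^2 \le \Big(\int_0^t \tau^{2\alpha+2\delta-2}E^2_{\alpha,\alpha+\delta}(-\lambda_n\tau^\alpha)\,d\tau\Big)\Big(\int_0^t \langle f_1(\cdot,t-\tau,\omega),\phi_n\rangle^2\,d\tau\Big).
\end{equation*}
The first factor is bounded by $Ct^{2\alpha+2\delta-1}\le CT^{2\alpha+2\delta-1}$. Summing in $n$ via Parseval inside the remaining time integral, taking expectation, and swapping the order of integration in $t$ and $\tau$ via Fubini (after the substitution $s=t-\tau$) yields
\begin{equation*}
\int_\Omega\int_0^T\sum_{n=1}^\infty v_{n,1}(t,\omega)^2\,dt\,dP(\omega) \le CT^{2\alpha+2\delta}\,\|f_1\|_{L^2(D\times(0,T)\times\Omega)}^2,
\end{equation*}
which, after taking square roots, contributes the $T^{\alpha+\delta}\|f_1\|$ term.

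For the stochastic part, Cauchy–Schwarz must be replaced by the Itô isometry of Lemma \ref{Ito}, valid thanks to the measurability and $\mathcal{F}_t$-adaptedness in Assumption \ref{assumption}:
\begin{equation*}
\E[v_{n,2}(t,\omega)^2] = \E\Big[\int_0^t \langle f_2(\cdot,t-\tau,\omega),\phi_n\rangle^2 \tau^{2\alpha+2\delta-2}E^2_{\alpha,\alpha+\delta}(-\lambda_n\tau^\alpha)\,d\tau\Big].
\end{equation*}
Summing over $n$, using the uniform Mittag-Leffler bound, integrating over $t\in(0,T)$, and applying Fubini with the substitution $s=t-\tau$ (so that $\int_s^T(t-s)^{2\alpha+2\delta-2}\,dt\le CT^{2\alpha+2\delta-1}$) gives
\begin{equation*}
\int_\Omega\int_0^T\sum_{n=1}^\infty v_{n,2}(t,\omega)^2\,dt\,dP(\omega) \le CT^{2\alpha+2\delta-1}\,\|f_2\|_{L^2(D\times(0,T)\times\Omega)}^2,
\end{equation*}
producing the $T^{\alpha+\delta-1/2}\|f_2\|$ term. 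The noteworthy point rather than a true obstacle is that the stochastic integral loses one half-power of $T$ compared with the deterministic one, which is exactly the typical Itô scaling and is already reflected in the asymmetry of the exponents in the stated bound. Combining the two estimates completes the proof.
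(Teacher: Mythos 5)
Your proposal is correct and follows essentially the same route as the paper: the same decomposition of $u$ into the $v_{n,1}$ and $v_{n,2}$ series, the uniform Mittag--Leffler bound from Lemma \ref{mittag_bound} together with $\alpha+\delta>1/2$, and the It\^o isometry for the stochastic term. The only (cosmetic) difference is that for the deterministic convolution you use Cauchy--Schwarz in $\tau$ followed by Fubini, whereas the paper invokes Young's convolution inequality with $\|t^{\alpha+\delta-1}E_{\alpha,\alpha+\delta}(-\lambda_n t^\alpha)\|_{L^1(0,T)}\le CT^{\alpha+\delta}$; both yield the same power $T^{2\alpha+2\delta}$ for the squared norm.
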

\begin{proof}
For each $\omega\in \Omega$, from the notation \eqref{equality_4}, a direct calculation yields
 \begin{equation*}
  \begin{aligned}
   \|u(\cdot,\cdot,\omega)\|^2_{L^2(D\times(0,T))}
   &=\int_0^T \|u(\cdot,t,\omega)\|^2_{L^2(D)} \ dt\\
   &=\int_0^T\sum_{n=1}^\infty [v_{n,1}(t,\omega)+v_{n,2}(t,\omega)]^2\ dt
   \\
   &\le C  \int_0^T\sum_{n=1}^\infty v_{n,1}^2(t,\omega)\ dt
   +C\int_0^T\sum_{n=1}^\infty v_{n,2}^2(t,\omega)\ dt\\
   &=C \sum_{n=1}^\infty\|v_{n,1}(\cdot,\omega)\|_{L^2(0,T)}^2
   +C\sum_{n=1}^\infty\|v_{n,2}(\cdot,\omega)\|_{L^2(0,T)}^2,
  \end{aligned}
 \end{equation*}
where the last equality is due to the monotone convergence theorem. The above estimate further implies that
\begin{equation*}
\begin{aligned}
 \|u\|^2_{L^2(D\times(0,T)\times\Omega)}
 &\le C \sum_{n=1}^\infty \|v_{n,1}\|_{L^2((0,T)\times \Omega)}^2 
 +C\sum_{n=1}^\infty\|v_{n,2}\|_{L^2((0,T)\times \Omega)}^2\\
 &=: C(S_1+S_2).
 \end{aligned}
\end{equation*}
Since $v_{n,1}(t,\o)=\left(t^{\alpha+\delta-1}E_{\alpha,\alpha+\delta}(-\lambda_n t^\alpha) \right)* \l f_1(\cdot,t,\omega),\phi_n\rd,$  
then by Young's convolution inequality, we see that
\begin{equation*}
 \|v_{n,1}(\cdot,\omega)\|_{L^2(0,T)}^2
 \le \|t^{\alpha+\delta-1}E_{\alpha,\alpha+\delta}(-\lambda_n t^\alpha)\|_{L^1(0,T)}^2
 \ \|\l f_1(\cdot,t,\omega),\phi_n\rd\|_{L^2(0,T)}^2. 
\end{equation*} 
Moreover, from Lemma \ref{mittag_bound}, it follows that   
\begin{equation*}
  \|t^{\alpha+\delta-1}E_{\alpha,\alpha+\delta}(-\lambda_n t^\alpha)\|_{L^1(0,T)} \le C\|t^{\alpha+\delta-1}\|_{L^1(0,T)}
  \le C T^{\alpha+\delta},
\end{equation*}
which combined with the monotone convergence theorem gives 
\begin{equation*}
\begin{aligned}
 S_1&\le CT^{2\alpha+2\delta}\int_\Omega \sum_{n=1}^\infty  
 \|\l f_1(\cdot,t,\omega),\phi_n\rd\|_{L^2(0,T)}^2 \ dP(\o)\\
 &=CT^{2\alpha+2\delta}\int_\Omega\int_0^T \sum_{n=1}^\infty  
 \l f_1(\cdot,t,\omega),\phi_n\rd^2 \ dt\ dP(\o)\\
 &=CT^{2\alpha+2\delta} \|f_1\|_{L^2(D\times(0,T)\times\Omega)}^2.
 \end{aligned}
\end{equation*}

For $S_2,$ we have  
\begin{equation*}
 \begin{aligned}
  S_2&=\sum_{n=1}^\infty\|v_{n,2}\|_{L^2((0,T)\times \Omega)}^2
  =\sum_{n=1}^\infty \int_0^T \int_\Omega v_{n,2}^2(t,\omega)
  \ dP(\o) \ dt.
 \end{aligned}
\end{equation*} 
Then Lemma \ref{mittag_bound} and the Ito formula in Lemma \ref{Ito}  give that
\begin{equation*}
\begin{aligned}
 \int_\Omega v_{n,2}^2(t,\omega)\ dP(\o)
 &=\E\Big[ \int_0^t \tau^{2\alpha+2\delta-2}E_{\alpha,\alpha+\delta}^2(-\lambda_n\tau^\alpha) 
 \ \l f_2(\cdot,t-\tau,\omega),\phi_n\rd^2\ d\tau \Big]\\
 &\le C \E\Big[ \int_0^t \tau^{2\alpha+2\delta-2}
 \ \l f_2(\cdot,t-\tau,\omega),\phi_n\rd^2\ d\tau \Big];
 \end{aligned}
\end{equation*}
meanwhile using Young's convolution inequality again we have 
\begin{equation*}
\begin{aligned}
 S_2&\le C\int_0^T \int_0^t \tau^{2\alpha+2\delta-2}\ \E\Big[\sum_{n=1}^\infty\l f_2(\cdot,t-\tau,\omega),\phi_n\rd^2 \Big]\ d\tau\ dt  \\
  &\le C\|t^{2\alpha+2\delta-2} \|_{L^1(0,T)}\ \int_0^T \E\Big[\sum_{n=1}^\infty\l f_2(\cdot,t,\omega),\phi_n\rd^2 \Big]\ dt\\
  &\le C T^{2\alpha+2\delta-1}\|f_2\|^2_{L^2(D\times(0,T)\times\Omega)}.
  \end{aligned}
\end{equation*}

By the estimates for $S_1$ and $S_2$, we finally obtain
\begin{equation*}
 \|u\|_{L^2(D\times(0,T)\times\Omega)}
 \le C \big( T^{\alpha+\delta}\|f_1\|_{L^2(D\times(0,T)\times\Omega)}
 +T^{\alpha+\delta-1/2}\|f_2\|_{L^2(D\times(0,T)\times\Omega)}\big),
\end{equation*}
and complete the proof.
\end{proof}

When $\alpha\in(1/2,1)$, from condition \eqref{condition_delta}, we can set $\delta=0$. In this case, we can extend the $L^2$-regularity in Lemma \ref{lemma_regularity_l_source} to $D\times(0,\infty)\times \Omega$. 
\begin{corollary}\label{infty}
For the case of $\alpha\in(1/2,1)$, if stronger conditions $f_j\in L^2(D\times(0,\infty)\times\Omega)$ for $j=1,2$ are provided, then $u$ in \eqref{equality_4} satisfies the following regularity, 
 \begin{equation*}
 \|u\|_{L^2(D\times(0,\infty)\times\Omega)}\le C \big( \|f_1\|_{L^2(D\times(0,\infty)\times\Omega)}
 +\|f_2\|_{L^2(D\times(0,\infty)\times\Omega)}\big).
\end{equation*}
\end{corollary}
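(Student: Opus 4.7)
The plan is to follow the proof of Lemma \ref{lemma_regularity_l_source} line by line, replacing the finite horizon $T$ by $\infty$, and to verify that the kernel integrals appearing in the Young and It\^o estimates remain uniformly bounded in $n$ under the stronger assumptions $\alpha\in(1/2,1)$ and $\delta=0$. After projecting onto $\{\phi_n\}_{n=1}^\infty$, I decompose $u$ into the deterministic part $\sum_n v_{n,1}\phi_n$ and the stochastic part $\sum_n v_{n,2}\phi_n$ and handle them separately.

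For the deterministic term, Young's convolution inequality applied on $(0,\infty)$ gives $\|v_{n,1}(\cdot,\omega)\|_{L^2(0,\infty)}\le \|\tau^{\alpha-1}E_{\alpha,\alpha}(-\lambda_n\tau^\alpha)\|_{L^1(0,\infty)}\,\|\l f_1(\cdot,\tau,\omega),\phi_n\rd\|_{L^2(0,\infty)}$. The key computation is that Lemma \ref{lem-ml-int} with $\beta=\alpha$, together with the standard identity $\lambda t^\alpha E_{\alpha,\alpha+1}(-\lambda t^\alpha)=1-E_{\alpha,1}(-\lambda t^\alpha)$ and the decay of $E_{\alpha,1}$ from Lemma \ref{mittag_bound}, yields
\[
\|\tau^{\alpha-1}E_{\alpha,\alpha}(-\lambda_n\tau^\alpha)\|_{L^1(0,\infty)}=\frac{1}{\lambda_n}\le\frac{1}{\lambda_1},
\]
uniformly in $n$. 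For the stochastic term, It\^o's isometry (Lemma \ref{Ito}) followed by Fubini reduces $\E\|v_{n,2}\|_{L^2(0,\infty)}^2$ to $\|\tau^{2\alpha-2}E_{\alpha,\alpha}^2(-\lambda_n\tau^\alpha)\|_{L^1(0,\infty)}\,\|\l f_2(\cdot,\tau,\omega),\phi_n\rd\|_{L^2((0,\infty)\times\Omega)}^2$. Inserting $|E_{\alpha,\alpha}(-\lambda_n\tau^\alpha)|\le C/(1+\lambda_n\tau^\alpha)$ and splitting the integral at $\tau=1$ produces a local contribution $\int_0^1\tau^{2\alpha-2}\,d\tau=1/(2\alpha-1)$ and a tail contribution bounded by $C\lambda_n^{-2}\int_1^\infty\tau^{-2}\,d\tau\le C\lambda_1^{-2}$, both finite and uniform in $n$. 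Summing over $n$ and invoking Parseval's identity in $L^2(D)$ then yields the claimed bound.

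The main obstacle, and the sole reason for the restriction $\alpha>1/2$, is the local integrability of $\tau^{2\alpha-2}$ at the origin. This singularity is produced by the It\^o isometry applied to the $f_2$-term and is locally integrable only when $2\alpha-2>-1$; for $\alpha\le 1/2$ the Mittag-Leffler decay cannot compensate for it and the present approach breaks down. All other steps mirror those already carried out in Lemma \ref{lemma_regularity_l_source}, with the verification that the bounding constants are independent of $T$ being the only new ingredient.
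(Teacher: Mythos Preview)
Your proposal is correct and follows essentially the same approach as the paper's proof. The only point left implicit in your sketch is that equating $\|\tau^{\alpha-1}E_{\alpha,\alpha}(-\lambda_n\tau^\alpha)\|_{L^1(0,\infty)}$ with the (unsigned) integral computed via Lemma~\ref{lem-ml-int} requires the nonnegativity of $E_{\alpha,\alpha}(-t)$ for $t\ge0$, which the paper secures by invoking the complete monotonicity in Lemma~\ref{mittag_monotone}; with that added, your argument matches the paper's line for line.
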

\begin{proof}
 The proof of Lemma \ref{lemma_regularity_l_source} gives that 
 \begin{equation*}
\begin{aligned}
 \|u\|^2_{L^2(D\times(0,\infty)\times\Omega)}
 &\le C \sum_{n=1}^\infty \|v_{n,1}\|_{L^2((0,\infty)\times \Omega)}^2 
 +C\sum_{n=1}^\infty\|v_{n,2}\|_{L^2((0,\infty)\times \Omega)}^2\\
 &=: C(S_1+S_2),
 \end{aligned}
\end{equation*}
and 
\begin{equation*}
\begin{aligned}
 S_1&\le  \sum_{n=1}^\infty \|t^{\alpha-1}E_{\alpha,\alpha}(-\lambda_n t^\alpha)\|_{L^1(0,\infty)}^2
 \ \|\l f_1(\cdot,t,\omega),\phi_n\rd\|_{L^2((0,\infty)\times\Omega)}^2,\\
 S_2&\le \sum_{n=1}^\infty \|t^{2\alpha-2}E_{\alpha,\alpha}^2(-\lambda_nt^\alpha)\|_{L^1(0,\infty)}\ \|\l f_2(\cdot,t,\omega),\phi_n\rd\|_{L^2((0,\infty)\times\Omega)}^2.
\end{aligned}
\end{equation*}
Since $\alpha\in (1/2,1)$, Lemma \ref{mittag_monotone} can be applied and it yields that $E_{\alpha,\alpha}(-\lambda_n t^\alpha)\ge 0$. This with Lemma \ref{mittag_derivative} leads to  
\begin{equation*}
\begin{aligned}
  \|t^{\alpha-1}E_{\alpha,\alpha}(-\lambda_n t^\alpha)\|_{L^1(0,\infty)}
 &=\int_0^\infty t^{\alpha-1}E_{\alpha,\alpha}(-\lambda_n t^\alpha)\ dt\\
 &=[-\lambda_n^{-1}E_{\alpha,1}(-\lambda_n t^\alpha)]\big|_0^\infty
 \le \lambda_1^{-1}<\infty.
 \end{aligned}
\end{equation*}
Also, \eqref{inequality_1} with $\delta=0$ gives that $$\|t^{2\alpha-2}E_{\alpha,\alpha}^2(-\lambda_nt^\alpha)\|_{L^1(0,\infty)}\le C<\infty,$$ and the constant $C>0$ does not depend on $n$. Hence, we have  
\begin{equation*}
S_1\le C\|f_1\|_{L^2(D\times(0,\infty)\times\Omega)}^2,
\ \ S_2\le C \|f_2\|_{L^2(D\times(0,\infty)\times\Omega)}^2,
\end{equation*}
and complete the proof.
\end{proof}

If $f_2$ is imposed a stronger assumption $f_2\in L^2((0,T)\times \Omega;H^1(D))$, the smoothness of $u$ will be promoted. 
\begin{lemma}\label{lemma_regularity_h_source}
Given the same assumptions in Lemma \ref{lemma_regularity_l_source} and $f_2\in L^2((0,T)\times\Omega;H^1(D))$, the function $u$ in \eqref{equality_4} satisfies 
 \begin{equation*}
  \|u\|_{L^2((0,T)\times \Omega;H^1(D))}\le C\big(T^{\alpha/2+\delta}\|f_1\|_{L^2(D\times(0,T)\times\Omega)}+\|f_2\|_{L^2((0,T)\times \Omega;H^1(D))}\big).
 \end{equation*}
\end{lemma}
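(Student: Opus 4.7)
The plan is to mimic the split used in the proof of Lemma~\ref{lemma_regularity_l_source} but to carry one extra factor of $\lambda_n$ through all the sums, since bounding the $L^2((0,T)\times\Omega;H^1(D))$-norm reduces, via $\|v\|_{H^1(D)}\le C\|v\|_{D(\A^{1/2})}$, to bounding the $L^2((0,T)\times\Omega;D(\A^{1/2}))$-norm of $u$. Writing $u(x,t,\omega)=\sum_{n=1}^\infty (v_{n,1}(t,\omega)+v_{n,2}(t,\omega))\phi_n(x)$ and applying $(a+b)^2\le 2a^2+2b^2$ together with the orthonormality of $\{\phi_n\}$, the proof reduces to controlling
\begin{equation*}
S_j:=\sum_{n=1}^\infty \lambda_n\|v_{n,j}\|_{L^2((0,T)\times\Omega)}^2,\qquad j=1,2.
\end{equation*}

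For $S_1$ I would recognize $v_{n,1}(t,\omega)$ as the temporal convolution of the kernel $g_n(\tau):=\tau^{\alpha+\delta-1}E_{\alpha,\alpha+\delta}(-\lambda_n\tau^\alpha)$ with the coefficient $\l f_1(\cdot,\cdot,\omega),\phi_n\rd$ and apply Young's inequality in the form $\|p*q\|_{L^2}\le\|p\|_{L^1}\|q\|_{L^2}$. Lemma~\ref{lem-ml-int} evaluates the $L^1$-norm as $\|g_n\|_{L^1(0,T)}=T^{\alpha+\delta}E_{\alpha,\alpha+\delta+1}(-\lambda_n T^\alpha)$, and Lemma~\ref{mittag_bound} then gives
\begin{equation*}
\lambda_n\|g_n\|^2_{L^1(0,T)}\le CT^{\alpha+2\delta}\cdot\frac{\lambda_n T^\alpha}{(1+\lambda_n T^\alpha)^2}\le CT^{\alpha+2\delta}.
\end{equation*}
Summing in $n$ by monotone convergence yields $S_1\le CT^{\alpha+2\delta}\|f_1\|^2_{L^2(D\times(0,T)\times\Omega)}$, whose square root produces the desired $T^{\alpha/2+\delta}\|f_1\|$-term.

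For $S_2$ the stochastic integral is handled via the Ito isometry of Lemma~\ref{Ito}, giving $\E[v_{n,2}^2(t,\omega)]=\E\big[\int_0^t g_n^2(\tau)\,\l f_2(\cdot,t-\tau,\omega),\phi_n\rd^2\,d\tau\big]$. The crucial observation is to not absorb the factor $\lambda_n$ into $g_n^2$, but rather to shift it onto the $f_2$-coefficient. Indeed, the uniform bound $|E_{\alpha,\alpha+\delta}(-\lambda_n\tau^\alpha)|\le C$ gives $g_n^2(\tau)\le C\tau^{2\alpha+2\delta-2}$ independently of $n$, while
\begin{equation*}
\sum_{n=1}^\infty \lambda_n\,\l f_2(\cdot,s,\omega),\phi_n\rd^2=\|\A^{1/2}f_2(\cdot,s,\omega)\|_{L^2(D)}^2\le C\|f_2(\cdot,s,\omega)\|_{H^1(D)}^2.
\end{equation*}
Exchanging the sum with the integral via monotone convergence, then using Fubini and Young's convolution inequality on $[0,T]$, I expect to arrive at $S_2\le C\big(\int_0^T\tau^{2\alpha+2\delta-2}\,d\tau\big)\|f_2\|_{L^2((0,T)\times\Omega;H^1(D))}^2$. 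The time integral is finite thanks to the standing condition $\alpha+\delta>1/2$ in~\eqref{condition_delta}, and the resulting $T$-dependent factor is absorbed into the generic constant $C$.

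The main obstacle lies in the choice of where to place the $\lambda_n$-factor when estimating $S_2$. Imitating the treatment of $S_1$ and bounding $\lambda_n g_n^2(\tau)\le C\tau^{\alpha+2\delta-2}$ via $\lambda_n/(1+\lambda_n\tau^\alpha)^2\le C\tau^{-\alpha}$ would leave an integrand that fails to be integrable at $\tau=0$ whenever $\alpha+2\delta\le 1$, which can happen under the standing hypothesis $\alpha+\delta>1/2$. The remedy is to transfer $\lambda_n$ onto the Fourier coefficient of $f_2$, which is precisely enabled by the extra regularity assumption $f_2\in L^2((0,T)\times\Omega;H^1(D))$ stated in the lemma.
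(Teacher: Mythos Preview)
Your overall strategy is the paper's: the same $S_1$/$S_2$ decomposition carrying the extra factor $\lambda_n$, Young's convolution inequality for $S_1$, Ito isometry followed by Young for $S_2$, and in particular the key move of transferring $\lambda_n$ onto the Fourier coefficient of $f_2$ in $S_2$ rather than onto the kernel.

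One step fails as written, however. Lemma~\ref{lem-ml-int} evaluates $\int_0^T g_n(\tau)\,d\tau$, not $\|g_n\|_{L^1(0,T)}$; the two coincide only when $g_n\ge 0$ on $[0,T]$. For $\alpha\in(0,1)$ this holds by Lemma~\ref{mittag_monotone}, but for $\alpha\in(1,2)$ the function $E_{\alpha,\alpha+\delta}(-t)$ oscillates, so your identity $\|g_n\|_{L^1(0,T)}=T^{\alpha+\delta}E_{\alpha,\alpha+\delta+1}(-\lambda_n T^\alpha)$ is false in general and the bound on $\lambda_n\|g_n\|_{L^1}^2$ is unjustified. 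The paper sidesteps this by using the pointwise absolute-value bound from Lemma~\ref{mittag_bound} together with~\eqref{esti-useful1}:
\[
\lambda_n^{1/2}|g_n(\tau)|\le C\,\tau^{\alpha/2+\delta-1}\cdot\frac{(\lambda_n\tau^\alpha)^{1/2}}{1+\lambda_n\tau^\alpha}\le C\,\tau^{\alpha/2+\delta-1},
\]
whence $\lambda_n^{1/2}\|g_n\|_{L^1(0,T)}\le CT^{\alpha/2+\delta}$ uniformly in $\alpha\in(0,1)\cup(1,2)$. As a minor secondary point, for $S_2$ the paper invokes the $n$-uniform bound~\eqref{inequality_1} on $\int_0^\infty \tau^{2\alpha+2\delta-2}E_{\alpha,\alpha+\delta}^2(-\lambda_n\tau^\alpha)\,d\tau$ rather than your cruder $|E_{\alpha,\alpha+\delta}|\le C$; this keeps the constant in front of $\|f_2\|$ independent of $T$, matching the stated estimate, whereas your route leaves an extra factor $T^{2\alpha+2\delta-1}$.
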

\begin{proof}
By virtue of the relation $D(\mathcal A^{1/2}) = H_0^1(D)$, a direct calculation gives that  
 \begin{equation*}
  \begin{aligned}
   \|u(\cdot,\cdot,\omega)\|^2_{L^2(0,T;H^1(D))}
     &\le C\int_0^T\|\A^{1/2} u(\cdot,t,\omega)\|^2_{L^2(D)}\ dt\\
   &\le C  \int_0^T\sum_{n=1}^\infty \lambda_nv_{n,1}^2(t,\o)\ dt
   +C\int_0^T\sum_{n=1}^\infty \lambda_nv_{n,2}^2(t,\omega)\ dt\\
   &=C \sum_{n=1}^\infty\|\lambda_n^{1/2}v_{n,1}(\cdot,\o)\|_{L^2(0,T)}^2
   +C\sum_{n=1}^\infty\|\lambda_n^{1/2}v_{n,2}(\cdot,\omega)\|_{L^2(0,T)}^2,
  \end{aligned}
 \end{equation*}
 which leads to  
\begin{equation*}
\begin{aligned}
\|u\|^2_{L^2((0,T)\times \Omega;H^1(D))}
 &\le C \sum_{n=1}^\infty\|\lambda_n^{1/2}v_{n,1}\|_{L^2((0,T)\times \Omega)}^2
 +C\sum_{n=1}^\infty\|\lambda_n^{1/2}v_{n,2}\|_{L^2((0,T)\times \Omega)}^2\\
 &=: C(S_1+S_2).
 \end{aligned}
\end{equation*}

We next estimate the terms $S_1$ and $S_2$ separately. Firstly we conclude from Lemma \ref{lem-ml-int} and the inequality \eqref{esti-useful1} that  
\begin{equation*}
\begin{aligned}
  \|\lambda_n^{1/2}t^{\alpha+\delta-1}E_{\alpha,\alpha+\delta}(-\lambda_n t^\alpha)\|_{L^1(0,T)}
 \le C\int_0^T t^{\alpha/2+\delta-1}\frac{\lambda_n^{1/2} t^{\alpha/2}}{1+\lambda_n t^\alpha}\ dt
 \le CT^{\alpha/2+\delta}.
 \end{aligned}
\end{equation*}
This with Young's convolution inequality yields
\begin{equation*}
\begin{aligned}
 S_1&\le \int_\Omega \sum_{n=1}^\infty \|\lambda_n^{1/2}t^{\alpha+\delta-1}
 E_{\alpha,\alpha+\delta} (-\lambda_n t^\alpha)\|_{L^1(0,T)}^2 \ 
 \|\l f_1(\cdot,t,\omega),\phi_n\rd\|_{L^2(0,T)}^2\ dP(\o)\\
 &\le CT^{\alpha+2\delta}\int_\Omega\sum_{n=1}^\infty \|\l f_1(\cdot,t,\omega),\phi_n\rd\|_{L^2(0,T)}^2 \ dP(\o)
 =CT^{\alpha+2\delta}\|f_1\|^2_{L^2(D\times(0,T)\times\Omega)}.
\end{aligned}
\end{equation*}
For the bound of $S_2,$ Young's convolution inequality, the Ito isometry formula in Lemma \ref{Ito} and \eqref{inequality_1} give that 
 \begin{align*}
  S_2&=\sum_{n=1}^\infty \int_0^T \E[\lambda_nv_{n,2}^2(\cdot,\omega)] \ dt\\
  &\le \sum_{n=1}^\infty \Big[\int_0^T t^{2\alpha+2\delta-2}E_{\alpha,\alpha+\delta}^2  (-\lambda_nt^\alpha)\ dt\Big]\ \E\Big[\int_0^T 
  \lambda_n\l f_2(\cdot,t,\omega),\phi_n\rd^2\ dt\Big]\\
   &\le C \E\Big[\int_0^T\| f_2(\cdot,t,\omega)\|_{H^1(D)}^2\ dt\Big]
   = C \|f_2\|^2_{L^2((0,T)\times \Omega;H^1(D))}.
 \end{align*}
The estimates of $S_1$ and $S_2$ give the desired result and complete the proof.
\end{proof}

Thanks to the properties of the Mittag-Leffler function in Lemmas \ref{mittag_monotone} and \ref{mittag_derivative}, a sharp spatial regularity of the function $u$ in \eqref{equality_4} can be achieved, with the conditions 
$\alpha\in(1/2,1)$, $f_1\in L^2(D\times(0,\infty)\times\Omega)$ 
and $f_2\in L^2((0,\infty)\times \Omega;H^2(D))$.
\begin{corollary}\label{infty_h} 
Provided 
$$\alpha\in(1/2,1),\ f_1\in L^2(D\times(0,\infty)\times\Omega),\ f_2\in L^2((0,\infty)\times \Omega;H^2(D)),$$ 
the following regularity estimate of $u$ in \eqref{equality_4} holds,  
 \begin{equation*}
  \|u\|_{L^2((0,\infty)\times \Omega;H^2(D))}\le C\big(\|f_1\|_{L^2(D\times(0,\infty)\times\Omega)}
  + \|f_2\|_{L^2((0,\infty)\times \Omega;H^2(D))}\big).
 \end{equation*}
\end{corollary}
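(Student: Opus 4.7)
The plan is to adapt the proof of Lemma~\ref{lemma_regularity_h_source} to obtain $H^2$-regularity on the half-line $(0,\infty)$, taking advantage of the complete monotonicity of the Mittag-Leffler functions available for $\alpha\in(0,1)$. Following Corollary~\ref{infty}, we set $\delta=0$, which is admissible under condition~\eqref{condition_delta} since $\alpha>1/2$, so that the kernel in~\eqref{equality_4} reduces to $h_n(t):=t^{\alpha-1}E_{\alpha,\alpha}(-\lambda_n t^\alpha)$. Using the embedding $D(\A)\subset H^2(D)$ together with $\A\phi_n=\lambda_n\phi_n$, we split
\begin{equation*}
\|u\|_{L^2((0,\infty)\times\Omega;H^2(D))}^2
\le C\sum_{n=1}^\infty\lambda_n^2\|v_{n,1}\|_{L^2((0,\infty)\times\Omega)}^2
+C\sum_{n=1}^\infty\lambda_n^2\|v_{n,2}\|_{L^2((0,\infty)\times\Omega)}^2
=:C(S_1+S_2).
\end{equation*}

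For $S_1$, I regard $v_{n,1}$ as the deterministic convolution $h_n\ast\l f_1(\cdot,\cdot,\o),\phi_n\rd$. By Lemma~\ref{mittag_monotone} the kernel $h_n$ is nonnegative on $(0,\infty)$, and combining Lemma~\ref{mittag_derivative} with the decay from Lemma~\ref{mittag_bound} gives
\begin{equation*}
\|\lambda_n h_n\|_{L^1(0,\infty)}=\int_0^\infty\lambda_n t^{\alpha-1}E_{\alpha,\alpha}(-\lambda_n t^\alpha)\,dt=-\bigl[E_{\alpha,1}(-\lambda_n t^\alpha)\bigr]_0^\infty=1
\end{equation*}
uniformly in $n$. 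Young's convolution inequality $L^1\ast L^2\to L^2$, applied $\o$-pointwise, yields $\|\lambda_n v_{n,1}(\cdot,\o)\|_{L^2(0,\infty)}\le\|\l f_1(\cdot,\cdot,\o),\phi_n\rd\|_{L^2(0,\infty)}$; taking expectation and summing in $n$ via Parseval's identity bounds $S_1$ by $\|f_1\|_{L^2(D\times(0,\infty)\times\Omega)}^2$.

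For $S_2$, the Ito isometry (Lemma~\ref{Ito}) followed by Fubini's theorem, exactly as in the proof of Lemma~\ref{lemma_regularity_h_source} but with the time interval extended to $(0,\infty)$, produces
\begin{equation*}
\int_0^\infty\E\bigl[\lambda_n^2 v_{n,2}^2(t,\o)\bigr]\,dt
\le\|h_n\|_{L^2(0,\infty)}^2\cdot\E\Bigl[\int_0^\infty\lambda_n^2\l f_2(\cdot,t,\o),\phi_n\rd^2\,dt\Bigr].
\end{equation*}
Estimate~\eqref{inequality_1} with $\delta=0$, whose integrability near $\tau=0$ hinges precisely on $\alpha>1/2$, supplies $\|h_n\|_{L^2(0,\infty)}^2\le\tilde C$ uniformly in $n$. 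Summing in $n$ and invoking $\sum_{n=1}^\infty\lambda_n^2\l f_2(\cdot,t,\o),\phi_n\rd^2\le C\|f_2(\cdot,t,\o)\|_{H^2(D)}^2$ (implicit under the Dirichlet compatibility built into the eigenfunction expansion, as in the $H^1$-bound used in Lemma~\ref{lemma_regularity_h_source}) yields $S_2\le C\|f_2\|_{L^2((0,\infty)\times\Omega;H^2(D))}^2$.

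The main obstacle is verifying the two uniform-in-$n$ integrability properties of the Mittag-Leffler kernels --- the $L^1$-identity for $\lambda_n h_n$ and the $L^2$-bound for $h_n$ --- on the unbounded interval $(0,\infty)$. Both steps depend essentially on $\alpha>1/2$: complete monotonicity (Lemma~\ref{mittag_monotone}) with the derivative formula (Lemma~\ref{mittag_derivative}) handles the $L^1$ computation, while integrability of $t^{2\alpha-2}$ at the origin combined with the decay from Lemma~\ref{mittag_bound} at infinity (packaged in estimate~\eqref{inequality_1}) handles the $L^2$ bound. Once these two kernel estimates are in place, the rest of the argument is a direct adaptation of the earlier lemmas in this section.
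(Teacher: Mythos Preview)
Your proposal is correct and follows essentially the same approach as the paper's proof: the same decomposition into $S_1+S_2$, the same $L^1$-identity $\|\lambda_n h_n\|_{L^1(0,\infty)}=1$ via Lemmas~\ref{mittag_monotone} and~\ref{mittag_derivative} for $S_1$, and the same uniform $L^2$-bound on $h_n$ via estimate~\eqref{inequality_1} for $S_2$. The paper's argument is slightly terser but otherwise identical.
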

\begin{proof}
Since $\alpha\in(1/2,1)$, we can set $\delta=0$ in \eqref{equality_4}. Similar to the proof of Lemma \ref{lemma_regularity_h_source}, we have
 \begin{equation*}
\begin{aligned}
 \|u\|^2_{L^2((0,\infty)\times \Omega;H^2(D))}
 &\le C \sum_{n=1}^\infty\|\lambda_nv_{n,1}\|_{L^2((0,\infty)\times \Omega)}^2
 +C\sum_{n=1}^\infty\|\lambda_nv_{n,2}\|_{L^2((0,\infty)\times \Omega)}^2
 \\&=: C(S_1+S_2).
 \end{aligned}
\end{equation*}
With Lemma \ref{mittag_monotone}, we have $E_{\alpha,\alpha} (-\lambda_n t^\alpha)\ge 0$, which combined with Lemma \ref{mittag_derivative} yields
\begin{equation*}
\begin{aligned}
 \|\lambda_nt^{\alpha-1}E_{\alpha,\alpha} (-\lambda_n t^\alpha)\|_{L^1(0,\infty)}
 &=\int_0^\infty \lambda_nt^{\alpha-1}E_{\alpha,\alpha} (-\lambda_n t^\alpha)\ dt\\
 &=[-E_{\alpha,1}(-\lambda_n t^\alpha)]\big|_0^\infty=1.
 \end{aligned}
\end{equation*}
Then applying Young's convolution inequality gives that  
\begin{equation*}
\begin{aligned}
 S_1&\le \int_\Omega \sum_{n=1}^\infty \|\lambda_nt^{\alpha-1}
 E_{\alpha,\alpha} (-\lambda_n t^\alpha)\|_{L^1(0,\infty)}^2 \ 
 \|\l f_1(\cdot,t,\omega),\phi_n\rd\|_{L^2(0,\infty)}^2\ dP(\o)\\
 &=\int_\Omega\sum_{n=1}^\infty \|\l f_1(\cdot,t,\omega),\phi_n\rd\|
 _{L^2(0,\infty)}^2 \ dP(\o)
 =\|f_1\|^2_{L^2(D\times(0,\infty)\times\Omega)}.
\end{aligned}
\end{equation*}

For the bound of $S_2$, we conclude from the Young's convolution inequality and \eqref{inequality_1} that 
\begin{equation*}
 \begin{aligned}
  S_2&=\sum_{n=1}^\infty \int_0^\infty \E[\lambda_n^2 v_{n,2}^2(t,\o)]  \ dt\\
  &\le \sum_{n=1}^\infty \Big[\int_0^\infty t^{2\alpha-2}E_{\alpha,\alpha}^2  (-\lambda_nt^\alpha)\ dt\Big]\ \E\Big[\int_0^\infty
  \lambda_n^2\l f_2(\cdot,t,\omega),\phi_n\rd^2\ dt\Big]\\
   &\le C \|f_2\|^2_{L^2((0,\infty)\times \Omega;H^2(D))}.
 \end{aligned}
\end{equation*}
The estimates of $S_1$ and $S_2$ give the desired result and complete the proof.
\end{proof}

\begin{remark}
Here we should point out that the lack of monotonic property of the Mittag-Leffler function $E_{\alpha,\beta}(-t)$ with $\alpha\in(1,2)$ causes several inconveniences in analyzing the regularity of the solution. For instance, for the $L^2$-regularity of the solution when $\alpha\in(1,2)$, $T$ cannot be infinity even if $f_1$ and $f_2$ have much stronger spatial  smoothness. 
\end{remark}

\subsection{Existence and uniqueness.}
Finally we consider the existence and uniqueness of solution \eqref{equality_4}. 
\begin{lemma}\label{lemma_existence_source}
 Under Definition \ref{def-solution}, the problem \eqref{equ-gov} admits
 a unique stochastic weak solution with the representation \eqref{equality_4}.
\end{lemma}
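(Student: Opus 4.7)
The plan is to follow the same two-step template established in the proofs of Lemmas \ref{lemma_existence_initial} and \ref{lem-wave-initial-existence}: first reduce the weak formulation \eqref{equ-var} to an ODE-type identity on each Fourier mode and invoke a uniqueness theorem from the theory of Volterra equations, then verify that the candidate \eqref{equality_4} solves that identity mode-by-mode.

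For uniqueness, suppose $u$ is a stochastic weak solution. Taking $\varphi=\phi_n$ in \eqref{equ-var} with $u_0=u_1=0$ and using $\A\phi_n=\lambda_n\phi_n$, the random function $v_n(t,\omega):=\l u(\cdot,t,\omega),\phi_n\rd$ must satisfy the linear Volterra equation
\begin{equation*}
v_n(t,\omega)+\lambda_n I_t^\alpha v_n(t,\omega)=I_t^{\alpha+\delta}\l f_1(\cdot,t,\omega),\phi_n\rd+\Gamma(\alpha+\delta)^{-1}\int_0^t(t-\tau)^{\alpha+\delta-1}\l f_2(\cdot,\tau,\omega),\phi_n\rd\,dB(\tau)
\end{equation*}
for a.e.\ $\omega$. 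As in the earlier lemmas, \cite[Theorem 3.25]{KilbasSrivastavaTrujillo:2006} gives a unique solution for each $\omega$, and the completeness of $\{\phi_n\}$ in $H^2(D)\cap H^1_0(D)$ upgrades this to uniqueness of the full weak solution.

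For existence, set $\tilde v_n=v_{n,1}+v_{n,2}$ and verify directly that $\tilde v_n$ solves the Volterra equation above. The key deterministic identity is that the Mittag--Leffler kernel $K_n(t):=t^{\alpha+\delta-1}E_{\alpha,\alpha+\delta}(-\lambda_n t^\alpha)$ satisfies
\begin{equation*}
K_n(t)+\lambda_n I_t^\alpha K_n(t)=\frac{t^{\alpha+\delta-1}}{\Gamma(\alpha+\delta)},
\end{equation*}
which follows either from the series definition of $E_{\alpha,\beta}$ together with the standard recursion $E_{\alpha,\alpha+\delta}(z)=\Gamma(\alpha+\delta)^{-1}+zE_{\alpha,2\alpha+\delta}(z)$, or from Lemma \ref{lem-ml-int}. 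Convolving this identity against $\l f_1(\cdot,\cdot,\omega),\phi_n\rd$ immediately gives $v_{n,1}+\lambda_n I_t^\alpha v_{n,1}=I_t^{\alpha+\delta}\l f_1(\cdot,t,\omega),\phi_n\rd$. For the stochastic mode $v_{n,2}$, the same identity must be pushed through the Ito integral: interpreting $v_{n,2}$ (after the change of variable $\tau\mapsto t-\tau$) as $\int_0^t K_n(t-\tau)\l f_2(\cdot,\tau,\omega),\phi_n\rd\,dB(\tau)$ and applying the stochastic Fubini theorem to $I_t^\alpha v_{n,2}$, one obtains $\int_0^t(I_t^\alpha K_n)(t-\tau)\l f_2(\cdot,\tau,\omega),\phi_n\rd\,dB(\tau)$, so that the identity for $K_n$ yields exactly the stochastic part of $I_t^\alpha F$.

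Summing $\tilde v_n(t,\omega)\phi_n(x)$ against $\l\varphi,\phi_n\rd$ for arbitrary $\varphi\in H^2(D)\cap H^1_0(D)$ reproduces \eqref{equ-var}, and the time continuity ensured by Lemma \ref{lemma_continuity} finishes the verification that \eqref{equality_4} is a stochastic weak solution. The main obstacle I foresee is the careful justification of the stochastic Fubini interchange: one must check that the integrand of the iterated integral is jointly measurable and satisfies an $L^2(\Omega\times(0,t)^2)$ bound so that the classical stochastic Fubini theorem applies; the mild singularity of $K_n$ at the origin together with the decay provided by Lemma \ref{mittag_bound} (and the constraint $\alpha+\delta>1/2$) is precisely what makes these bounds finite, mirroring the estimates \eqref{inequality_1} and \eqref{esti-int_ml} already established in the regularity lemmas.
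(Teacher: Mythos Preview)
Your proposal matches the paper's proof essentially step for step: reduce to the mode-wise Volterra identity \eqref{ODE_2}, invoke \cite[Theorem~3.25]{KilbasSrivastavaTrujillo:2006} for uniqueness, verify the candidate $\tilde v_n=v_{n,1}+v_{n,2}$ via the kernel identity for $t^{\alpha+\delta-1}E_{\alpha,\alpha+\delta}(-\lambda_n t^\alpha)$ combined with a (stochastic) Fubini interchange as in \eqref{e_3}--\eqref{e_4}, and close with the continuity from Lemma~\ref{lemma_continuity}.

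One caveat worth flagging: the substitution $\tau\mapsto t-\tau$ is \emph{not} a legitimate operation inside an It\^o integral, so your parenthetical ``change of variable'' cannot be used to pass from $\int_0^t K_n(\tau)\langle f_2(\cdot,t-\tau,\omega),\phi_n\rangle\,dB(\tau)$ to $\int_0^t K_n(t-\tau)\langle f_2(\cdot,\tau,\omega),\phi_n\rangle\,dB(\tau)$; these are different random variables. The paper's computation \eqref{e_4} in effect works with the second form---the one in which $\langle f_2(\cdot,s,\omega),\phi_n\rangle$ is paired with $dB(s)$ and the Mittag--Leffler kernel carries the shifted argument---so that after stochastic Fubini the $f_2$ factor sits outside the inner deterministic integral and the kernel identity applies cleanly. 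You should adopt that form from the outset rather than invoking a change of variable.
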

\begin{proof}
 Following the proof of Lemma \ref{lemma_existence_initial}, and setting $v_n(t,\o)=\l u(\cdot,t,\o),\phi_n\rd$ for $n\in\mathbb N^+$ and a.e. $\o\in\Omega$, the corresponding equation is deduced as 
\begin{equation}\label{ODE_2}
 v_n(t,\o)+ \lambda_n I_t^\alpha v_n(t,\o)=
 \langle I_t^\alpha F(\cdot,t,\o),\phi_n \rangle_{L^2(D)}.
\end{equation}
 We denote $\tilde v_n(t,\o)=v_{n,1}(t,\o)+v_{n,2}(t,\o)$. From the definition of the Riemann-Liouville fractional derivative and the Fubini lemma, it holds that 
 \begin{align*}
  &\lambda_n\I v_{n,1}(t,\o)\\
  &=\frac{\lambda_n}{\Gamma(\alpha)}\int_0^t (t-\tau)^{\alpha-1}  
   \Big[ \int_0^\tau \l f_1(\cdot,\tau-s,\omega),\phi_n\rd\ s^{\alpha+\delta-1}
   E_{\alpha,\alpha+\delta} (-\lambda_n s^\alpha)\ ds\Big]\ d\tau\\
   &=\frac{\lambda_n}{\Gamma(\alpha)}\int_0^t\l f_1(\cdot,s,\omega),\phi_n\rd 
    \Big[ \int_s^t (t-\tau)^{\alpha-1} (\tau-s)^{\alpha+\delta-1}
   E_{\alpha,\alpha+\delta} (-\lambda_n (\tau-s)^\alpha)\ d\tau\Big]\ ds.
 \end{align*}
 Moreover, by noting the equality $I_t^\alpha t^\gamma =\Gamma(\gamma+1)t^{\alpha+\gamma}/\Gamma(\alpha+\gamma+1)$ with $\alpha>0$ and $\gamma>-1$, from the series definition of the Mittag-Leffler function we see that
 \begin{equation*}
  \begin{aligned}
 \lambda_nI_t^\alpha \big( t^{\alpha+\delta-1} E_{\alpha,\alpha+\delta}(-\lambda_n t^\alpha)\big) 
 =& -t^{\alpha + \delta -1} \sum_{k=0}^\infty \frac{(-\lambda_n t^\alpha)^{k+1}}{\Gamma(\alpha k + 2\alpha +\delta)}\\
 =& t^{\alpha+\delta-1}
   \left[\Gamma(\alpha+\delta)^{-1}-E_{\alpha,\alpha+\delta}(-\lambda_n t^\alpha)\right],
 \end{aligned}
 \end{equation*}
 which gives 
\begin{equation}\label{e_3}
 \begin{aligned}
  &\lambda_n\I v_{n,1}(t,\o)\\
   &=\int_0^t\l f_1(\cdot,s,\omega),\phi_n\rd\ (t-s)^{\alpha+\delta-1}
   \left[\Gamma(\alpha+\delta)^{-1}-E_{\alpha,\alpha+\delta}(-\lambda_n(t-s)^\alpha)\right]\ ds\\
   &= -v_{n,1}(t,\omega)+\Gamma(\alpha+\delta)^{-1}\int_0^t(t-s)^{\alpha+\delta-1}\l f_1(\cdot,s,\omega),\phi_n\rd\ ds.
 \end{aligned}
\end{equation}
Analogously, we can see that
\begin{equation}\label{e_4}
   \begin{aligned}
   &\lambda_n\I v_{n,2}(t,\o)\\
   &=\frac{1}{\Gamma(\alpha)}\int_0^t (t-\tau)^{\alpha-1}  
   \int_0^\tau \l f_2(\cdot,\tau-s,\omega),\phi_n\rd\ s^{\alpha+\delta-1}
   E_{\alpha,\alpha+\delta} (-\lambda_n s^\alpha)~dB(s)\ d\tau\\
   &=\frac{1}{\Gamma(\alpha)}\int_0^t \l f_2(\cdot,s,\omega),\phi_n\rd  
   \int_s^t (t-\tau)^{\alpha-1} (\tau-s)^{\alpha+\delta-1}
   E_{\alpha,\alpha+\delta} (-\lambda_n (\tau-s)^\alpha)~d\tau\ dB(s)\\
   &=\int_0^t \l f_2(\cdot,s,\omega),\phi_n\rd\ (t-s)^{\alpha+\delta-1}
   [\Gamma(\alpha+\delta)^{-1}-E_{\alpha,\alpha+\delta}(-\lambda_n(t-s)^\alpha)]\ dB(s)\\
   &=-v_{n,2}(t,\omega) + \Gamma(\alpha+\delta)^{-1}\int_0^t(t-s)^{\alpha+\delta-1}\l f_2(\cdot,s,\omega),\phi_n\rd\ dB(s).
  \end{aligned}
  \end{equation}
These imply that $\tilde v_n$ satisfies equation \eqref{ODE_2}, which together with the uniqueness in \cite[Theorem 3.25]{KilbasSrivastavaTrujillo:2006} yields $v_n=\tilde v_n$. With the completeness of $\{\phi_n\}_{n=1}^\infty$ in $H^2(D)\cap H_0^1(D)$, 
the uniqueness is obtained.

Again recalling $\mathcal A\phi_n=\lambda_n\phi_n$ and $\{\phi_n\}_{n=1}^\infty$ is an orthonormal basis of $L^2(D)$, we sum up equations \eqref{e_3} and \eqref{e_4} for each $n\in\mathbb N^+$ with multiplying $\l\varphi,\phi_n \rd$. Then we have that the series  $u=\sum_{n=1}^\infty (v_{n,1}+v_{n,2}) \phi_n$ satisfies equation \eqref{equ-var}. With the continuity ensured by Lemma \ref{lemma_continuity}, it holds that $u$ given by \eqref{equality_4} is the stochastic weak solution of the problem \eqref{equ-gov} under Definition \ref{def-solution}. The proof is complete. 
\end{proof}

Now we can complete the proof of Theorem \ref{thm-source}. 
\begin{proof}[Proof of Theorem \ref{thm-source}]
 The existence and uniqueness of the stochastic weak solution follow from Lemma \ref{lemma_existence_source}, and the regularities are given by Lemmas \ref{lemma_continuity}, 
 \ref{lemma_regularity_c_source}, \ref{lemma_regularity_l_source}, \ref{lemma_regularity_h_source} and Corollaries \ref{cor-wave-source-conti}, \ref{infty}, \ref{infty_h}.  
\end{proof}

\section{Application in inverse problems.}\label{sec-isp}

In this section, $\Gamma$ is set to be an open subset of the boundary $\partial D$. Also we assume the source term $F$ in the problem \eqref{equ-gov} is of the form 
\begin{equation}\label{eq-source}
F(x,t)= I_t^\delta \Big(f_1(x) g_1(t) + f_2(x) g_2(t) \frac{dB(t)}{dt} \Big).
\end{equation}
Here we recall that $\delta>0$ satisfies \eqref{condition_delta} and assume that $g_i\not\equiv0,\ i=1,2$ are known $C^1$-functions. The interested inverse problem is to determine $f_1(x)$ and $f_2(x)$ in \eqref{eq-source} from the moments of the partial boundary measurements  
\begin{equation}
\label{ob-u}
\partial_{\nu_{\mathcal A}}u(x,t,\omega):=\sum_{i,j=1}^d a_{ij}(x)\nu_j\frac{\partial u}{\partial x_i}(x,t,\omega) ,\quad (x,t,\omega) \in \Gamma\times (0,T)\times \Omega,
\end{equation}
where $(\nu_1, \nu_2, \cdots, \nu_d)$ is the unit outward normal vector at the boundary $\partial D$. In this section we will establish the uniqueness of this inverse source problem.

\subsection{Literature.}
In the absence of random source terms, we can find some positive answers for the uniqueness topic. For example, Zhang and Xu \cite{Ying2011Inverse} and Li, Liu and Yamamoto \cite{LiLiuYa2022} discussed the uniqueness of the inverse source problem for the $1$D time-fractional diffusion equation; Jiang, Li, Liu and Yamamoto \cite{Jiang2017Weak} established the results parallel to \cite{Ying2011Inverse} for the multi-dimensional case by constructing a weak type unique continuation, and proposed the iteration algorithm for the numerical treatment. For some other works about the fractional inverse problems, we refer to  \cite{Cheng2009Uniqueness,Kirane2011Determination,Li2015Uniqueness,Li2020Unique,Liu2016Strong,Sakamoto2013Inverse,SunLiu2020,WSL16,Zhidong2016An} and the references therein. 

For the case of recovering the random source, we collect several representative works. 
In \cite{feng2020inverse} and \cite{nie2021inverse}, the recovery of spatially varying source terms from final overdetermination data was investigated. By the properties and the Ito isometry of Brownian motions, the authors showed the uniqueness and stability of the inverse random source problem. The articles  \cite{fu2021application,gong2021numerical,liu2019reconstruction,niu2020inverse} provided the well-posedness of the direct problem and established the stability of the inverse problem for  determining the time-dependent sources. The main idea is to use the Duhamel principle and strong maximum principle. Moreover, \cite{gong2021numerical,liu2019reconstruction,niu2020inverse}  developed several effective reconstruction algorithms.

Unlike most of the existing works, which solve the stochastic inverse problems by the final overdetermination data \cite{feng2020inverse, nie2021inverse} or the interior point data \cite{fu2021application, gong2021numerical, liu2019reconstruction, niu2020inverse}, this work 
uses the moments of the partial boundary measurements. As a result, we need to overcome some novel difficulties in the proofs. Firstly, the random terms makes our discussion more difficult. Indeed, the solution of the equation can only achieve weaker temporal regularity estimates than the deterministic case. Then it is challenging to deduce an integral identity relation between  the changes of data and unknowns (see Lemma \ref{lem-duh}). On the other hand, in view of the setting of the source \eqref{eq-source}, there are two unknowns $f_i$, $i=1,2$ that need to be recovered, which makes the usual analytical techniques in, e.g., Jiang et al. \cite{Jiang2017Weak} do not work directly. Specifically, it is difficult to establish the Duhamel principle analogue to that in \cite{Jiang2017Weak}. Using the expectation and variance of the observed data, we decompose the original inverse source problem into the unique continuation problem of two corresponding homogeneous equations and then prove the uniqueness of the inverse source problem. 

\subsection{Uniqueness theorem.}
The following is the uniqueness theorem of this inverse random source problem. 
\begin{theorem}\label{thm-isp}
We let $f_1\in H_0^1(D)$, $f_2\in L^2(D)$ and $g_i\in C^1[0, T]$ satisfy $g_i(0) \ne 0$ for $i = 1,2$. Let $u\in L^2((0,T)\times\Omega; H^2(D)\cap H_0^1(D)) \cap C([0,T];L^2(D\times\Omega))$  be the stochastic weak solution to the problem \eqref{equ-gov} with $u_0=0$, $u_1=0$ if $\alpha\in(1,2)$. Then 
$$
\mathbb E[\partial_{\nu_{\mathcal A}} u(x,t,\o)] = \mathrm{Var} [\partial_{\nu_{\mathcal A}} u(x,t,\o) ]=0,\ (x,t)\in \Gamma \times (0, T), 
$$
implies $f_1 = f_2 = 0$ in $D$. 
\end{theorem}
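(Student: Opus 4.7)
The plan is to split this stochastic inverse problem into two deterministic problems, one for $f_1$ obtained by taking expectations, and one for $f_2$ obtained by taking variances and invoking Ito's isometry. Each sub-problem is then reduced to a deterministic unique continuation argument in the style of Jiang--Li--Liu--Yamamoto \cite{Jiang2017Weak}.

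\textbf{Step 1 (Expectation isolates $f_1$).} Starting from the representation \eqref{equality_4} specialized to $f_1(x,t,\omega)=f_1(x)g_1(t)$, $f_2(x,t,\omega)=f_2(x)g_2(t)$, I take expectation term by term. The Ito integral term has zero mean, so $v(x,t):=\E[u(x,t,\o)]$ is the deterministic weak solution of \eqref{equ-gov} with source $I_t^\delta(f_1(x)g_1(t))$ and zero initial data, and
\begin{equation*}
v(x,t)=\sum_{n=1}^\infty \l f_1,\phi_n\rd\,\phi_n(x)\int_0^t g_1(t-\tau)\,\tau^{\alpha+\delta-1}E_{\alpha,\alpha+\delta}(-\lambda_n\tau^\alpha)\,d\tau.
\end{equation*}
The $L^2((0,T)\times\Omega;H^2(D)\cap H_0^1(D))$ regularity of $u$ lets me commute $\E$ with $\partial_{\Nua}$, so $\partial_{\Nua} v=0$ on $\Gamma\times(0,T)$. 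Since each term is real-analytic in $t$ on $(0,\infty)$, this identity extends to all $t>0$.

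\textbf{Step 2 (Unique continuation for $f_1$).} Grouping eigenfunctions by distinct eigenvalues $\mu_k$ with spectral projections $P_k$, and using $\mathcal{L}[t^{\alpha+\delta-1}E_{\alpha,\alpha+\delta}(-\lambda_n t^\alpha)](s)=s^{-\delta}/(s^\alpha+\lambda_n)$, the Laplace transform in $t$ yields
\begin{equation*}
0=\hat g_1(s)\,s^{-\delta}\sum_{k=1}^\infty \frac{\partial_{\Nua}(P_k f_1)(x)}{s^\alpha+\mu_k},\qquad x\in\Gamma,
\end{equation*}
for $\re s$ sufficiently large. Since $g_1\not\equiv 0$ is $C^1$, $\hat g_1\not\equiv 0$, and analytic continuation in $s^\alpha$ together with the pole structure forces $\partial_{\Nua}(P_k f_1)=0$ on $\Gamma$ for every $k$. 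Because $P_k f_1$ is an eigenfunction of $\A$ with zero Dirichlet data on $\partial D$ and zero conormal derivative on $\Gamma$, extending it by zero across $\Gamma$ gives an $H^2$ solution of the elliptic equation $(\A-\mu_k)w=0$ that vanishes on an open set; unique continuation for elliptic operators with smooth coefficients then yields $P_k f_1\equiv 0$ in $D$, and summing over $k$ gives $f_1=0$.

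\textbf{Step 3 (Variance isolates $f_2$).} With $f_1=0$, $\E[u]=0$ and the solution reduces to the Ito-integral term of \eqref{equality_4}. Hence $\mathrm{Var}[\partial_{\Nua} u]=\E[(\partial_{\Nua} u)^2]$, and applying Lemma \ref{Ito} (with justification of interchanging $\partial_{\Nua}$ and the series via the assumed $L^2(\Omega;H^2(D))$ regularity) produces
\begin{equation*}
0=\mathrm{Var}[\partial_{\Nua} u(x,t,\o)]=\int_0^t \bigl|g_2(t-\tau)\,\Phi(x,\tau)\bigr|^2\,d\tau,\qquad (x,t)\in\Gamma\times(0,T),
\end{equation*}
where $\Phi(x,\tau):=\sum_{n=1}^\infty \l f_2,\phi_n\rd\,\partial_{\Nua}\phi_n(x)\,\tau^{\alpha+\delta-1}E_{\alpha,\alpha+\delta}(-\lambda_n\tau^\alpha)$. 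Non-negativity forces the integrand to vanish a.e., and since $g_2\in C^1[0,T]$ with $g_2(0)\ne 0$ there is $\eta>0$ on which $g_2$ stays nonzero; sweeping $t$ over $(0,T)$ then gives $\Phi(x,\tau)=0$ for a.e. $\tau\in(0,T)$, $x\in\Gamma$. Analyticity in $\tau>0$ extends this to all $\tau>0$, and the Laplace-transform/spectral-projection/unique continuation argument of Step 2 applied to $\Phi$ yields $f_2=0$.

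\textbf{Main obstacle.} The delicate part is Step 3: rigorously commuting the boundary trace $\partial_{\Nua}$ with both the eigenfunction series and the stochastic integral so that the Ito isometry can be applied \emph{pointwise} in $(x,t)\in\Gamma\times(0,T)$, and then extracting the deterministic pointwise identity for $\Phi$ from the vanishing variance. The requirement $g_2(0)\ne 0$ is exactly what allows the $g_2(t-\tau)$ factor to be divided out. Once $\Phi=0$ on $\Gamma\times(0,T)$ is established, the remainder is the same Laplace/eigenvalue-grouping/unique-continuation scheme already used for $f_1$.
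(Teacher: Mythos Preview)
Your overall architecture---expectation to isolate $f_1$, variance plus Ito isometry to isolate $f_2$, then reduce each to a deterministic unique continuation---is exactly the paper's. The gap is in Step~1: you assert that $\partial_{\Nua}v(x,\cdot)$ is real-analytic on $(0,\infty)$ so that its vanishing on $\Gamma\times(0,T)$ extends to all $t>0$, which you need before taking the Laplace transform in Step~2. But $v$ is the convolution in $t$ of the merely $C^1$ function $g_1$ against a kernel, and such a convolution inherits only the $C^1$ regularity of $g_1$; it is not analytic, so the extension fails. A tell-tale sign is that your argument for $f_1$ never uses the hypothesis $g_1(0)\ne0$, only $g_1\not\equiv0$. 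The paper handles this by first stripping the convolution: differentiating $\int_0^t g_1(t-\tau)H(x,\tau)\,d\tau=0$ in $t$ gives $g_1(0)H(x,t)+\int_0^t g_1'(t-\tau)H(x,\tau)\,d\tau=0$, and since $g_1(0)\ne0$ Gronwall's inequality forces $H=0$ on $\Gamma\times(0,T)$. You can insert the same step; with $g_1$ removed, the remaining kernel \emph{is} analytic in $t>0$ and your Laplace/pole argument then goes through. Your Step~3 is fine as written, because there the non-negativity of the integrand kills the product $g_2(t-\tau)\Phi(x,\tau)$ directly, and the analytic object $\Phi$ (not a convolution) is what gets extended.

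After this fix the two proofs differ only in how they finish. The paper packages $u$ via a Duhamel identity in terms of solutions $v_i$ of the homogeneous time-fractional problem with initial data $f_i$, strips $g_i$ by the Gronwall step above, and then invokes a ready-made unique continuation theorem for the fractional diffusion-wave equation on the finite interval $(0,T)$. Your Laplace-transform/eigenvalue-separation/elliptic-UC route is essentially the proof \emph{inside} that black box, so your approach is more self-contained at the cost of needing the extension to $t\in(0,\infty)$.
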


Before giving the proof of the above theorem, we first show several useful lemmas which are related to Riemann-Liouville integral and the representation formula of the solution $u$. The first one is about the convolution formula for the Riemann-Liouville fractional integral.
\begin{lemma}
\label{lem-convo-RL}
Let $\alpha>0$ and $g,h\in L^2(0,T)$, then 
\begin{align*}
 I_t^\alpha\Big(\int_0^t g(\tau) h(t-\tau)\ d\tau \Big)
&=\int_0^t  g(\tau) (I_t^\alpha h)(t-\tau)\ d\tau,\\
I_t^\alpha\Big(\int_0^t g(\tau) h(t-\tau)\ dB(\tau) \Big)
&=\int_0^t  g(\tau) (I_t^\alpha h)(t-\tau)\ dB(\tau),\quad 0<t<T.
\end{align*}
\end{lemma}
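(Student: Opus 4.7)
The plan is to reduce both identities to an application of Fubini's theorem (classical for the first, stochastic for the second), combined with an elementary change of variable. Throughout I would work with the definition
$$I_t^\alpha \varphi(t)=\Gamma(\alpha)^{-1}\int_0^t (t-s)^{\alpha-1}\varphi(s)\,ds,$$
so the left-hand side of the first identity becomes the double Lebesgue integral
$$\Gamma(\alpha)^{-1}\int_0^t (t-s)^{\alpha-1}\!\!\int_0^s g(\tau)h(s-\tau)\,d\tau\,ds.$$
Since the integrand is measurable on the triangle $\{0\le\tau\le s\le t\}$ and $g,h\in L^2(0,T)$ give an $L^1$ estimate via Cauchy–Schwarz and the integrability of $(t-s)^{\alpha-1}$ near $s=t$, Fubini applies and I may swap the order of integration, obtaining
$$\Gamma(\alpha)^{-1}\int_0^t g(\tau)\!\!\int_\tau^t (t-s)^{\alpha-1}h(s-\tau)\,ds\,d\tau.$$
The substitution $u=s-\tau$ in the inner integral converts it to $(I_t^\alpha h)(t-\tau)$, yielding the first identity.

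For the second identity I would carry out the same calculation, only replacing the outer convolution integral by an Ito integral. Writing
$$I_t^\alpha\Bigl(\int_0^\cdot g(\tau)h(\cdot-\tau)\,dB(\tau)\Bigr)(t)
=\Gamma(\alpha)^{-1}\int_0^t (t-s)^{\alpha-1}\!\!\int_0^s g(\tau)h(s-\tau)\,dB(\tau)\,ds,$$
I would then apply the stochastic Fubini theorem to swap the deterministic $ds$-integral with the stochastic $dB(\tau)$-integral. After the swap and the same substitution $u=s-\tau$, the right-hand side emerges. To invoke stochastic Fubini I need to verify a joint integrability condition such as
$$\int_0^t\!\!\int_0^t \mathbf 1_{\{\tau\le s\}}(t-s)^{2\alpha-2}g^2(\tau)h^2(s-\tau)\,d\tau\,ds<\infty,$$
which follows from $g,h\in L^2(0,T)$ together with the local integrability of $(t-s)^{\alpha-1}$; a short bound via the change of variables $u=s-\tau$ and Tonelli reduces the estimate to the $L^2$-norms of $g$ and $h$ and the $L^1$-norm of the kernel $u\mapsto u^{\alpha-1}$ on $(0,T)$.

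The routine parts are the substitution and the Fubini bookkeeping. The main obstacle is the stochastic Fubini step: one must make sure that the integrand is jointly measurable and adapted in $\tau$ for each fixed $s$, and that the deterministic outer integral produces an $L^2(\Omega)$-valued Bochner integral. Since $g,h$ are deterministic and $B$ has the standard filtration, adaptedness is immediate, and the $L^2$-bound above provides the moment condition required to apply the classical stochastic Fubini theorem (see, e.g., \cite{Bernt2003stochastic}). Once this is in hand, the computation is the same as in the deterministic case and both identities follow.
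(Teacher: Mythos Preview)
Your approach is exactly the paper's: expand the Riemann--Liouville integral, interchange the order of integration via (stochastic) Fubini, then substitute $u=s-\tau$ in the inner integral to recognize $(I_t^\alpha h)(t-\tau)$. The only point worth tightening is the displayed integrability hypothesis with $(t-s)^{2\alpha-2}$, which can fail when $\alpha\le\tfrac12$; your follow-up remark reducing matters to the $L^1$-norm of $u\mapsto u^{\alpha-1}$ is the right fix, corresponding to the version of stochastic Fubini that only asks for $\int_0^t(t-s)^{\alpha-1}\bigl(\int_0^s g^2(\tau)h^2(s-\tau)\,d\tau\bigr)^{1/2}\,ds<\infty$ (the paper itself simply cites ``the Fubini lemma'' without spelling this out).
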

\begin{proof}
We only give the proof of the second equality since the first one can be derived analogously.  From the definition of the Riemann-Liouville fractional integral $I_t^\alpha$, it follows that
\begin{align*}
I_t^\alpha \Big(\int_0^t g(\tau) h(t-\tau)\ dB(\tau) \Big)
=&\frac1{\Gamma(\alpha)} \int_0^t (t-\eta)^{\alpha-1} \Big[ \int_0^\eta g(\tau) h(\eta-\tau) \ dB(\tau) \Big]\ d\eta\\
=&\frac1{\Gamma(\alpha)} \int_0^t g(\tau) \Big[ \int_\tau^t (t-\eta)^{\alpha - 1} h(\eta-\tau) \ d\eta  \Big]\ dB(\tau).
\end{align*}
Here in the last equality the Fubini lemma is used. After the change of the variable, we find
\begin{align*}
I_t^\alpha \Big(\int_0^t g(\tau) h(t-\tau)\ dB(\tau) \Big)
=& \frac1{\Gamma(\alpha)} \int_0^t g(t-\tau)  \Big[ \int_0^\tau (\tau-\eta)^{\alpha -1} h(\eta) \ d\eta \Big]\ dB(\tau)\\
=& \int_0^t g(t-\tau) I_t^\alpha h(\tau)\ dB(\tau).
\end{align*}
We complete the proof of the lemma.
\end{proof}
Recalling the representation formula \eqref{equality_4} of the solution, we can derive the following Duhamel principle, which allows us to represent the solution of the nonhomogeneous equation via the corresponding homogeneous one.
\begin{lemma}\label{lem-duh}
Under the same assumptions in Theorem \ref{thm-isp}, we set $\delta>0$ being a constant such that \eqref{condition_delta} and let $u$ be the solution of model \eqref{equ-gov} with $u_0=0$, $u_1=0$ if $\alpha\in(1,2)$. Then $u(x,t,\o)$ can be represented as 
\begin{equation}
\label{eq-duh}
\begin{aligned}
u(x,t,\o)
= \int_0^t g_1(t-\tau) I_t^{\alpha+\delta-1}v_1(x,\tau)\ d\tau + \int_0^t g_2(t-\tau) I_t^{ \alpha+\delta-1}v_2(x,\tau)\ dB(\tau)
  \end{aligned}
 \end{equation}
 in the case of $\alpha+\delta - 1\ge 0$. If $\alpha+\delta-1<0$, we have 
 \begin{equation}
\label{eq-duh2}
\begin{aligned}
I_t^{1-\alpha-\delta} u(x,t,\o)
= \int_0^t g_1(t-\tau) v_1(x,\tau)\ d\tau + \int_0^t g_2(t-\tau) v_2(x,\tau)\ dB(\tau),
  \end{aligned}
 \end{equation}
 where $v_i$, $i=1,2$ are the solutions of the following homogeneous initial-boundary value problems
\begin{equation}\label{eq-homo}
\begin{cases}
\begin{aligned}
\partial_t^\alpha v_i(x,t) - \Delta v_i(x,t) &= 0 &&\text{ in } D\times(0,T),\\
v_i(x,t) &= f_i(x) &&\text{ in } D\times\{0\},\\
\partial_t v_i(x,t)&=0 && \text{ in } D\times\{0\},\mbox{ if } \alpha\in(1,2),\\
v_i(x,t) &= 0 &&\text{ on } \partial D\times (0, T).
\end{aligned}
\end{cases}
\end{equation}
\end{lemma}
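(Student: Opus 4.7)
The plan is to derive both identities via the eigenfunction expansion \eqref{equality_4} of $u$ together with a scaling identity for Riemann--Liouville integrals of Mittag-Leffler functions. Since $f_i(x,t,\omega)=f_i(x)g_i(t)$ gives $\l f_i(\cdot,t-\tau,\omega),\phi_n\rd=g_i(t-\tau)\l f_i,\phi_n\rd$, \eqref{equality_4} specializes to
\begin{align*}
u(x,t,\omega)=&\sum_{n=1}^\infty\l f_1,\phi_n\rd\Bigl(\int_0^t g_1(t-\tau)\tau^{\alpha+\delta-1}E_{\alpha,\alpha+\delta}(-\lambda_n\tau^\alpha)\,d\tau\Bigr)\phi_n(x)\\
&+\sum_{n=1}^\infty\l f_2,\phi_n\rd\Bigl(\int_0^t g_2(t-\tau)\tau^{\alpha+\delta-1}E_{\alpha,\alpha+\delta}(-\lambda_n\tau^\alpha)\,dB(\tau)\Bigr)\phi_n(x),
\end{align*}
while Theorems \ref{thm-initial} and \ref{thm-wave-initial} give $v_i(x,\tau)=\sum_{n=1}^\infty\l f_i,\phi_n\rd E_{\alpha,1}(-\lambda_n\tau^\alpha)\phi_n(x)$.

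Next I would establish, termwise from the series definition of $E_{\alpha,\beta}$ and the elementary identity $I_t^\gamma t^s=\Gamma(s+1)t^{s+\gamma}/\Gamma(s+\gamma+1)$, the scaling formula
$$I_t^{\gamma}\bigl(t^{\beta-1}E_{\alpha,\beta}(-\lambda t^\alpha)\bigr)=t^{\beta+\gamma-1}E_{\alpha,\beta+\gamma}(-\lambda t^\alpha),\qquad \gamma>0,\ \beta>0.$$
In the case $\alpha+\delta-1\ge 0$, setting $(\beta,\gamma)=(1,\alpha+\delta-1)$ yields $\tau^{\alpha+\delta-1}E_{\alpha,\alpha+\delta}(-\lambda_n\tau^\alpha)=I_\tau^{\alpha+\delta-1}E_{\alpha,1}(-\lambda_n\tau^\alpha)$. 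Substituting this into the expression for $u$ and interchanging the sum over $n$ with the deterministic integral (respectively the Ito integral) produces \eqref{eq-duh}. In the case $\alpha+\delta-1<0$, I would apply $I_t^{1-\alpha-\delta}$ to both sides of the formula for $u$, invoke Lemma \ref{lem-convo-RL} to pull the fractional integral inside the two convolutions, then use the identity with $(\beta,\gamma)=(\alpha+\delta,1-\alpha-\delta)$ to collapse $I_t^{1-\alpha-\delta}\bigl(t^{\alpha+\delta-1}E_{\alpha,\alpha+\delta}(-\lambda_n t^\alpha)\bigr)$ to $E_{\alpha,1}(-\lambda_n t^\alpha)$, which restores the eigenfunction expansion of $v_i$ and yields \eqref{eq-duh2}.

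The main obstacle is justifying the interchange of the infinite sum over $n$ with the Ito integral. I would truncate the series at level $N$, note that the finite sums satisfy the desired identity trivially by linearity, and then pass to the limit $N\to\infty$ in $L^2(\Omega)$ via the Ito isometry of Lemma \ref{Ito}: the relevant second moment is bounded by $C\sum_n\l f_2,\phi_n\rd^2\int_0^t|g_2(t-\tau)|^2\tau^{2\alpha+2\delta-2}E^2_{\alpha,\alpha+\delta}(-\lambda_n\tau^\alpha)\,d\tau$, which is finite and summable by the assumption $f_2\in L^2(D)$, the continuity of $g_2$, the uniform bound on $E_{\alpha,\alpha+\delta}$ from Lemma \ref{mittag_bound}, and the integrability $\tau^{2\alpha+2\delta-2}\in L^1(0,T)$ guaranteed by \eqref{condition_delta}. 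The analogous exchange for the deterministic integral is handled by Fubini, and the hypothesis $u\in L^2((0,T)\times\Omega;H^2(D)\cap H_0^1(D))$ in Theorem \ref{thm-isp} ensures $I_t^{1-\alpha-\delta}u$ is well defined in the second case.
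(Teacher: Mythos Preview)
Your proposal is correct and follows essentially the same route as the paper: specialize the eigenfunction expansion \eqref{equality_4}, invoke the identity $I_t^{\gamma}\bigl(t^{\beta-1}E_{\alpha,\beta}(-\lambda t^\alpha)\bigr)=t^{\beta+\gamma-1}E_{\alpha,\beta+\gamma}(-\lambda t^\alpha)$ (which the paper cites as \cite[(1.82)]{Podlubny1999fractional}), and in the second case apply $I_t^{1-\alpha-\delta}$ together with Lemma~\ref{lem-convo-RL}. Your truncation-plus-It\^o-isometry argument for interchanging the sum and the stochastic integral is in fact more explicit than the paper's one-line appeal to dominated convergence.
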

\begin{proof}
Firstly, according to Theorem \ref{thm-wave-initial}, it is not difficult to check that the solutions $v_i$, $i=1,2$ to the homogeneous initial-boundary value problem \eqref{eq-homo} admit the following Fourier expansion
\begin{equation}\label{eq-v-f}
\begin{aligned}
v_i(x,t)
=&\sum_{n=1}^\infty \l f_i,\phi_n \rd E_{\alpha,1} (-\lambda_n t^\alpha) \phi_n(x),\quad i=1,2.
  \end{aligned}
 \end{equation}
On the other hand, in view of Theorem \ref{thm-source} with $F$ being of the form \eqref{eq-source}, we see that $u(x,t,\o)$ can be further rephrased  as follows
 \begin{equation}\label{eq-u-f}
 \begin{aligned}
  u(x,t,\o)=&\sum_{n=1}^\infty  \Big[\l f_1,\phi_n \rd \int_0^t  g_1(t-\tau) \tau^{\alpha+\delta-1}E_{\alpha,\alpha+\delta} (-\lambda_n \tau^\alpha)\ d\tau \\
  &\qquad+\l f_2,\phi_n\rd \int_0^t g_2(t-\tau) \tau^{\alpha+\delta-1}E_{\alpha,\alpha+\delta}
  (-\lambda_n \tau^\alpha)\ dB(\tau)\Big]\ \phi_n(x).
  \end{aligned}
 \end{equation}
Now if $\alpha+\delta-1\ge0$, in view of \cite[$(1.82)$]{Podlubny1999fractional}, we see that
  $$
 I_t^{\alpha+\delta-1} E_{\alpha,1} (-\lambda_n t^\alpha) = t^{\alpha+\delta-1}E_{\alpha,\alpha+\delta} (-\lambda_n t^\alpha).
 $$
Substituting the above equality into the representation formula \eqref{eq-u-f}, we arrive at
 \begin{align*}
  u(x,t,\o)=&\sum_{n=1}^\infty \Big[\l f_1,\phi_n \rd  \int_0^t  g_1(t-\tau)  I_\tau^{\alpha+\delta-1} E_{\alpha,1} (-\lambda_n \tau^\alpha) \ d\tau \\
  &\qquad+\l f_2,\phi_n\rd \int_0^t g_2(t-\tau)   I_\tau^{\alpha+\delta-1} E_{\alpha,1} (-\lambda_n \tau^\alpha)  \ dB(\tau)\Big]\ \phi_n(x).
  \end{align*}
By the dominated convergence theorem, we can exchange the infinite summation and the integral in the right hand side of the above equality to derive that
 \begin{equation*}
 \begin{aligned}
  u(x,t,\o)=& \int_0^t g_1(t-\tau) I_\tau^{\alpha+\delta-1} \Big(\sum_{n=1}^\infty \l f_1,\phi_n \rd   E_{\alpha,1} (-\lambda_n \tau^\alpha) \phi_n(x) \Big)\ d\tau \\
  &+ \int_0^t g_2(t-\tau) I_\tau^{\alpha+\delta-1} \Big(\sum_{n=1}^\infty \l f_2,\phi_n \rd   E_{\alpha,1} (-\lambda_n \tau^\alpha) \phi_n(x) \Big)\ dB(\tau).
  \end{aligned}
 \end{equation*}
Finally, we can see that the equality \eqref{eq-duh} is valid by noting the series representation formula \eqref{eq-v-f}. It remains to show \eqref{eq-duh2} in the case of $\alpha+\delta-1<0$. For this, we multiply the Riemann-Liouville fractional integral $I_t^{1-\alpha-\delta}$ on both sides of the representation formula \eqref{eq-u-f}. Then by Lemma \ref{lem-convo-RL}, we find
\begin{align*}
 & I_t^{1-\alpha-\delta} u(x,t,\o) \\
 &=\sum_{n=1}^\infty \Big[\l f_1,\phi_n \rd  \int_0^t  g_1(t-\tau)  I_\tau^{1-\alpha-\delta} \big( \tau^{\alpha+\delta-1}E_{\alpha,\alpha+\delta} (-\lambda_n \tau^\alpha)\big)\ d\tau \\
  &\quad\qquad\ +\l f_2,\phi_n\rd \int_0^t g_2(t-\tau)  I_\tau^{1-\alpha-\delta} \big( \tau^{\alpha+\delta-1}E_{\alpha,\alpha+\delta} (-\lambda_n \tau^\alpha)\big)\  dB(\tau)\Big]\ \phi_n(x).
  \end{align*}
Moreover, again by virtue of \cite[$(1.82)$]{Podlubny1999fractional}, we see that 
  $$
 I_t^{1-\alpha-\delta} \Big(t^{\alpha+\delta-1}E_{\alpha,\alpha+\delta} (-\lambda_n t^\alpha) \Big)= E_{\alpha,1} (-\lambda_n t^\alpha), 
 $$
 from which $I_t^{1-\alpha-\delta}u(x,t,\o)$ can be further represented by
\begin{align*}
 I_t^{1-\alpha-\delta} u(x,t,\o)=&\sum_{n=1}^\infty\Big[ \l f_1,\phi_n \rd  \int_0^t  g_1(t-\tau)  E_{\alpha,1} (-\lambda_n \tau^\alpha)\ d\tau \\
  &\qquad+\l f_2,\phi_n\rd \int_0^t g_2(t-\tau)  E_{\alpha,1} (-\lambda_n \tau^\alpha)\  dB(\tau)\Big]\ \phi_n(x).
  \end{align*}
Again by exchanging the orders of the integral and summation, and noting the representation formula \eqref{eq-v-f}, we can obtain
 \begin{align*}
 I_t^{1-\alpha-\delta} u(x,t,\o)=&\int_0^t  g_1(t-\tau)  v_1(x,\tau)\ d\tau +\int_0^t g_2(t-\tau)  v_2(x,\tau)\ dB(\tau).
  \end{align*}
  We finish the proof of the lemma.
\end{proof}

Now we are ready to give the proof of the uniqueness theorem. 
\begin{proof}[Proof of Theorem \ref{thm-isp}]
Step 1. determine $f_1(x)$. Firstly, in the case of $\alpha+\delta-1\ge0$, from the integral identity in Lemma \ref{lem-duh}, and noting that the assumption $\mathbb E[\partial_{\nu_{\mathcal A}} u(x,t,\o)]=0$ on $\Gamma\times(0,T)$, we can assert that 
\begin{equation}\label{eq-v1}
\int_0^t g_1(\tau) I_t^{\alpha+\delta-1}\partial_{\Nua} v_1(x,t-\tau)\ d\tau =0,\quad (x,t)\in\Gamma\times(0,T).
\end{equation}
In fact, from Lemma \ref{lem-duh} with $\alpha+\delta-1\ge0$, we see that 
\begin{align*}
0=& \partial_{\Nua}u(x,t,\omega) \\
=& \int_0^t g_1( \tau) I_t^{\alpha+\delta-1} \partial_{\Nua} v_1(x,t - \tau) \ d\tau
+\int_0^t g_2(\tau) I_t^{\alpha+\delta-1} \partial_{\Nua} v_2(x, t - \tau) \ dB( \tau)
\end{align*}
for $(x,t,\omega) \in \Gamma\times(0, T)\times \Omega$.
Taking expectation $\mathbb E[\cdot]$ on both sides of the above equation yields
\begin{align*}
0=&\mathbb E \Big[ \int_0^t g_1( \tau) I_t^{\alpha+\delta-1}\partial_{\Nua} v_1(x,t - \tau) \ d\tau \Big]
+ \mathbb E \Big[ \int_0^t g_2(\tau) I_t^{\alpha+\delta-1}\partial_{\Nua} v_2(x, t - \tau) \ dB( \tau) \Big] \\
= & \int_0^t g_1( \tau) I_t^{\alpha+\delta-1}\partial_{\Nua} v_1(x,t - \tau) \ d\tau.
\end{align*}
Here in the last equality, we used the following property of the Ito integral, 
$$
\mathbb E \Big[ \int_0^t g_2(\tau) I_t^{\alpha+\delta-1}\partial_{\Nua} v_2(x, t - \tau) \ d B( \tau) \Big] =0.
$$
Therefore, the assertion \eqref{eq-v1} is true.  Now taking $t$-derivative on both sides of the equation \eqref{eq-v1}, we arrive at
$$
g_1(0) I_t^{\alpha+\delta-1} \partial_{\Nua} v_1(x,t) + \int_0^t g_1'(t - \tau) I_\tau^{\alpha+\delta-1}\partial_{\Nua} v_1(x, \tau) \ d\tau = 0,\quad (x,t)\in\Gamma\times(0,T).
$$
Moreover, since $g_1(0) \ne 0$, we conclude from Gronwall's inequality that
\begin{equation*}\label{eq-I_tv_1}
I_t^{\alpha+\delta-1} \partial_{\Nua} v_1(x,t) = 0, \quad (x,t)\in\Gamma\times(0,T).
\end{equation*}
We claim that $\partial_{\Nua} v_1(x,t)$ must be vanished on $\Gamma\times(0,T)$. In fact, in the case of $\alpha+\delta - 1\ge0$, by noting the semigroup property of Riemann-Liouville integral,   $I_t^{\alpha_1}I_t^{\alpha_2} = I_t^{\alpha_1+\alpha_2}$, we see that 
\begin{align*}
\frac{d}{dt} I_t^{2-\alpha-\delta} I_t^{\alpha+\delta-1} \partial_{\Nua} v_1(x,t) = \partial_{\Nua} v_1(x,t) = 0, \quad (x,t)\in\Gamma\times(0,T).
\end{align*}
For the case of $\alpha+\delta-1<0$, the assertion $\partial_{\Nua} v_1(x,t) = 0$ for $(x,t)\in\Gamma\times(0,T)$ can be proved analogously. 

 Collecting all the above results, we have the following model for $v_1$,  
\begin{equation*}
 \begin{cases}
 \begin{aligned}
  \partial_t^\alpha v_1(x,t) - \Delta v_1(x,t)&= 0, &&\mbox{in }D\times(0,T),\\
v_1(x,0) &= f_1, &&\mbox{in }D,\\
\partial_t v_1(x,0)&=0, &&\mbox{in }D, \mbox{ if } \alpha\in(1,2),\\
v_1 |_{\partial D} = 0,\ \partial_{\Nua} v_1 |_{\Gamma} &= 0, && \mbox{for }t\in(0,T).
 \end{aligned}
\end{cases}
\end{equation*}
From the unique continuation of the fractional diffusion-wave equation, e.g., Theorem 4.2 in Sakamoto and Yamamoto \cite{sakamoto2011initial} and Lemma 2.4 in Liu, Hu and Yamamoto \cite{liuhuyamamoto2020}, we have $v_1=0$ in $D\times(0,T)$, which leads to $f_1= 0$ in $D$.

Step 2. determine $f_2(x)$. We firstly discuss the case of $\alpha+\delta-1\ge0$. Noting that  $f_1\equiv0$ and $\mathrm{Var} [ \partial_{\Nua}u (x,t,\o)] = 0$ on $\Gamma\times(0,T)$, we see that 
\begin{align*}
\mathrm{Var} \big[ \partial_{\Nua}  u(x,t,\o) \big]
= &   \mathbb E \Big[\big(  \partial_{\Nua} u(x,t,\cdot) - \mathbb E[ \partial_{\Nua}  u(x,t,\cdot) ] \big)^2 \Big] \\
=  & \mathbb E \Big[ \big( \int_0^t g_2( \tau) I_t^{\alpha+\delta-1}\partial_{\Nua} v_2(x, t - \tau) \ d B( \tau)\big)^2 \Big] \\
= & \int_0^t g_2^2( \tau) \big| I_t^{\alpha+\delta-1} \partial_{\Nua} v_2(x, t - \tau) \big|^2 \ d\tau = 0
\end{align*}
for $(x,t)\in \Gamma\times(0,T)$, where we use the Ito formula in Lemma \ref{Ito} in the last equality. Therefore by taking $t$-derivative on both sides of the above equality, we analogously get
\begin{align*}
g_2^2 ( 0) \big|I_t^{\alpha+\delta-1}\partial_{\Nua} v_2(x,t) \big|^2+\int_0^t \big[\frac{ d}{ dt} g_2^2( t - \tau)\big]\  \Big[ I_t^{\alpha+\delta-1}\partial_{\Nua} v_2(x,\tau) \Big]^2 \  d\tau = 0 \text{ on }\Gamma \times (0, T).
\end{align*}
Since $g_2(0) \ne 0$, we can employ the Gronwall's inequality to derive that $I_t^{\alpha+\delta-1}\partial_{\Nua} v_2$ is vanished on $\Gamma\times(0,T)$. Consequently, we see that $\partial_{\Nua} v_2 = 0$ on $\Gamma\times(0,T)$ in view of the fact that $\frac{d}{dt} I_t^{2-\alpha-\delta} I_t^{\alpha+\delta-1}$ is the identity operator. Again, by the use of the unique continuation principle for the fractional diffusion-wave equation, we see that $ v_2 = 0$ in $D \times (0,T)$, which immediately implies $f_2 = 0$ in $D$. 

The unique determination of the source $f_2$ in the case of $\alpha+\delta-1<0$ can be analyzed similarly. We therefore finish the proof of the theorem.
\end{proof}

\section{Concluding remark.}\label{sec-con}
In this paper, we discuss the initial-boundary value problem for the stochastic time-fractional diffusion-wave equations. Firstly, the definition of the stochastic weak solution of the forward problem is given, and then the well-posedness of the forward problem is established by using the Fourier expansion argument and the Ito isometry formula. As an application, we consider the inverse problem for determining the multi-factor sources from the moments of the partial boundary measurements. 
The uniqueness of the inverse random source problem is proved by using the unique continuation principle for the fractional diffusion-wave equations.


Next we list two aspects of the future plan. For the forward problems of equation \eqref{equ-gov}, the proof in this work heavily relies on the settings of the source term $F$. The integral operator $I_t^\delta$, $\delta>0$ lifts the regularity of the source term in some sense so that one can get the regularity estimates of the solutions in the usual Lebesgue and Sobolev spaces as in Sakamoto and Yamamoto \cite{Sakamoto2013Inverse}.  
In the case of $\delta=0$, one cannot directly follow the techniques in this work to prove the well-posedness. The theory with $\delta=0$ is one of our future works. For the inverse problems, we see that Theorem \ref{thm-isp} requires the homogeneous Dirichlet boundary condition on the whole boundary. It would be interesting to investigate what will happen if only the lateral Cauchy data on the partial boundary is used. Moreover, in the very recent paper \cite{KST}, the authors considered the inverse problem of identifying a source or an initial state in a       time-fractional diffusion equation from a single boundary measurement on time interval $(0,\infty)$, and the logarithmic stability estimates were established by the use of Laplace inversion techniques and the unique continuation quantification for the resolvent of fractional diffusion operator as a function of the frequency in the complex plane. It will be challenging to investigate the stability of the inverse source problems for the stochastic fractional diffusion-wave equations under the finite time measurements.

\section*{Acknowledgment.}
Matti Lassas is partially supported by Academy of Finland, projects 312339 and 320113. 
Zhiyuan Li thanks National Natural Science Foundation of China (Grant No. 12271277) and Fundamental Research Funds of Ningbo University (Grant No. ZX2022000223). Zhidong Zhang is
supported by National Natural Science Foundation of China (Grant No. 12101627), and the
Fundamental Research Funds for the Central Universities, Sun Yat-sen University (Grant No.
22qntd2901).

\bibliographystyle{abbrv}
\bibliography{sec_ref}

\end{document}